\newcommand{\vv}{{\mathbf{v}}}
\newcommand{\cB}{{\mathcal{B}}}
\newcommand{\cC}{{\mathcal{C}}}
\newcommand{\cE}{{\mathcal{E}}}
\newcommand{\cF}{{\mathcal{F}}}
\newcommand{\cG}{{\mathcal{G}}}
\newcommand{\cS}{{\mathcal{S}}}
\newcommand{\EE}{\mathbb{E}}
\newcommand{\RR}{\mathbb{R}}
\newcommand{\ch}{{\mathbbm{1}}}
\newcommand{\cht}{{\mathbbm{1}}_{\{i \in \mathcal{S}^t\}}}
\newcommand{\chto}{{\mathbbm{1}}_{\{i \in \mathcal{S}^{t-1}\}}}
\newcommand{\sumi}{\sum\limits_{i=1}^n}
\newcommand{\sumk}{\sum\limits_{k=0}^{K-1}}
\newcommand{\sumik}{\sum\limits_{i,k}}
\newcommand{\sumpit}{\sum\limits_{i \in \cS^t}}
\newcommand{\avgk}{\frac{1}{K}\sum\limits_{k=0}^{K-1}}
\newcommand{\avgi}{\frac{1}{n}\sum\limits_{i=1}^n}
\newcommand{\avgj}{\frac{1}{n}\sum\limits_{j=1}^n}
\newcommand{\avgt}{\frac{1}{T}\sum\limits_{t=0}^{T-1}}
\newcommand{\avgik}{\frac{1}{nK}\sum\limits_{i,k}}
\newcommand{\du}{\nabla_u}
\newcommand{\dv}{\nabla_v}
\let\oldlangle\langle
\let\oldrangle\rangle
\renewcommand{\langle}{\left\oldlangle}
\renewcommand{\rangle}{\right\oldrangle}
\newtheorem{assumption}[theorem]{Assumption}
\definecolor{darkgreen}{rgb}{0,0.7,0}
\crefname{hypothesis}{Hypothesis}{Hypotheses}
\begin{document}

\headers{Sharper Convergence Guarantees for Federated Learning}{Y. Chen, L. Cao, K. Yuan, and Z. Wen}

\title{Sharper Convergence Guarantees for Federated Learning with Partial Model Personalization}

\author{Yiming Chen\thanks{Beijing International
Center for Mathematical Research, Peking University, Beijing, China 
(\email{abcdcym@stu.pku.edu.cn},  \email{caoliyuan@bicmr.pku.edu.cn}, \email{wenzw@pku.edu.cn}).}
\and Liyuan Cao$^*$
\and Kun Yuan\thanks{Corresponding author. Center for Machine Learning Research, Peking Univeristy, Beijing, China (kunyuan@pku.edu.cn).  } \and Zaiwen Wen$^*$
}

\maketitle

\begin{abstract}
Partial model personalization, which encompasses both shared and personal variables in its formulation, is a critical optimization problem in federated learning. It balances individual client needs with collective knowledge utilization, and serves as a general formulation covering various key scenarios, ranging from fully shared to fully personalized federated learning. This paper introduces two effective algorithms, FedAvg-P and Scaffold-P, to solve this problem and provides sharp convergence analyses, quantifying the influence of gradient variance, local steps, and partial client sampling on their performance. Our established rates surpass existing results and, meanwhile, are based on more relaxed assumptions. Additionally, our analyses are also applicable to fully shared or fully personalized federated learning, matching or even outperforming their best known convergence rates. Numerical experiments corroborate our theoretical findings. 
\end{abstract}

\begin{keywords}
federated learning, non-convex optimization, stochastic optimization
\end{keywords}

\begin{AMS}
  68Q25, 90C15, 90C26
\end{AMS}

\section{Introduction}\label{sec-introduction}
Federated learning (FL) \cite{konevcny2016federated,mcmahan2017communication,kairouz2021advances} is a powerful paradigm for distributed machine learning that enables collaborative model training across multiple clients while preserving data privacy by avoiding transferring raw data to a central server. 
Conventionally, FL seeks to enhance a global shared model by aggregating local model updates from each client. However, this traditional approach faces challenges with the intrinsic heterogeneity in data distributions across different clients, rendering it difficult to find one single shared model that caters to the personalized needs of every client.
 
Personalized federated learning (PFL) \cite{li2021ditto,mushtaq2021spider,t2020personalized,fallah2020personalized,pillutla2022federated,collins2021exploiting} has emerged as a promising approach to address this limitation. It extends the traditional FL framework by introducing personalized components, thereby enabling each client to maintain a local model that can be trained according to their specific requirements. This paper  investigates a PFL setting in which $n$ clients collaborate to solve the following partial model personalization problem \cite{pillutla2022federated,collins2021exploiting}:
\begin{align}\label{prob-general}
\min_{u, \vv} \quad f(u, \vv) := \frac{1}{n}\sum_{i=1}^n f_i(u, v_i) \quad \mbox{where} \quad f_i(u, v_i) = \EE_{\xi_i \sim \mathcal{D}_i}[F(u, v_i; \xi_i)].
\end{align}
This optimization problem involves two groups of variables: the shared variable $u \in \mathbb{R}^{d_u}$, which is common to all clients and captures the shared model structure, and the personal variables $\vv = [v_1; \cdots; v_n] \in \mathbb{R}^{n d_v}$, where each $v_i \in \mathbb{R}^{d_v}$ is exclusively maintained by each client $i$ and represents its own personal model component. For instance, in a model proposed in \cite{collins2021exploiting}, variable $u$ denotes a shared data representation across all clients, while $v_i$ indicates the local label head unique to client $i$. 

Partial model personalization problem \eqref{prob-general} is critical to PFL. With both shared and personal variables in its formulation, the solution to this problem caters for the individual requirements of each client while still leveraging the collective knowledge across all clients. 
Additionally, problem \eqref{prob-general} also serves as a general formulation that encompasses various important scenarios in FL. Without all personal variables, this problem simplifies into the fully shared FL formulation with one single global model \cite{konevcny2016federated, mcmahan2017communication, kairouz2021advances} (see problem \eqref{prob-full}). Without the shared variables, it reduces to the fully personalized model \cite{chayti2021linear} (see problem \eqref{prob-full-partial}). Notably, problem \eqref{prob-general} also covers PFL models based on regularizations  \cite{li2021ditto, mushtaq2021spider, t2020personalized} as well as on interpolations \cite{deng2020adaptive}.

Partial model personalization problem \eqref{prob-general} poses several significant challenges to algorithmic development, including limited communication bandwidth between clients and the server, time-varying client participation, heterogeneous data distributions, and a complex optimization structure with shared and personal variables coupled together. To tackle these challenges, FedRep \cite{collins2021exploiting} focuses on a simplified linear regression problem and proposes a collaborative approach, which enables the server and all clients to optimize the shared variable $u$ while allowing each client $i$ to learn its personal variable $v_i$ individually. FedSim and FedAlt \cite{pillutla2022federated}
extend such collaborative approach to the general non-convex optimization problem. In FedSim, both shared and personal variables are updated simultaneously, while in FedAlt, they are updated in a Gauss-Seidel manner. A recent work \cite{huang2022stochastic} studies problem \eqref{prob-general} in a more sophisticated decentralized collaborative setting. 

While the above algorithms have shown empirical success, their convergence analyses are limited in several ways. First, specific algorithms proposed in \cite{collins2021exploiting,pillutla2022federated} rely on the assumption of bounded data heterogeneity. Their convergence analyses become invalid when this assumption is violated. Second, the analyses in  \cite{collins2021exploiting,pillutla2022federated,huang2022stochastic} are not precise enough to fully capture the impact of important factors such as the number of clients, the number of local steps, and the partial client participation strategy on algorithmic performances. 
Third, the analyses in \cite{collins2021exploiting,pillutla2022federated,huang2022stochastic} are not sufficiently general to encompass the best known convergence rates in special scenarios. For instance, removing all personal variables from the analyses in \cite{collins2021exploiting,pillutla2022federated,huang2022stochastic} leads to convergence rates  for the traditional FL problem with a single shared model. Yet, these rates are notably worse than the best-known ones established in \cite{karimireddy2020scaffold,yang2021achieving,yu2019parallel}.

\subsection{Contributions} 
This paper introduces improved algorithms for solving problem~\eqref{prob-general}, and provides sharp convergence guarantees that overcome the limitations discussed above. Specifically, our key contributions are outlined below.
\vspace{1mm}
\begin{itemize}[leftmargin=20pt]
\item 
We adapt FedAvg \cite{konevcny2016federated,mcmahan2017communication} and Scaffold \cite{karimireddy2020scaffold}, two prominent algorithms designed for the traditional FL setting with a single shared model, to address problem \eqref{prob-general} which allows each client to maintain a personalized model. 
This results in two new algorithms, FedAvg-P and Scaffold-P. Notably, Scaffold-P can effectively solve problem \eqref{prob-general} without assuming bounded data heterogeneity across clients.

\vspace{1mm}
\item We present rigorous convergence analyses for both FedAvg-P and Scaffold-P algorithms. Our established convergence rates improve upon existing results for problem \eqref{prob-general}, offering a comprehensive understanding of how gradient variance, the number of clients, the number of local steps, and the partial client participation strategy influence the algorithmic performances.

\vspace{1mm}
\item Our analyses are highly versatile, encompassing or even surpassing existing state-of-the-art convergence rates when problem \eqref{prob-general} is reduced to certain special scenarios. For example, in the fully shared FL formulation with one single global model, our analyses yield convergence rates that is sharper than all existing results. Furthermore, our analyses establish that the FedAvg method, when all clients participate in the optimization procedure, can achieve convergence without relying on any assumption of bounded data heterogeneity. This is a novel finding, since all prior analyses of vanilla FedAvg require the bounded heterogeneity assumption when they do not make any algorithmic adjustments.
\end{itemize}

\subsection{Other related work} 
There has been extensive research on the classical FL model with one single shared variable. References \cite{li2019convergence, qu2021federated} have investigated the convergence properties of FedAvg in convex settings, while \cite{yang2021achieving} has demonstrated that FedAvg can achieve linear speedup in non-convex settings. These works clarified that data heterogeneity across clients can significantly slow down the convergence of FedAvg. In response to this challenge, a study by Huang et al. \cite{huang2023distributed} investigates FedAvg under a relaxed assumption of bounded data heterogeneity in terms of objective function values. More advanced algorithms such as FedProx \cite{qu2021federated}, Scaffold \cite{karimireddy2020scaffold}, and FedADMM \cite{wang2022fedadmm, zhou2023federated} have been proposed to completely remove the influence of unbounded data heterogeneity. A recent work \cite{cheng2023momentum} exploits momentum to address data heterogeneity and accelerates the convergence rate. 

In contrast, the PFL problem with separate shared and personal variables is relatively underexplored.  Several effective algorithms including FedRep \cite{collins2021exploiting}, FedSim \cite{pillutla2022federated}, and FedAlt \cite{pillutla2022federated} have been discussed in the introduction. Additionally, Wei et al. \cite{wei2023personalized} proposed a meta-learning based framework and analyzed its convergence behavior. Moreover, reference \cite{el2022personalized} developed an algorithm with a fast and theoretically tractable communication compression mechanism for PFL.

\subsection{Organization}
The remaining sections of this paper are structured as follows. Section \ref{sec-preliminary} introduces the assumptions and notations that will be used in the subsequent convergence analysis. The main results of our paper, including the FedAvg-P and Scaffold-P algorithms as well as their convergence analyses are presented in Sections \ref{sec-fedavg} and \ref{sec-scaffold}, respectively. In Section \ref{sec-experiment}, we provide numerical results to support and validate our findings.

\section{Preliminaries}\label{sec-preliminary}
In this section, we present the notations, assumptions, and  related preliminaries required for our theoretical results. 

\subsection{Notations and related models} \label{subsec-models}
The symbols we use throughout the paper 
are listed in Table \ref{tab-notation}.
Furthermore, we impose the condition $\gamma_u\eta_u = \gamma_v\eta_v$ in all convergence analyses and denote their value as $\gamma$, i.e., we let $\gamma = \gamma_u\eta_u = \gamma_v\eta_v$.
\begin{table}[!htbp]
\small 
\renewcommand\arraystretch{1}
\caption{Summary of notations used in this paper.}
\label{tab-notation}
\centering
\scalebox{1}{
\begin{tabular}{ p{1.8cm} | p{10cm}}
 \toprule 
 $n$, $m$, and $i$ & total number of clients, sampled number of clients, index of clients\\
 $[n]$ &client set $\{1,2,...,n\}$\\
 $T$, $t$ & number of outer loops, index of outer loops\\
 $K$, $k$ & number of local update steps, index of local update steps\\
 $u^t$, $\vv^t$, $v_i^t$ & shared and personal variables at the beginning of loop $t$ \\
 $u_{i,k}^t$, $v_{i,k}^t$ & shared and personal variables of the $i$th client in loop $t$ and step $k$\\
 $\gamma_u, \eta_u$ & inner and outer step sizes for shared variables $u$\\
 $\gamma_v, \eta_v$ & inner and outer step sizes for personal variables $v_i$\\
 $\cG_u^t, \cG_{\vv}^t, \hat \cG_{\vv}^t$ &expected gradient norms $\EE\|\nabla_u f(u^t,\vv^t)\|^2$, $\frac{1}{n}\sum_{i=1}^n\EE\|\nabla_{v_i} f_i(u^t,v_i^t)\|^2$, and $\frac{m}{n^2}\sum_{i=1}^n\EE\|\nabla_{v_i} f_i(u^t,v_i^t)\|^2$ \\
 $F_0$ &initial function value gap $f(u^0, \vv^0) - \inf_{u,\vv} f(u, \vv)$ \\
 \bottomrule
\end{tabular}
}
\end{table}

As discussed in the introduction, the partial model personalization problem \eqref{prob-general} serves as a general formulation that encompasses various important scenarios. For instance, it 
reduces to the traditional fully shared FL problem with a single common  variable if all personal variables $v_i$ are dropped:
\begin{align}\label{prob-full}
\min_{u} \quad f(u) := \frac{1}{n}\sum_{i=1}^n f_i(u),\quad \mbox{where} \quad f_i(u) = \EE_{\xi_i \sim \mathcal{D}_i}[F(u; \xi_i)].
\end{align}
Additionally, it reduces to the fully PFL problem when dropping shared variable $u$:
\begin{align}\label{prob-full-partial}
\min\limits_{\vv} \quad f(\vv) := \frac{1}{n}\sum_{i=1}^n f_i(v_i),\quad \mbox{where} \quad f_i(v_i) = \EE_{\xi_i \sim \mathcal{D}_i}[F(v_i; \xi_i)].
\end{align}

\subsection{Assumptions}\label{ass}
We make the following assumptions in our analysis. Assumptions \ref{ass-smooth} and \ref{ass-sto} are standard in  first-order stochastic algorithms, while Assumption \ref{ass-hete} constrains the data heterogeneity by bounding the gradient dissimilarity between different clients which is commonly used in \cite{karimireddy2020scaffold,yang2021achieving,pillutla2022federated}. 
\begin{assumption} [\sc smoothness] \label{ass-smooth}
For each $i\in[n]$, the function $f_i(u,v_i)$ is differentiable with respect to both $u$ and $v_i$.  Furthermore, we assume that	
\begin{itemize}[leftmargin=20pt]
		\item $\nabla_u f_i(u,v_i)$ is $L_u$-Lipschitz continuous in terms of $u$, and $L_{uv}$-Lipschitz continuous in terms of $v$.
		\item $\nabla_v f_i(u,v_i)$ is $L_v$-Lipschitz continuous in terms of $v_i$, and $L_{vu}$-Lipschitz continuous in terms of $u$.
\end{itemize}
We further denote $L = \max\{L_u, L_v, L_{uv}, L_{vu}\}$. 
\end{assumption}

\begin{assumption}[\sc Stochastic gradient] \label{ass-sto}
    For each $i\in[n]$, the stochastic gradient is unbiased for any $u \in \RR^{d_u}$ and $\vv \in \RR^{nd_v}$, i.e.,
    $\mathbb{E}[\nabla_u F(u,v_i;\xi_i)] = \nabla_u f_i(u,v_i)$, $
      \mathbb{E}[\nabla_v F(u,v_i;\xi_i)] = \nabla_v f_i(u,v_i) $,
    and the gradient variance is bounded, i.e., 
    \begin{align*}
        \mathbb{E}\|\nabla_u F(u,v_i;\xi_i) - \nabla_u f_i(u,v_i)\|^2  \le \sigma_u^2, \quad 
        \mathbb{E}\|\nabla_v F(u,v_i;\xi_i) - \nabla_v f_i(u,v_i)\|^2 \le \sigma_v^2.
    \end{align*} 
 Moreover, we assume each $\xi_i$ is independent of each other for any $i\in[n]$.
\end{assumption}
\begin{assumption}[\sc Bounded Gradient Dissimilarity, BGD]\label{ass-hete}
There exists a constant ${b} > 0$ such that for any $u \in \RR^{d_u}$ and $\vv \in \RR^{nd_v}$ 
 \begin{align}
	 \frac{1}{n}\sum_{i=1}^n \|\du f_i(u,v_i)\|^2 \le b^2 + \| \du f(u,\vv)\|^2.
\end{align}
\end{assumption}

\section{FedAvg-P Algorithm} \label{sec-fedavg} 
In this section, we present the personalized FedAvg (FedAvg-P) algorithm to solve problem  \eqref{prob-general} and provide its theoretical analysis.

\subsection{Algorithm development} The FedAvg-P algorithm is listed in Algorithm \ref{alg-fedavg}. FedAvg-P supports partial client participation, where only a subset $\mathcal{S}^t$ of clients is sampled to update their variables per outer loop (line 2). When $\mathcal{S}^t = [n]$, all clients participate in FedAvg-P updates. Each client $i$ maintains its personal variable $v_i$ and a local copy $u_i$ of the shared variable. In outer loop $t$ of FedAvg-P, each sampled client $i\in\mathcal{S}^t$ first retrieves the shared variable $u^t$ from the server (line 4), performs $K$ consecutive stochastic gradient descent steps to update $u_i$ and $v_i$ simultaneously (lines 6 and 7), and transmits the updated $u_{i,K}$ back so that the server can merge all local copies to obtain a new global shared variable $u^{t+1}$ (line 11). Compared to the standard FedAvg algorithm, FedAvg-P introduces additional updates to the personal variable $v$. Since $v$ is not shared, there is no need to communicate $v$ between clients.

\begin{algorithm}[thbp]
 \hspace*{\algorithmicindent} \small{\textbf{Input:}\ initialization $u^0, \vv^0$}. \\
 \hspace*{\algorithmicindent} 
 \small{\textbf{Output:}\ solution $u^T, \vv^T$.}
\begin{algorithmic}[1]
\For{each round $t=0,1,...,T-1$}
\vspace{1mm}
\State \small{Sample clients $\cS^t$ uniformly randomly so that $|\cS^t| = m$}. \Comment{\footnotesize{Partial Client {Participation}}}
\For{\small{all clients $i \in \cS^t$ \textbf{in parallel}}} 
\State Initialize $u_{i,0}^t \leftarrow u^t$, $v_{i,0}^t \leftarrow v_i^t$. \Comment{\footnotesize Communication}
\For{$k=0,1,...,K-1$}
\vspace{1mm}
\State $u_{i,k+1}^t \leftarrow u_{i,k}^t-\gamma_u\nabla_u F(u_{i,k}^t, v_{i,k}^t, \xi_{i,k}^t)$. \Comment{\footnotesize Local updates for $u$}
\State $v_{i,k+1}^t \leftarrow v_{i,k}^t-\gamma_v\nabla_v F(u_{i,k}^t, v_{i,k}^t, \xi_{i,k}^t)$. \Comment{\footnotesize Local updates for $\vv$}
\EndFor
\State $u_i^{t+1} \leftarrow u_{i,K}^t$, $v_i^{t+1} \leftarrow (1-\eta_v)v_i^t + \eta_v v_{i,K}^t$.
\EndFor
\State $u^{t+1} \leftarrow (1-\eta_u)u^t + \frac{\eta_u}{m}\sumpit u_i^{t+1}$. \Comment{\footnotesize Communication}
\EndFor
\caption{FedAvg-P Algorithm}\label{alg-fedavg}
\end{algorithmic}
\end{algorithm}

FedAvg-P can be tailored to match existing algorithms by adjusting specific parameters. For instance, eliminating the partial variables $v_i$ transforms the proposed algorithm into the FedAvg algorithm for traditional FL problem \eqref{prob-full} with a single shared variable. Conversely, removing the shared variable $u$ {and setting $K=1, m=n$} turns the algorithm into parallel stochastic gradient descent (SGD) for solving the fully personalized FL problem \eqref{prob-full-partial}. Notably, setting the outer step size parameters $\eta_u$ and $\eta_v$ to 1 in Algorithm \ref{alg-fedavg} aligns it with the FedSim algorithm \cite{pillutla2022federated}. These observations demonstrate the flexibility of FedAvg-P. Therefore, a convergence analysis for FedAvg-P is applicable to various existing approaches and problem formulations. 

\subsection{Convergence analysis with partial client participation}\label{sec-fedavg-result} 
This subsection will establish the convergence properties of FedAvg-P when {only} a subset of clients updates their variables per outer loop. The subsequent subsection will demonstrate improved convergence rates when all $n$ clients participate in the updates.

\subsubsection{Supporting lemmas} To accommodate partial client participation, we introduce virtual sequences $\hat u_i^t$ and $\hat v_i^t$ to simplify the analysis. For all $t,k$ and $i \in [n]$, we define  
\begin{align*}
	\hat u_{i,k+1}^t := \hat u_{i,k}^t - \gamma_u \nabla_u F(\hat u_{i,k}^t, \hat v_{i,k}^t, \xi_{i,k}^t) \text{ and }
	\hat v_{i,k+1}^t := \hat v_{i,k}^t - \gamma_v \nabla_v F(\hat u_{i,k}^t, \hat v_{i,k}^t, \xi_{i,k}^t), 
\end{align*}
where $\hat u_{i,0}^t$ and $\hat v_{i,0}^t$ are initialized as $u^t$ and $v_i^t$, respectively, at the beginning of each outer loop. In other words, we assume that the virtual sequences of all clients $i$ experience local updates, regardless of whether $i \in \cS^t$ or not. The equality $\hat u_{i,k}^t = u_{i,k}^t$ and $\hat v_{i,k}^t = v_{i,k}^t$ hold for $i\in \cS^t$ but not for $i \not\in \cS^t$. To facilitate convergence analysis, we  further introduce the following notations:
\begin{equation}\label{eqa-T}
	\begin{aligned}
		T_{1,u} &:= -\frac{\gamma}{n}\sumik\EE\langle \nabla_u f(u^t, \vv^t), \du f_i(\hat u_{i,k}^t, \hat v_{i,k}^t)\rangle,\\
		T_{1,\vv} &:= -\frac{\gamma m}{n^2}\sumik\EE\langle \nabla_v f_i(u^t, v_i^t), \dv f_i(\hat u_{i,k}^t, \hat v_{i,k}^t)\rangle, \\
		T_{2,u} &:= \frac{\gamma^2}{n^2}\EE\bigg\|\sumik\du f_i(\hat u_{i,k}^t, \hat v_{i,k}^t)\bigg\|^2\hspace{-1.5mm},\ T_{2,\vv} := \frac{\gamma^2m}{n^2}\sumi\EE\bigg\|\sumk\dv f_i(\hat u_{i,k}^t, \hat v_{i,k}^t)\bigg\|^2\hspace{-1.5mm},\\
		T_{3,u} &:= \sumi \EE\bigg\|\sumk \bigg(\du f_i(\hat u_{i,k}^t, \hat v_{i,k}^t) - \frac{1}{n}\sum_{j=1}^n \du f_j(\hat u_{j,k}^t, \hat v_{j,k}^t) \bigg)\bigg\|^2.
	\end{aligned}
\end{equation}
where variables $u, \vv$ and constants $m$ and $n$ are defined in Table \ref{tab-notation}, and {$\gamma = \gamma_u\eta_u = \gamma_v\eta_v$}.

The following lemma bounds the change in function value across consecutive outer loops for FedAvg-P with partial client participation. The proof can be found in Appendix \ref{sec-proof}.
\begin{lemma}[\sc Bounding difference in function value]
\label{lem-fedavg-step}
	Under Assumptions \ref{ass-smooth} and \ref{ass-sto}, it holds that 
	\begin{equation}\label{eqa-fedavg-step}
		\begin{aligned}
			\EE f(u^{t+1},\vv^{t+1}) - \EE f(u^t, \vv^t)
			&\le T_{1,u} + T_{1,\vv} + 2LT_{2,u} + 2LT_{2,\vv}\\
			&\quad + \frac{4L\gamma^2(n-m)}{mn^2}T_{3,u} + \frac{2LK\sigma_u^2\gamma^2}{m} +  \frac{2LK\sigma_v^2\gamma^2m}{n}.
		\end{aligned}
	\end{equation}
	Importantly, if $m=n$, the term containing $T_{3,u}$ vanishes.
\end{lemma}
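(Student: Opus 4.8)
The plan is to begin from the smoothness-based descent inequality for the joint objective and then take expectations over the two independent sources of randomness — the client subsample $\cS^t$ and the stochastic-gradient noise $\{\xi_{i,k}^t\}$ — in that order. First I would write the per-round increment of each iterate explicitly. Using $\gamma=\gamma_u\eta_u=\gamma_v\eta_v$ together with line 11, the shared variable moves by $u^{t+1}-u^t=-\frac{\gamma}{m}\sum_{i\in\cS^t}\sumk\nabla_u F(u_{i,k}^t,v_{i,k}^t,\xi_{i,k}^t)$, while by line 9 each personal variable moves by $v_i^{t+1}-v_i^t=-\gamma\sumk\nabla_v F(u_{i,k}^t,v_{i,k}^t,\xi_{i,k}^t)$ for $i\in\cS^t$ and $v_i^{t+1}-v_i^t=0$ otherwise. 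Applying the descent inequality implied by Assumption \ref{ass-smooth} block-wise in $u$ and $\vv$ (with the cross terms coming from $L_{uv},L_{vu}$ absorbed via Young's inequality and $L=\max\{L_u,L_v,L_{uv},L_{vu}\}$) produces a linear term $\langle\nabla_u f(u^t,\vv^t),u^{t+1}-u^t\rangle+\sumi\langle\nabla_{v_i}f(u^t,\vv^t),v_i^{t+1}-v_i^t\rangle$ plus a quadratic remainder controlled by $\|u^{t+1}-u^t\|^2$ and $\sumi\|v_i^{t+1}-v_i^t\|^2$.

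For the linear terms I would use two facts about a uniform $m$-subsample: $\EE_{\cS^t}[\frac1m\sum_{i\in\cS^t}a_i]=\frac1n\sumi a_i$, and each index lies in $\cS^t$ with probability $m/n$. Combined with the identity $\hat u_{i,k}^t=u_{i,k}^t,\ \hat v_{i,k}^t=v_{i,k}^t$ for $i\in\cS^t$, this lets me replace the participating iterates by the virtual sequences, which are defined for every $i$. Taking the conditional expectation over $\xi_{i,k}^t$ and invoking unbiasedness from Assumption \ref{ass-sto} then replaces each stochastic gradient by its population counterpart. The shared part collapses to $-\frac{\gamma}{n}\sumik\EE\langle\nabla_u f(u^t,\vv^t),\du f_i(\hat u_{i,k}^t,\hat v_{i,k}^t)\rangle=T_{1,u}$, and, using $\nabla_{v_i}f=\frac1n\nabla_{v_i}f_i$ and the per-client inclusion probability $m/n$, the personal part collapses to $T_{1,\vv}$.

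The quadratic terms are where the two noise sources must be disentangled. I would split each stochastic gradient into its mean plus a mean-zero increment; since the $\xi_{i,k}^t$ are independent and each virtual iterate is measurable with respect to the past, the cross terms vanish in expectation and the noise contributes only its variance, i.e.\ $mK$ copies of $\sigma_u^2$ (resp.\ $\sigma_v^2$), yielding the $\frac{2LK\sigma_u^2\gamma^2}{m}$ and $\frac{2LK\sigma_v^2\gamma^2 m}{n}$ terms. The remaining deterministic-gradient part is then averaged over the sampling. For the personal block the norm $\sumi\|v_i^{t+1}-v_i^t\|^2$ decouples across clients, so sampling only supplies the factor $m/n$ and yields $T_{2,\vv}$ with no covariance correction. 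For the shared block the subsampled clients sit inside a single squared norm, so I would invoke the sampling-without-replacement identity $\EE_{\cS^t}\|\sum_{i\in\cS^t}H_i\|^2=\frac{m^2}{n^2}\|\sumi H_i\|^2+\frac{m(n-m)}{n(n-1)}\sumi\|H_i-\bar H\|^2$ with $H_i=\sumk\du f_i(\hat u_{i,k}^t,\hat v_{i,k}^t)$ and $\bar H=\frac1n\sumi H_i$: the mean part gives $T_{2,u}$ and the variance part gives the $T_{3,u}$ term, whose factor $(n-m)$ makes it vanish exactly when $m=n$ (where $\cS^t=[n]$ is deterministic). Bounding $\frac{1}{n-1}\le\frac{2}{n}$ and folding the cross-term constants into $2L$ and $4L$ recovers the stated coefficients.

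The main obstacle I anticipate is the careful bookkeeping of the two independent randomness sources in the quadratic terms: setting up the filtration so that the gradient-noise cross terms genuinely cancel, and deriving the without-replacement variance identity so that the cross-client covariance produces precisely $T_{3,u}$ with its $(n-m)$ scaling — all while tracking the asymmetric $1/n$ and $m/n$ factors that the personal-gradient structure $\nabla_{v_i}f=\frac1n\nabla_{v_i}f_i$ and partial participation inject into the $u$- and $\vv$-blocks.
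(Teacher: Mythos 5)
Your proposal is correct and follows essentially the same route as the paper: the block-wise smoothness descent inequality (the paper's Lemma \ref{lem-step}), replacement of participating iterates by the virtual sequences, a martingale-difference decomposition of the stochastic-gradient noise giving the $\sigma_u^2,\sigma_v^2$ terms, and the without-replacement sampling identity (the paper's Lemma \ref{lem-id}, with $\frac{1}{n-1}\le\frac{2}{n}$) producing $T_{2,u}$ plus the $(n-m)$-weighted $T_{3,u}$ correction. The only cosmetic difference is that you state the sampling identity in unnormalized form, which is equivalent to the paper's version after dividing by $m^2$.
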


Next we provide a series of lemmas to bound terms $T_{1,u}$, $T_{1,\vv}$, $T_{2,u}$, $T_{2,\vv}$ and $T_{3,u}$ respectively. To this end, we need to introduce $\cE^t := \sum_{i,k} \EE(\|\hat u_{i,k}^t-u^t\|^2 + \|\hat v_{i,k}^t - v_i^t\|^2)$ to quantify the deviation resulted from the local updates, where $\sum_{i,k}$ denotes the summation over all values of $i$ in the set $[n]$ and all values of $k$ in the set $\{0,1,...,K-1\}$. The following lemma provides an upper bound on $T_{1,u}$ and $T_{1,\vv}$.

\begin{lemma}[\sc bounding $T_{1,u}, T_{1,\vv}$]\label{lem-T1uv}
	Under Assumptions \ref{ass-smooth}, it holds that 
	\begin{align*}
		T_{1,u} \le -\frac{K\gamma}{2}\cG_u^t + \frac{L^2\gamma}{n}\cE^t \quad\text{and}\quad T_{1,\vv} \le -\frac{K\gamma m}{2n}\cG_{\vv}^t + \frac{L^2\gamma m}{n^2}\cE^t.
	\end{align*}
 where $\cG_u^t$ and $\cG_{\vv}^t$ are defined in Table \ref{tab-notation}. 
\end{lemma}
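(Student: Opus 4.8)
The plan is to handle $T_{1,u}$ and $T_{1,\vv}$ separately, but with the same two-step mechanism: in each inner product I split off a ``noiseless'' part evaluated at the outer-loop iterate $(u^t,v_i^t)$ and a ``drift'' part capturing the deviation of the virtual local iterates $(\hat u_{i,k}^t,\hat v_{i,k}^t)$, then control the drift by Young's inequality and the smoothness of Assumption \ref{ass-smooth}.

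For $T_{1,u}$ I would first invoke the averaging identity $\du f(u^t,\vv^t)=\tfrac1n\sum_i\du f_i(u^t,v_i^t)$, which follows directly from $f=\tfrac1n\sum_i f_i$. Writing $\du f_i(\hat u_{i,k}^t,\hat v_{i,k}^t)=\du f_i(u^t,v_i^t)+\big(\du f_i(\hat u_{i,k}^t,\hat v_{i,k}^t)-\du f_i(u^t,v_i^t)\big)$ inside the bracket, the first piece sums over $i$ (and trivially over $k$, producing a factor $K$) to exactly $-K\gamma\,\EE\|\du f(u^t,\vv^t)\|^2=-K\gamma\,\cG_u^t$. To the drift piece I apply $-\langle a,b\rangle\le\tfrac12\|a\|^2+\tfrac12\|b\|^2$ with $a=\du f(u^t,\vv^t)$; the $\tfrac12\|a\|^2$ contributions sum to $+\tfrac{K\gamma}{2}\cG_u^t$, which halves the negative term to the desired $-\tfrac{K\gamma}{2}\cG_u^t$. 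The leftover $\tfrac12\|b\|^2$ terms are then bounded by smoothness: inserting $\du f_i(u^t,\hat v_{i,k}^t)$, the triangle inequality together with $L_u,L_{uv}\le L$ gives $\|\du f_i(\hat u_{i,k}^t,\hat v_{i,k}^t)-\du f_i(u^t,v_i^t)\|\le L(\|\hat u_{i,k}^t-u^t\|+\|\hat v_{i,k}^t-v_i^t\|)$, and $(a+b)^2\le 2a^2+2b^2$ turns this into $2L^2(\|\hat u_{i,k}^t-u^t\|^2+\|\hat v_{i,k}^t-v_i^t\|^2)$. Summing over $i,k$ and matching the definition of $\cE^t$ produces the $\tfrac{L^2\gamma}{n}\cE^t$ term.

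For $T_{1,\vv}$ the argument is identical but slightly simpler, since both gradients share the same client index $i$ and no averaging identity is required; I would merely add and subtract $\dv f_i(u^t,v_i^t)$. The noiseless part sums to $-\tfrac{K\gamma m}{n}\cG_{\vv}^t$ (using $\sum_i\EE\|\dv f_i(u^t,v_i^t)\|^2=n\,\cG_{\vv}^t$), Young's inequality halves it to $-\tfrac{K\gamma m}{2n}\cG_{\vv}^t$, and the same Lipschitz estimate with $L_v,L_{vu}\le L$ yields $\tfrac{L^2\gamma m}{n^2}\cE^t$.

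I do not expect a genuine analytical obstacle here: the whole argument reduces to the averaging identity, one application of Young's inequality, and the per-coordinate Lipschitz bounds. The only thing requiring care is the constant bookkeeping. The split must reproduce the \emph{full} coefficient $-K\gamma$ (respectively $-\tfrac{K\gamma m}{n}$) so that Young's inequality leaves precisely one half, and the prefactors $\tfrac{\gamma}{n}$ and $\tfrac{\gamma m}{n^2}$ must be carried consistently through every $\sum_{i,k}$ so the drift terms land exactly on $\tfrac{L^2\gamma}{n}\cE^t$ and $\tfrac{L^2\gamma m}{n^2}\cE^t$.
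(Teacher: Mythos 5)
Your proposal is correct and follows essentially the same route as the paper: split off the gradient at $(u^t,v_i^t)$ using the averaging identity $\du f(u^t,\vv^t)=\tfrac1n\sum_i\du f_i(u^t,v_i^t)$, apply Young's inequality to halve the negative term, and bound the drift via the Lipschitz constants from Assumption \ref{ass-smooth} together with $(a+b)^2\le 2a^2+2b^2$. The constants all check out, so this matches the paper's argument in both structure and bookkeeping.
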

\begin{proof}
	We only prove the upper bound of $T_{1,u}$. A similar approach can be employed to obtain an upper bound for $T_{1, \vv}$. 
	First notice 
	\begin{align*}
		T_{1,u} 
		&= -\frac{\gamma}{n}\sumik\EE  \langle \nabla_u f(u^t,\vv^t), \du f_i(\hat u_{i,k}^t,\hat v_{i,k}^t) -\du f(u^t,\vv^t)+\du f(u^t,\vv^t)\rangle\\
		&= -K\gamma\EE\|\du f(u^t,\vv^t)\|^2 -  \frac{\gamma}{n}\sumik\EE \langle \nabla_u f(u^t,\vv^t), \du f_i(\hat u_{i,k}^t,\hspace{-0.7mm}\hat v_{i,k}^t) - \du f_i(u^t, v_i^t)\rangle, 
	\end{align*}
	where the final equation holds since $\du f(u^t, v_i^t) = \avgi\du f_i(u^t, v_i^t)$. Then we have
	\begin{align*}
		T_{1,u}	&\le -\frac{K\gamma}{2}\EE\|\du f(u^t,\vv^t)\|^2  + \frac{\gamma}{2n}\sumik\EE\|\du f_i(\hat u_{i,k}^t,\hat v_{i,k}^t)-\du f_i(u^t, v_i^t)\|^2\\
		&\le -\frac{K\gamma}{2}\EE\|\du f(u^t,\vv^t)\|^2 + \frac{L^2\gamma}{n}\sumik\EE(\|\hat u_{i,k}^t-u^t\|^2 + \|\hat v_{i,k}^t-v^t\|^2), 
	\end{align*}
	where the first inequality is due to $\langle a,b \rangle \le \frac{1}{2}\|a\|^2 + \frac{1}{2}\|b\|^2$, while the second is a result of  Assumption \ref{ass-smooth} and the mean inequality.
\end{proof}

Similarly,  the following lemma will bound $T_{2,u}$ and $T_{2,\vv}$.
\begin{lemma}[\sc bounding $T_{2,u}, T_{2, \vv}$]\label{lem-T2uv}
	Under Assumptions \ref{ass-smooth}, it holds that 
	\begin{align*}
		T_{2,u} \le 2K^2\gamma^2\cG_u^t + \frac{4L^2K\gamma^2}{n}\cE^t \quad\text{and}\quad T_{2,\vv} \le \frac{2K^2\gamma^2m}{n}\cG_{\vv}^t + \frac{4L^2K\gamma^2m}{n^2}\cE^t.
	\end{align*}
\end{lemma}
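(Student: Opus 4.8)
The plan is to mirror the proof of Lemma \ref{lem-T1uv}: inside each squared norm I would split the drifted gradient $\du f_i(\hat u_{i,k}^t, \hat v_{i,k}^t)$ (and its $v$-analogue) as the base gradient $\du f_i(u^t, v_i^t)$ evaluated at the outer-loop anchor plus a deviation, bound the base part exactly through the gradient norms $\cG_u^t, \cG_{\vv}^t$, and bound the deviation through smoothness and $\cE^t$. As in the source, I would carry out $T_{2,u}$ in full and note that $T_{2,\vv}$ follows the same recipe.

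For $T_{2,u}$, after inserting $\pm\du f_i(u^t,v_i^t)$ in the norm I apply $\|a+b\|^2 \le 2\|a\|^2 + 2\|b\|^2$ to separate a base summand from a deviation summand. The base summand collapses via the identity $\sumik \du f_i(u^t,v_i^t) = Kn\,\du f(u^t,\vv^t)$ (using $\du f = \avgi \du f_i$), so its contribution is exactly $\frac{\gamma^2}{n^2}\cdot 2K^2 n^2 \cG_u^t = 2K^2\gamma^2\cG_u^t$. For the deviation summand I would first use Cauchy--Schwarz across the $nK$ indices, $\|\sum_{i,k} a_{i,k}\|^2 \le nK\sum_{i,k}\|a_{i,k}\|^2$, and then invoke Assumption \ref{ass-smooth}: since $\du f_i$ is $L_u$-Lipschitz in $u$ and $L_{uv}$-Lipschitz in $v$, decomposing the increment across the two coordinates and using $L=\max\{L_u,L_v,L_{uv},L_{vu}\}$ gives $\|\du f_i(\hat u_{i,k}^t,\hat v_{i,k}^t)-\du f_i(u^t,v_i^t)\|^2 \le 2L^2(\|\hat u_{i,k}^t-u^t\|^2+\|\hat v_{i,k}^t-v_i^t\|^2)$. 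Summing this over $i,k$ recovers $\cE^t$ and, after collecting the prefactors $2\cdot\tfrac{\gamma^2}{n^2}\cdot nK\cdot 2L^2$, yields $\frac{4L^2K\gamma^2}{n}\cE^t$.

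The $T_{2,\vv}$ estimate is structurally identical, with three adjustments: the inner norm sums only over $k$, so the Cauchy--Schwarz multiplicity is $K$ rather than $nK$; the base part uses $\sumk \dv f_i(u^t,v_i^t) = K\dv f_i(u^t,v_i^t)$ together with the definition $\cG_{\vv}^t = \avgi\EE\|\dv f_i(u^t,v_i^t)\|^2$ after the outer sum over $i$; and the overall prefactor carries the extra $m/n$. Tracking these gives $\frac{2K^2\gamma^2m}{n}\cG_{\vv}^t + \frac{4L^2K\gamma^2m}{n^2}\cE^t$. The only genuinely delicate part is the constant bookkeeping: keeping the Cauchy--Schwarz multiplicities straight ($nK$ for the shared variable versus $K$ per client for the personal variable), tracking the factor $2$ produced when the joint Lipschitz increment is split into its $u$- and $v$-contributions, and correctly absorbing the $1/n$ normalizations hidden in $\du f$ and in $\cG_{\vv}^t$. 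Everything else is routine.
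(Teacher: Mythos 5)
Your proposal is correct and follows essentially the same route as the paper: insert the anchor gradient, split with $\|a+b\|^2\le 2\|a\|^2+2\|b\|^2$, apply Cauchy--Schwarz over the $nK$ (resp.\ $K$) summands, and bound the deviation by smoothness to recover $\cE^t$; the only cosmetic difference is that you add and subtract $\du f_i(u^t,v_i^t)$ where the paper adds and subtracts $\du f(u^t,\vv^t)$, which coincide after summation. The constant bookkeeping you describe matches the paper's exactly.
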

\begin{proof}
	The upper bound for $T_{2,u}$ can be proved using the mean inequality and Assumption \ref{ass-smooth}: 
	\begin{align*}
		T_{2,u}
		&= \frac{\gamma^2}{n^2}\EE\Big\|\sumik\Big(\du f_i(\hat u_{i,k}^t, \hat v_{i,k}^t)- \du f(u^t,\vv^t) + \du f(u^t,\vv^t)\Big)\Big\|^2\\
		&\le 2K^2\gamma^2\EE\|\du f(u^t,\vv^t)\|^2 + \frac{2K\gamma^2}{n}  \sumik \EE\left\|\du f_i(\hat u_{i,k}^t, \hat v_{i,k}^t) - \du f_i(u^t,v_i^t)\right\|^2\\
		&\le 2K^2\gamma^2\EE\|\du f(u^t,\vv^t)\|^2 + \frac{4L^2K\gamma^2}{n} \sumik\EE\left(\|\hat u_{i,k}^t-u^t\|^2 + \|\hat v_{i,k}^t-v_i^t\|^2\right).
	\end{align*}
	The proof for $T_{2,\vv}$ can be conducted in a similar manner.
\end{proof}

As mentioned in Lemma \ref{lem-fedavg-step}, we have $T_{3,u} = 0$ with full client participation. We only need to bound $T_{3,u}$ in the partial client participation scenario. 
\begin{lemma}[\sc bounding $T_{3,u}$]\label{lem-T3u}
	Under Assumptions \ref{ass-smooth} and \ref{ass-hete}, it holds that
	\begin{align*}
		T_{3,u} \le 3nK^2b^2+ 12L^2K\cE^t.
	\end{align*}
\end{lemma}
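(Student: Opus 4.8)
The plan is to bound $T_{3,u}$, which measures the aggregate discrepancy between each client's accumulated shared-gradient and the averaged shared-gradient over all clients, by separating a client-heterogeneity contribution (controlled by Assumption~\ref{ass-hete}) from a local-drift contribution (controlled by $\cE^t$). The starting point is to insert and subtract the gradients evaluated at the common outer-loop point, writing $\du f_i(\hat u_{i,k}^t, \hat v_{i,k}^t) - \frac{1}{n}\sum_j \du f_j(\hat u_{j,k}^t, \hat v_{j,k}^t)$ as a sum of three differences: the deviation of client $i$'s gradient from its value at $(u^t, v_i^t)$, the centered heterogeneity term $\du f_i(u^t, v_i^t) - \du f(u^t, \vv^t)$, and the averaged deviation $\frac{1}{n}\sum_j\big(\du f_j(u^t, v_j^t) - \du f_j(\hat u_{j,k}^t, \hat v_{j,k}^t)\big)$. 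This decomposition is the natural analogue of the splitting used in Lemmas~\ref{lem-T1uv} and~\ref{lem-T2uv}.

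Next I would apply the mean inequality $\|a+b+c\|^2 \le 3(\|a\|^2 + \|b\|^2 + \|c\|^2)$ to each summand, followed by the Cauchy--Schwarz (or Jensen) bound on the inner sum over $k$, producing a factor of $K$. The first and third groups of terms are Lipschitz deviations: by Assumption~\ref{ass-smooth} each is bounded by $L^2(\|\hat u_{i,k}^t - u^t\|^2 + \|\hat v_{i,k}^t - v_i^t\|^2)$, and after summing over $i$ and $k$ these collapse into multiples of $\cE^t$. Here one must be careful that the averaged third term, once summed over the outer index $i$ from $1$ to $n$, contributes another full copy of the drift sum rather than vanishing, so tracking the combinatorial constants is where the coefficient $12L^2K$ is assembled.

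The middle, heterogeneity term $\frac{1}{n}\sum_i\sum_k \|\du f_i(u^t, v_i^t) - \du f(u^t, \vv^t)\|^2$ is where Assumption~\ref{ass-hete} enters. Noting that the summand is independent of $k$, summation over $k$ yields a factor $K$, and I would expand $\|\du f_i - \du f\|^2 = \|\du f_i\|^2 - 2\langle \du f_i, \du f\rangle + \|\du f\|^2$; averaging over $i$ makes the cross term equal to $-\|\du f\|^2$, so $\frac{1}{n}\sum_i \|\du f_i(u^t, v_i^t) - \du f(u^t, \vv^t)\|^2 = \frac{1}{n}\sum_i\|\du f_i(u^t, v_i^t)\|^2 - \|\du f(u^t, \vv^t)\|^2 \le b^2$ directly by Assumption~\ref{ass-hete}. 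Summing over $i$ and $k$ and multiplying by the $K$ arising from the inner Cauchy--Schwarz then gives the $3nK^2 b^2$ term.

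The main obstacle I anticipate is purely bookkeeping rather than conceptual: keeping the three-way split organized so that the heterogeneity bound is applied to the centered quantity (where the $\du f$ cross term cancels exactly to give $b^2$, not a looser bound), while simultaneously ensuring the two drift groups are both expressible through $\cE^t$ with the correct constant. In particular, the averaged term must be handled before summing over $i$, otherwise one loses the cancellation; and the factor $K$ from the inner Jensen step must be distributed consistently across all three groups to land precisely on the stated constants $3nK^2$ and $12L^2K$.
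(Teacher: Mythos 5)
Your proposal follows essentially the same route as the paper's proof: the identical three-way decomposition (client drift, averaged drift, centered heterogeneity), the mean inequality with factor $3$, Cauchy--Schwarz over $k$ yielding the factor $K$, smoothness to convert both drift groups into $12L^2K\cE^t$, and the exact variance identity $\frac{1}{n}\sum_i\|\du f_i - \du f\|^2 = \frac{1}{n}\sum_i\|\du f_i\|^2 - \|\du f\|^2 \le b^2$ to obtain $3nK^2b^2$. The only minor imprecision is that each Lipschitz deviation carries a factor $2L^2$ rather than $L^2$, but you correctly account for this in the final constant, so the argument is sound.
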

\begin{proof} Using the mean inequality, we obtain
	\begin{align*}
		T_{3,u} 
		&\le 3\sumi \EE\bigg\|\sumk\Big(\du f_i(\hat u_{i,k}^t, \hat v_{i,k}^t) - \du f_i(u^t, v_i^t)\Big)\bigg\|^2\\
		&\quad + 3\sumi \EE\bigg\|\frac{1}{n}\sum_{j=1}^n \sumk\Big( \du f_j(u^t, v_j^t) - \du f_j(\hat u_{j,k}^t, \hat v_{j,k}^t) \Big)\bigg\|^2\\
		&\quad + 3\sumi \EE\bigg\|\sumk\Big( \du f_i(u^t, v_i^t)- \du f(u^t, \vv^t)\Big) \bigg\|^2\\
		&\le 6K\sumik \EE\left\|\du f_i(\hat u_{i,k}^t, \hat v_{i,k}^t) - \du f_i(u^t, v_i^t)\right\|^2\\
		&\quad + 3K^2\sumi \EE\left\|\du f_i(u^t, v_i^t) - \du f(u^t, \vv^t)\right\|^2 \\
		&\le 12L^2K\sumik \EE\left(\|\hat u_{i,k}^t-u^t\|^2 + \|\hat v_{i,k}^t-v^t\|^2\right)+ 3nK^2b^2,
	\end{align*}
	where the final inequality derives from Assumptions \ref{ass-smooth} and \ref{ass-hete}.
\end{proof}

It is observed that the deviation term $\cE^t$ appears in Lemmas \ref{lem-T1uv}--\ref{lem-T3u}. The following lemma provides an upper bound for $\cE^t$.  To this end, we introduce another quantity $\cB^t := \sum_{i=1}^n \mathbb{E} \|\nabla_u f_i(u^t, v_i^t)\|^2$ whose upper bound will be established afterwards.
\begin{lemma}[\sc bounding $\cE^t$]\label{lem-fedavg-e} Under Assumptions \ref{ass-smooth} and \ref{ass-sto}, by setting the step sizes $\gamma_u$ and $\gamma_v$ such that $4L^2K(K-1)(\gamma_u^2+\gamma_v^2) \le 1$, we have
	\begin{align*}
		\cE^t &\le 8K^2(K-1)\gamma_u^2\cB^t + 8nK^3\gamma_v^2\cG_{\vv}^t + 4nK^2\left(\sigma_u^2\gamma_u^2+\sigma_v^2\gamma_v^2\right).
	\end{align*}
\end{lemma}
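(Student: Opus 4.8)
The plan is to bound $\cE^t = \sum_{i,k} \EE(\|\hat u_{i,k}^t - u^t\|^2 + \|\hat v_{i,k}^t - v_i^t\|^2)$ by unrolling the recursion defined by the virtual local updates. First I would fix a client $i$ and a step index $k$, and write the deviation $\hat u_{i,k}^t - u^t = -\gamma_u \sum_{j=0}^{k-1} \nabla_u F(\hat u_{i,j}^t, \hat v_{i,j}^t, \xi_{i,j}^t)$, and similarly for the $v$-deviation with $\gamma_v$. The standard device is to split each stochastic gradient into its mean plus a noise term, apply the inequality $\|\sum_{j=0}^{k-1} a_j\|^2 \le k \sum_{j=0}^{k-1}\|a_j\|^2$ to the deterministic part, and use independence of the $\xi_{i,j}^t$ together with Assumption \ref{ass-sto} to control the variance contribution, which collapses (in expectation) to a sum rather than a squared sum. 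This is where the $\sigma_u^2\gamma_u^2$ and $\sigma_v^2\gamma_v^2$ terms, each carrying a factor roughly $K^2$ after summing over $k$, will emerge.

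Next I would replace each deterministic gradient $\nabla_u f_i(\hat u_{i,j}^t, \hat v_{i,j}^t)$ by $\nabla_u f_i(u^t, v_i^t)$ plus a Lipschitz remainder, using Assumption \ref{ass-smooth}: the remainder is bounded by $L$ times the deviation $\|\hat u_{i,j}^t - u^t\| + \|\hat v_{i,j}^t - v_i^t\|$, so after squaring and summing it reproduces a multiple of $\cE^t$ itself on the right-hand side. The anchor terms $\|\nabla_u f_i(u^t, v_i^t)\|^2$ sum to $\cB^t$, and the corresponding $v$-anchor terms $\frac{1}{n}\sum_i \|\nabla_v f_i(u^t, v_i^t)\|^2$ recombine into $\cG_{\vv}^t$ (explaining the $n$ and $K^3$ factors on the $\cG_{\vv}^t$ term once the sums over $i$ and $k$ are carried out). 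I would carry the two groups of variables in parallel, keeping the $\gamma_u^2$ and $\gamma_v^2$ factors distinct, since the final bound treats them asymmetrically.

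The main obstacle is the self-referential appearance of $\cE^t$: the Lipschitz remainders produce a term of the form $c\,L^2 K (\gamma_u^2+\gamma_v^2)\,\cE^t$ on the right-hand side, so this is an implicit inequality $\cE^t \le (\text{explicit terms}) + \alpha\,\cE^t$ rather than a closed bound. The resolution is precisely the stepsize condition $4L^2 K(K-1)(\gamma_u^2+\gamma_v^2) \le 1$ stated in the lemma: it guarantees $\alpha \le \tfrac12$ (after tracking the combinatorial factors from the $k\sum$ and the cross-term splitting), so I can absorb $\alpha\,\cE^t$ into the left-hand side and double the remaining constants. I would therefore organize the bookkeeping so that the coefficient multiplying $\cE^t$ matches the threshold in the hypothesis, then rearrange to isolate $\cE^t$ and read off the final constants $8K^2(K-1)\gamma_u^2$, $8nK^3\gamma_v^2$, and $4nK^2(\sigma_u^2\gamma_u^2 + \sigma_v^2\gamma_v^2)$. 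The one delicate point to get right is ensuring the factor $(K-1)$ rather than $K$ appears where the recursion starts (since the deviation at $k=0$ is zero), which is what lets the condition be phrased with $K(K-1)$.
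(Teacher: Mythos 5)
Your overall architecture (unroll the deviation as a sum of stochastic gradients, split mean from noise, anchor at $(u^t,v_i^t)$, and absorb the resulting multiple of $\cE^t$) is a legitimate alternative to the paper's argument, but the absorption step does not close under the lemma's stated stepsize condition. Track the constants: writing $\hat u_{i,k}^t-u^t=-\gamma_u\sum_{j<k}\du F(\hat u_{i,j}^t,\hat v_{i,j}^t,\xi_{i,j}^t)$, the cross terms between $\du f_i(\hat u_{i,j}^t,\hat v_{i,j}^t)$ and the noise at an \emph{earlier} step $l<j$ do not vanish (the later deterministic gradient depends on the earlier noise), so separating mean from noise in the unrolled sum costs a factor of $2$; the anchor split $\EE\|\du f_i(\hat u_{i,j}^t,\hat v_{i,j}^t)\|^2\le 2\EE\|\du f_i(u^t,v_i^t)\|^2+4L^2\,\EE(\|\hat u_{i,j}^t-u^t\|^2+\|\hat v_{i,j}^t-v_i^t\|^2)$ costs another factor of $2$. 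Carrying these through your $k\sum_{j<k}$ bookkeeping, the coefficient multiplying $\cE^t$ on the right-hand side comes out to $8L^2(\gamma_u^2+\gamma_v^2)\sum_{k}k\le 4L^2K(K-1)(\gamma_u^2+\gamma_v^2)$, which the hypothesis only bounds by $1$, not by $\tfrac12$; the resulting inequality $\cE^t\le(\text{explicit terms})+\alpha\,\cE^t$ with $\alpha\le1$ is vacuous. The two obvious repairs both cost something: strengthening the stepsize condition by a constant factor changes the lemma's hypothesis (and propagates into Theorems 3.7 and 3.10), while avoiding the mean--noise doubling via $\EE\|\sum_{j<k}\du F\|^2\le k\sum_{j<k}\EE\|\du F\|^2$ inflates the variance contribution to order $nK^3\sigma_u^2\gamma_u^2$, weaker than the claimed $4nK^2\sigma_u^2\gamma_u^2$.

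The paper sidesteps both issues by running a one-step recursion instead of a global unrolling: the variance is peeled off \emph{exactly} at each step (conditionally, $\EE\|\hat u_{i,k}^t-\gamma_u\du F-u^t\|^2\le\EE\|\hat u_{i,k}^t-\gamma_u\du f_i-u^t\|^2+\sigma_u^2\gamma_u^2$, with no factor of $2$), Young's inequality with parameter $K-1$ gives $\cE_{i,k+1}^t\le\bigl(1+\tfrac{1}{K-1}+4L^2K(\gamma_u^2+\gamma_v^2)\bigr)\cE_{i,k}^t+\cdots$, and the stated stepsize condition is then used multiplicatively, to cap the growth factor at $1+\tfrac{2}{K-1}$ so the geometric series is $O(K)$. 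If you want to keep the direct-unrolling structure you must either accept a tighter stepsize condition or reorganize the noise handling along these per-step lines; as written, the claim that the given condition yields $\alpha\le\tfrac12$ is off by a factor of $2$, and the stated constants cannot be recovered.
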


\begin{proof}
	For the case of $K=1$, the proof is straightforward. Now, let us consider the scenario where $K \ge 2$. For $k=0,1,...,K-1$, the following inequalities hold:
	\begin{align*}
		\EE\|\hat u_{i,k+1}^t-u^t\|^2 
		&= \EE\|\hat u_{i,k}^t-\gamma_u\du F(\hat u_{i,k}^t, \hat v_{i,k}^t, \xi_{i,k}^t)- u^t\|^2\\
		&\le \EE\|\hat u_{i,k}^t-\gamma_u\du f_i(\hat u_{i,k}^t, \hat v_{i,k}^t)- u^t\|^2 + \sigma_u^2\gamma_u^2 \\
		&\le \Big(1+\frac{1}{K-1}\Big)\EE\|\hat u_{i,k}^t-u^t\|^2 +K\gamma_u^2\EE\|\du f_i(\hat u_{i,k}^t, \hat v_{i,k}^t)\|^2+\sigma_u^2\gamma_u^2\\
		&\le \Big(1+\frac{1}{K-1}\Big)\EE\|\hat u_{i,k}^t-u^t\|^2  +2K\gamma_u^2\EE\|\du f_i(u^t, v_i^t)\|^2\\
		&\qquad +2K\gamma_u^2\EE\|\du f_i(\hat u_{i,k}^t, \hat v_{i,k}^t) - \du f_i(u^t, v_i^t)\|^2+\sigma_u^2\gamma_u^2 \\
		&\le \Big(1+\frac{1}{K-1}+4L^2K\gamma_u^2\Big)\EE\|\hat u_{i,k}^t-u^t\|^2 + 4L^2K\gamma_u^2 \EE\|\hat v_{i,k}^t-v_i^t\|^2\\
		&\quad +2K\gamma_u^2\EE\|\du f_i(u^t, v_i^t)\|^2+\sigma_u^2\gamma_u^2. 
	\end{align*}
	In the proof above, we first separate the mean and variance and use the triangle inequality. The second inequality follows from the inequality $2\langle a,b \rangle \le \kappa\|a\|^2+\frac{1}{\kappa}\|b\|^2$ and setting $\kappa = K-1$. The final inequality derives from Assumption \ref{ass-smooth}. 
	By employing a similar approach, we can derive the following inequality.
	\begin{align*}
		\EE\|\hat v_{i,k+1}^t-v_i^t\|^2 
		&\le \Big(1+\frac{1}{K-1}+4L^2K\gamma_v^2\Big)\EE\|\hat v_{i,k}^t-v_i^t\|^2 + 4L^2K\gamma_v^2 \EE\|\hat u_{i,k}^t-u_i^t\|^2\\ 
		&\quad+2K\gamma_v^2\EE\|\dv f_i(u^t, v_i^t)\|^2+\sigma_v^2\gamma_v^2.
	\end{align*}
	Let $\cE_{i,k}^t := \EE(\|\hat u_{i,k}^t-u^t\|^2+\|\hat v_{i,k}^t-v_i^t\|^2)$. The above two inequalities can be combined into 
	\begin{equation}\label{eqa-fedavg-ce}
		\begin{aligned}
			\cE_{i,k+1}^t &\le \Big(1+\frac{1}{K-1} + 4L^2K(\gamma_u^2+\gamma_v^2)\Big)\cE_{i,k}^t + 2K\gamma_u^2\EE\|\du f_i(u^t, v_i^t)\|^2\\
			&\qquad +2K\gamma_v^2\EE\|\dv f_i(u^t, v_i^t)\|^2+\sigma_u^2\gamma_u^2 +\sigma_v^2\gamma_v^2.
		\end{aligned}
	\end{equation}
	Considering $4L^2K(\gamma_u^2+\gamma_v^2) \le \frac{1}{K-1}$, apply \eqref{eqa-fedavg-ce} $k$ times to obtain
	\begin{align*}
		\cE_{i,k}^t &\le \left(\begin{array}{r} 
			2K\gamma_u^2\EE\|\du f_i(u^t, v_i^t)\|^2 + \sigma_u^2\gamma_u^2 \\
			+ 2K\gamma_v^2\EE\|\dv f_i(u^t, v_i^t)\|^2 + \sigma_v^2\gamma_v^2
		\end{array} \right) \sum\limits_{r=0}^{k-1}\left(1+\frac{2}{K-1}\right)^r\\
		&\le 8K(K-1)(\gamma_u^2\EE\|\du f_i(u^t, v_i^t)\|^2\!+\gamma_v^2\EE\|\dv f_i(u^t, v_i^t)\|^2)+4K(\sigma_u^2\gamma_u^2+\sigma_v^2\gamma_v^2).
	\end{align*}
	We employ induction to prove the first inequality. The second inequality derives from $\sum_{r=0}^{k-1}\big(1+\frac{2}{K-1}\big)^r \le \frac{K-1}{2}\big(1+\frac{2}{K-1}\big)^{k} \le \frac{K-1}{2}\big(1+\frac{2}{K-1}\big)^{K-1}$ and $1+x \le e^x$.
	Note that $\cE^t = \sumik \cE_{i,k}^t$, we have
	\begin{align}
		\cE^t &\le 8K^2(K-1)\! \sumi(\gamma_u^2\EE\|\du f_i(u^t, v_i^t)\|^2\!+\! \gamma_v^2\EE\|\dv f_i(u^t, v_i^t)\|^2) +4nK^2(\sigma_u^2\gamma_u^2\!+\!\sigma_v^2\gamma_v^2)  \nonumber \\
		&\le 8K^2(K-1)\gamma_u^2\cB^t+8nK^2(K-1)\gamma_v^2\cG_{\vv}^t+4nK^2(\sigma_u^2\gamma_u^2+\sigma_v^2\gamma_v^2). \nonumber 
	\end{align}
\end{proof}

Finally, we prove that $\cB^t$ increases sufficiently slow with proper step sizes in the following lemma, whose proof can be found in Appendix \ref{sec-proof}. 

\begin{lemma}[\sc bounding $\cB^t$]\label{lem-fedavg-b}
    The bound $\cB^t \le n\cG_u^t + nb^2$ holds under Assumption~\ref{ass-hete}. 
    Furthermore, with full client participation, $\cB^t$ can be bounded as  
    \begin{align*}
        \cB^t &\le \Big(1+\frac{2}{T-1}\Big)\cB^{t-1} + 2nT\Big(\cG_u^{t-1}+\cG_{\vv}^{t-1}\Big) + \frac{2nT}{K}\Big(\frac{\sigma_u^2}{n}+\sigma_v^2\Big)  
    \end{align*}
    without Assumption~\ref{ass-hete}, as long as Assumptions \ref{ass-smooth} and \ref{ass-sto} hold, and the step sizes satisfy $\eta_u \ge \max(T, \sqrt{n}), \eta_v \ge 1$, and $\gamma \le \frac{1}{8LK}$. 
\end{lemma}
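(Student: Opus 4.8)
The first inequality is immediate: applying the BGD bound of Assumption~\ref{ass-hete} with $u=u^t$, $\vv=\vv^t$ and multiplying by $n$ gives $\cB^t = \sumi \EE\|\du f_i(u^t,v_i^t)\|^2 \le n\EE\|\du f(u^t,\vv^t)\|^2 + nb^2 = n\cG_u^t + nb^2$, since $\cG_u^t = \EE\|\du f(u^t,\vv^t)\|^2$. So the real work is the recursive bound, and my plan is to control how much a single outer loop can move each per-client gradient $\du f_i(u^t,v_i^t)$. With full participation ($m=n$, $\cS^{t-1}=[n]$), lines~9 and~11 of Algorithm~\ref{alg-fedavg} yield the closed forms $u^t-u^{t-1} = -\frac{\gamma}{n}\sumik \du F(u_{i,k}^{t-1},v_{i,k}^{t-1},\xi_{i,k}^{t-1})$ and $v_i^t-v_i^{t-1} = -\gamma\sumk \dv F(u_{i,k}^{t-1},v_{i,k}^{t-1},\xi_{i,k}^{t-1})$, where I use $\gamma=\gamma_u\eta_u=\gamma_v\eta_v$.

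Next I would apply the $L$-smoothness of $\du f_i$ (Assumption~\ref{ass-smooth}) together with Young's inequality $\|a+b\|^2\le(1+\kappa)\|a\|^2+(1+\kappa^{-1})\|b\|^2$ to obtain
\[ \cB^t \le (1+\kappa)\cB^{t-1} + (1+\kappa^{-1})\,2L^2\Big(n\,\EE\|u^t-u^{t-1}\|^2 + \sumi\EE\|v_i^t-v_i^{t-1}\|^2\Big), \]
so everything reduces to bounding the two one-step displacements. For each I split the stochastic-gradient sum into its conditional mean and a martingale-difference noise part, using $\|a+b\|^2\le 2\|a\|^2+2\|b\|^2$ to dodge cross terms and unbiasedness (Assumption~\ref{ass-sto}) to collapse the noise cross terms; the noise then contributes at order $K\sigma_u^2/n$ and $K\sigma_v^2$ respectively. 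In the mean part I add and subtract $\du f_i(u^{t-1},v_i^{t-1})$ (resp.\ $\dv f_i$), use $\avgi \du f_i(u^{t-1},v_i^{t-1}) = \du f(u^{t-1},\vv^{t-1})$, and recognize that the residual local-drift sum is exactly $\cE^{t-1}$ --- this identification is where full participation is essential, since it forces $u_{i,k}^{t-1}=\hat u_{i,k}^{t-1}$ and $v_{i,k}^{t-1}=\hat v_{i,k}^{t-1}$ for every $i$. The upshot is a displacement bound of the form $n\EE\|u^t-u^{t-1}\|^2+\sumi\EE\|v_i^t-v_i^{t-1}\|^2 \lesssim K^2\gamma^2 n(\cG_u^{t-1}+\cG_{\vv}^{t-1}) + KL^2\gamma^2\cE^{t-1} + K\gamma^2(\sigma_u^2+n\sigma_v^2)$.

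Finally I would eliminate $\cE^{t-1}$ with Lemma~\ref{lem-fedavg-e} (whose hypothesis $4L^2K(K-1)(\gamma_u^2+\gamma_v^2)\le1$ follows from $\gamma\le\frac{1}{8LK}$), which re-expresses $\cE^{t-1}$ through $\cB^{t-1}$, $\cG_{\vv}^{t-1}$ and noise and hence produces a small self-referential $\cB^{t-1}$ contribution. Choosing $\kappa=\frac{1}{T-1}$ (so $1+\kappa^{-1}=T$) and invoking the step-size budget, I would check three accountings: (i) the extra $\cB^{t-1}$ term routed through $\cE^{t-1}$ is at most $\frac{1}{T-1}$, raising the total $\cB^{t-1}$ coefficient to $1+\frac{2}{T-1}$; (ii) the gradient terms stay under $2nT(\cG_u^{t-1}+\cG_{\vv}^{t-1})$; and (iii) the noise stays under $\frac{2nT}{K}(\frac{\sigma_u^2}{n}+\sigma_v^2)$. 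Each check reduces to $L^2K^2\gamma^2\le\frac{1}{64}$ from $\gamma\le\frac{1}{8LK}$ together with the smallness of $\gamma_u,\gamma_v$, where $\eta_v\ge1$ gives $\gamma_v\le\gamma$. The main obstacle --- and the reason for the two lower bounds on $\eta_u$ --- is the $\sigma_u^2$ contribution routed through $\cE^{t-1}$, which carries an explicit factor $n$ (from the $4nK^2\sigma_u^2\gamma_u^2$ term of Lemma~\ref{lem-fedavg-e}); this factor is absorbed only because $\gamma_u^2=\gamma^2/\eta_u^2\le\gamma^2/n$, i.e.\ because $\eta_u\ge\sqrt n$, while $\eta_u\ge T$ is what shrinks the self-referential $\cB^{t-1}$ term enough to fit inside the $\frac{1}{T-1}$ slack. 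Tracking these $n$- and $T$-dependencies correctly is the delicate point; the remaining estimates are routine uses of the mean inequality and Assumptions~\ref{ass-smooth}--\ref{ass-sto}.
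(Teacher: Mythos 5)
Your proposal is correct and follows essentially the same route as the paper's proof: the Young-inequality recursion with $\kappa$ tied to $T$, the martingale/mean split of the one-step displacements bounded via $T_{2,u},T_{2,\vv}$ (Lemma \ref{lem-T2uv}), and the elimination of $\cE^{t-1}$ through Lemma \ref{lem-fedavg-e}, with $\eta_u\ge T$ and $\eta_u\ge\sqrt n$ used exactly where you say they are needed. No gaps.
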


\subsubsection{Convergence rate}  With the above lemmas, it is straightforward to establish the convergence property of FedAvg-P with partial client participation. 
\begin{theorem}[\sc Convergence with Partial Participation]
	\label{thm-fedavg}
    Under Assumptions \ref{ass-smooth}, \ref{ass-sto} and \ref{ass-hete}, if the step sizes satisfies $\eta_u \ge \sqrt{m}, \eta_v \ge \sqrt{\frac{n}{m}}$ and $\gamma \le \frac{1}{32LK}$, it holds that
    \begin{align*}
        \avgt(\cG_u^t + \hat \cG_{\vv}^t) &\le \frac{4F_0}{TK\gamma}\! +\! 12L\left(\frac{4(n-m)Kb^2}{mn}\!+\!\frac{\sigma_u^2}{m}\!+\! \frac{m\sigma_v^2}{n}\right)\gamma\! +\! \frac{128L^2K(K-1)b^2}{\eta_u^2}\gamma^2
    \end{align*}
     where $F_0$, $\cG_u^t$, and $\hat \cG_{\vv}^t$ are defined in Table \ref{tab-notation}.
\end{theorem}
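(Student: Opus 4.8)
The plan is to chain the supporting lemmas into a single one-step descent inequality whose only surviving progress terms are $\cG_u^t$ and $\cG_{\vv}^t$, and then telescope. First I would start from the descent bound of Lemma \ref{lem-fedavg-step} and substitute the estimates of Lemmas \ref{lem-T1uv}, \ref{lem-T2uv}, and \ref{lem-T3u} for $T_{1,u}, T_{1,\vv}, T_{2,u}, T_{2,\vv}$, and $T_{3,u}$. After this the right-hand side is a linear combination of $\cG_u^t$, $\cG_{\vv}^t$, $\cE^t$, and the constants $b^2, \sigma_u^2, \sigma_v^2$: the leading progress contributions are $-\frac{K\gamma}{2}\cG_u^t$ and $-\frac{K\gamma m}{2n}\cG_{\vv}^t$ (from the $T_1$ terms), while $T_{2,u}, T_{2,\vv}$ inject the positive pieces $4LK^2\gamma^2\cG_u^t$ and $\frac{4LK^2\gamma^2 m}{n}\cG_{\vv}^t$. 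I would collect the total coefficient $C_\cE$ multiplying $\cE^t$ (which is $O(L^2\gamma/n)$) and isolate the constant heterogeneity term $\frac{12L(n-m)K^2\gamma^2}{mn}b^2$ coming from $T_{3,u}$.

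Next I would eliminate $\cE^t$ using Lemma \ref{lem-fedavg-e} and then eliminate $\cB^t$ via the first (non-recursive) bound of Lemma \ref{lem-fedavg-b}, namely $\cB^t \le n\cG_u^t + nb^2$; the recursive bound is not needed here since this is the partial-participation case under Assumption \ref{ass-hete}. This reintroduces $\cG_u^t, \cG_{\vv}^t$ and further constant terms. The key step is to verify that the newly created positive $\cG_u^t, \cG_{\vv}^t$ contributions are negligible against the $-\frac{K\gamma}{2}$ and $-\frac{K\gamma m}{2n}$ reservoirs. This is where the step-size hypotheses enter: since $\gamma = \gamma_u\eta_u = \gamma_v\eta_v$, the conditions $\eta_u \ge \sqrt m$ and $\eta_v \ge \sqrt{n/m}$ give $\gamma_u^2 \le \gamma^2/m$ and $\gamma_v^2 \le \gamma^2 m/n$, and $\gamma \le \frac{1}{32LK}$ turns every extra $\cG_u^t$/$\cG_{\vv}^t$ coefficient into a higher power of $LK\gamma$ that is a small fraction of the leading term. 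I would use these to show the net coefficients are at most $-\frac{K\gamma}{4}$ and $-\frac{K\gamma m}{4n}$.

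The crucial observation producing the stated left-hand side is that $\frac{K\gamma m}{4n}\cG_{\vv}^t = \frac{K\gamma}{4}\hat\cG_{\vv}^t$ by the definition $\hat\cG_{\vv}^t = \frac{m}{n}\cG_{\vv}^t$, so the two progress terms combine into $\frac{K\gamma}{4}(\cG_u^t + \hat\cG_{\vv}^t)$. The descent inequality then reads $\EE f(u^{t+1},\vv^{t+1}) - \EE f(u^t,\vv^t) \le -\frac{K\gamma}{4}(\cG_u^t + \hat\cG_{\vv}^t) + R$, where the ($t$-independent) constant $R$ gathers the noise terms $\frac{2LK\sigma_u^2\gamma^2}{m}, \frac{2LK\sigma_v^2\gamma^2 m}{n}$ from Lemma \ref{lem-fedavg-step}, the $T_{3,u}$ heterogeneity term, and the $\gamma^3$-order $\sigma^2$- and $b^2$-terms produced by $C_\cE\cdot\cE^t$. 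Summing over $t=0,\dots,T-1$, telescoping, using $\EE f(u^0,\vv^0) - \EE f(u^T,\vv^T) \le F_0$, and dividing by $\frac{TK\gamma}{4}$ yields $\avgt(\cG_u^t + \hat\cG_{\vv}^t) \le \frac{4F_0}{TK\gamma} + \frac{4R}{K\gamma}$; one final use of $\gamma \le \frac{1}{32LK}$ to downgrade one power of $\gamma$ in each constant matches the stated $\gamma$-coefficient $12L\big(\frac{4(n-m)Kb^2}{mn}+\frac{\sigma_u^2}{m}+\frac{m\sigma_v^2}{n}\big)$ and the $\gamma^2$-coefficient $\frac{128L^2K(K-1)b^2}{\eta_u^2}$, the latter arising from $C_\cE\cdot 8K^2(K-1)\gamma_u^2 nb^2$ with $\gamma_u^2=\gamma^2/\eta_u^2$.

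I expect the main obstacle to be the bookkeeping in the re-substitution of $\cE^t$ and $\cB^t$: simultaneously keeping the induced $\cG_u^t, \cG_{\vv}^t$ terms strictly dominated and tracking which constants land at order $\gamma$ versus $\gamma^2$ after division by $K\gamma$. The circular dependence $\cE^t \to \cB^t \to \cG_u^t$ is the subtle point, and it is precisely what forces the use of the non-recursive estimate $\cB^t \le n\cG_u^t + nb^2$ rather than the telescoping bound of Lemma \ref{lem-fedavg-b}.
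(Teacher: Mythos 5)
Your proposal is correct and follows essentially the same route as the paper's proof: substitute Lemmas \ref{lem-T1uv}--\ref{lem-T3u} into Lemma \ref{lem-fedavg-step}, eliminate $\cE^t$ via Lemma \ref{lem-fedavg-e} and $\cB^t$ via the non-recursive bound $\cB^t \le n\cG_u^t + nb^2$, absorb the induced $\cG_u^t,\cG_{\vv}^t$ terms using $\gamma\le\frac{1}{32LK}$, $\eta_u\ge\sqrt m$, $\eta_v\ge\sqrt{n/m}$, and telescope. Your accounting of where each constant lands (the $T_{3,u}$ term giving $12(\frac1m-\frac1n)LK^2b^2\gamma^2$ and the $\cE^t$-induced $\frac{32L^2K^2(K-1)b^2\gamma^3}{\eta_u^2}$ becoming the stated $\gamma$- and $\gamma^2$-coefficients after division by $K\gamma/4$) matches the paper exactly.
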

\begin{proof}
	We start by substituting Lemmas \ref{lem-T1uv}, \ref{lem-T2uv}, and \ref{lem-T3u} into Lemma \ref{lem-fedavg-step}. By recalling the definition $\hat \cG_{\vv}^t = \frac{m}{n}\cG_{\vv}^t$, we obtain the following expression:
	\begin{equation}\label{eqa-fedavg-step1}
		\begin{aligned}
			\EE f(u^{t+1},\vv^{t+1}) &\le \EE f(u^t,\vv^t) + \Big(-\frac{K\gamma}{2} +4LK^2\gamma^2\Big)(\cG_u^t + \hat\cG_{\vv}^t)\\
			&\quad +\Big(\frac{L^2\gamma}{n}+\frac{56L^3K\gamma^2}{n}+\frac{L^2\gamma m}{n^2}+\frac{8L^3K\gamma^2m}{n^2}\Big)\cE^t\\
			&\quad + 12\Big(\frac{1}{m}-\frac{1}{n}\Big)LK^2b^2\gamma^2 + \frac{2LK\sigma_u^2\gamma^2}{m} + \frac{2LK\sigma_v^2\gamma^2m}{n}\\
			&\le \EE f(u^t,\vv^t)+ \Big(-\frac{K\gamma}{2}+4LK^2\gamma^2\Big)(\cG_u^t+\hat \cG_{\vv}^t) \\
			&\quad +\frac{4L^2\gamma}{n}\cE^t + 12\Big(\frac{1}{m}-\frac{1}{n}\Big)LK^2b^2\gamma^2 + \frac{2LK\sigma_u^2\gamma^2}{m} + \frac{2LK\sigma_v^2\gamma^2m}{n}, 
		\end{aligned}
	\end{equation}
	where the second inequality holds since $32LK\gamma \le 1$ and $m \le n$. We now apply Lemmas \ref{lem-fedavg-e} and \ref{lem-fedavg-b} in this case.
	\begin{align*}
		\cE^t 
		&\le 8K^2(K-1)\gamma_u^2\cB^t + 8nK^3\gamma_v^2\cG_{\vv}^t + 4nK^2(\sigma_u^2\gamma_u^2+\sigma_v^2\gamma_v^2)\\
		&\le \frac{8nK^2(K-1)b^2\gamma^2}{\eta_u^2} + 8nK^3\gamma^2\Big(\frac{1}{\eta_u^2}\cG_u^t + \frac{1}{\eta_v^2}\cG_{\vv}^t\Big) + 4nK^2\gamma^2\Big(\frac{\sigma_u^2}{\eta_u^2}+\frac{\sigma_v^2}{\eta_v^2}\Big)\\
		&\le \frac{8nK^2(K-1)b^2\gamma^2}{\eta_u^2} + \frac{nK^2\gamma}{4L}\Big(\cG_u^t+\hat \cG_{\vv}^t\Big) + \frac{nK\gamma}{4L}\Big(\frac{\sigma_u^2}{m}+\frac{m\sigma_v^2}{n}\Big),
	\end{align*}
	where the final inequality holds since $32LK\gamma \le 1, \eta_u \ge \sqrt{m}$, and $\eta_v \ge \sqrt{\frac{n}{m}}$. Together with \eqref{eqa-fedavg-step1}, we have
	\begin{align*}
		\EE f(u^{t+1},\vv^{t+1})
		&\le \EE f(u^t,\vv^t) + \Big(\!\!-\!\frac{K\gamma}{2}+5LK^2\gamma^2\Big) \left(\cG_u^t + \hat\cG_{\vv}^t\right) + \frac{32L^2K^2(K-1)b^2\gamma^3}{\eta_u^2} \\
		&\quad  + 12\Big(\frac{1}{m}-\frac{1}{n}\Big)LK^2b^2\gamma^2 + \frac{3LK\sigma_u^2\gamma^2}{m} + \frac{3LK\sigma_v^2\gamma^2m}{n}.
	\end{align*}
	Then rewrite the inequality as
	\begin{align*}
		\Big(\frac{1}{2} - 5 LK\gamma\Big)(\cG_u^t + \hat\cG_{\vv}^t)
		&\le \frac{\EE[f(u^t, \vv^t) - f(u^{t+1},\vv^{t+1})]}{K\gamma}  + \frac{32L^2K(K-1)b^2\gamma^2}{\eta_u^2}\\
		&\quad+ 3L\Big(\frac{4(n-m)Kb^2}{mn} + \frac{\sigma_u^2}{m} + \frac{m\sigma_v^2}{n}\Big)\gamma.
	\end{align*}
	The desired bound can then be obtained by taking average over $t=0,1,...,T-1$ and applying $5LK\gamma \le \frac{1}{4}$. 
\end{proof}

With proper step sizes, we have the following corollary. 
\begin{corollary}[\sc convergence rate]
	\label{cor-fedavg-rate}
	Under the same assumptions as in Theorem \ref{thm-fedavg}, by setting $\gamma = \big(
32LK\!+\!\sqrt{\frac{3LKT}{F_0} (\frac{4(n-m)Kb^2}{mn}\!+\!\frac{\sigma_u^2}{m}+\frac{m\sigma_v^2}{n})}\big)^{-1}$, $\eta_v \ge \sqrt{\frac{n}{m}}$, and $\eta_u \ge \max(\sqrt{m}, \sqrt{\frac{b^2T}{LF_0}})$,  
	it holds that
	\begin{align*}
		\avgt (\cG_u^t+\hat\cG_{\vv}^t)&\le 8\sqrt{3\left(\frac{4(n-m)Kb^2}{mn}+\frac{\sigma_u^2}{m} + \frac{m\sigma_v^2}{n}\right)\frac{LF_0}{KT}} + \frac{129LF_0}{T}.
	\end{align*}
\end{corollary}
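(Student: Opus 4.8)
The plan is to substitute the prescribed step size directly into the three-term bound of Theorem~\ref{thm-fedavg} and estimate each summand separately. Introduce the shorthand $V := \frac{4(n-m)Kb^2}{mn}+\frac{\sigma_u^2}{m}+\frac{m\sigma_v^2}{n}$ for the aggregated variance/heterogeneity factor, so that the chosen step size reads $\gamma^{-1} = 32LK + \sqrt{3LKTV/F_0}$. Two elementary consequences of this choice drive the entire argument. First, because both summands defining $\gamma^{-1}$ are nonnegative, we simultaneously have $\gamma \le \frac{1}{32LK}$ and $\gamma \le \sqrt{F_0/(3LKTV)}$; the former guarantees that the step-size restriction of Theorem~\ref{thm-fedavg} is met, while the latter controls the linear-in-$\gamma$ term. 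Second, since $1/\gamma$ is itself a sum, the optimization term $\frac{4F_0}{TK\gamma}$ splits exactly into $\frac{128LF_0}{T} + 4\sqrt{3LVF_0/(TK)}$.

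Next I would bound the remaining two terms. For the linear term, inserting $\gamma \le \sqrt{F_0/(3LKTV)}$ into $12LV\gamma$ gives, after simplifying the radical, the bound $4\sqrt{3LVF_0/(KT)}$, which is identical to the second piece produced by the optimization term; adding the two yields exactly $8\sqrt{3LVF_0/(TK)} = 8\sqrt{3V\,LF_0/(KT)}$, the dominant term in the claimed rate. For the quadratic term $\frac{128L^2K(K-1)b^2}{\eta_u^2}\gamma^2$, I would use the prescribed lower bound $\eta_u^2 \ge \frac{b^2T}{LF_0}$ to replace $b^2/\eta_u^2$ by $LF_0/T$, and then use $\gamma^2 \le (32LK)^{-2}$ together with $K(K-1)\le K^2$ to see that $128L^2K(K-1)\gamma^2 \le \frac18$. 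Hence this term is at most $\frac18\cdot\frac{LF_0}{T}\le \frac{LF_0}{T}$.

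Collecting the three estimates gives $\avgt(\cG_u^t+\hat \cG_{\vv}^t) \le \frac{128LF_0}{T} + 8\sqrt{3V\,LF_0/(KT)} + \frac{LF_0}{T}$, and combining the two $LF_0/T$ contributions produces the constant $129$, which is precisely the claimed bound. The role of the auxiliary conditions $\eta_v \ge \sqrt{n/m}$ and $\eta_u \ge \sqrt{m}$ is only to keep the hypotheses of Theorem~\ref{thm-fedavg} in force, whereas $\eta_u \ge \sqrt{b^2T/(LF_0)}$ is exactly what is needed to tame the local-drift term. There is no genuine obstacle here beyond careful constant bookkeeping: the only points requiring attention are verifying that the chosen $\gamma$ indeed satisfies $\gamma \le \frac{1}{32LK}$ (so that Theorem~\ref{thm-fedavg} applies) and checking that the split of $1/\gamma$ matches the linear term's square-root contribution so that they merge into the single prefactor $8$.
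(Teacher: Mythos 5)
Your proposal is correct, and it is essentially the argument the paper intends: the corollary is stated without proof immediately after Theorem~\ref{thm-fedavg}, and the only step is the direct substitution you carry out — splitting $1/\gamma$ into its two summands, bounding the linear term via $\gamma \le \sqrt{F_0/(3LKTV)}$ so that it merges with the square-root piece into the prefactor $8$, and using $\eta_u^2 \ge b^2T/(LF_0)$ together with $\gamma \le (32LK)^{-1}$ to absorb the quadratic drift term into $\tfrac{1}{8}\cdot\tfrac{LF_0}{T}$, giving $128+\tfrac18 \le 129$. The constant bookkeeping all checks out.
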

\begin{remark}[\sc Understanding the Convergence Rate]
Corollary \ref{cor-fedavg-rate} provides explicit insights into how various key factors affect the convergence of FedAvg-P. Specifically, the convergence rate demonstrates that larger values of the gradient variances $\sigma_u^2$ and $\sigma_v^2$,  gradient dissimilarity $b^2$, and the gradient smoothness $L$ lead to slower convergence. It also shows that more local update steps $K$ or sampled clients $m$ accelerates the convergence, quantitatively justifying the benefit of exploiting more local updates and participating clients in the algorithm. In particular, when the value of $b^2$ is sufficiently small, the required number of outer loop iterations $T$ to achieve an $\epsilon$-accurate solution decreases linearly with the increase in $K$. This phenomenon is often referred to as achieving linear speedup with respect to $K$. A similar property also holds for $m$ when $\sigma_v^2$ is sufficiently small.
\end{remark}

\begin{remark}[\sc Faster rate than existing algorithms] 
Algorithms for solving problem \eqref{prob-general} have been proposed in several studies, and their convergence rates are listed in Table \ref{com-fedavg-partial}. It can be observed that our established rate in Corollary \ref{cor-fedavg-rate} for FedAvg-P is faster than existing results. 
\end{remark}
\begin{table}[!htbp]
\renewcommand\arraystretch{1.9}
\caption{Comparison of different algorithms to solve PFL problem \eqref{prob-general}.}
\label{com-fedavg-partial}
\centering
\scalebox{1}{
\begin{tabular}{ p{3.1cm}|p{8.8cm}}
 \toprule
   Algorithms & Convergence Rate \\
 \midrule
 FedSim \cite{pillutla2022federated} 
 &$\frac{1}{\sqrt{T}}\sqrt{\sigma_u^2 + \frac{m\sigma_v^2}{n} + \left(1-\frac{m}{n}\right)b^2} +  {\left(\frac{(b^2+\sigma_u^2+\sigma_v^2)(K-1)}{KT^2}\right)}^{\frac{1}{3}} + \frac{1}{T}$  \\
  FedAlt \cite{pillutla2022federated} 
 &$\frac{1}{\sqrt{T}}\sqrt{\sigma_u^2 + \frac{m\sigma_v^2}{n} + \left(1-\frac{m}{n}\right)b^2} +  {\left(\frac{(b^2+\sigma_u^2+\frac{m\sigma_v^2}{n})(K-1)}{KT^2}\right)}^{\frac{1}{3}} + \frac{1}{T}$  \\
 \textbf{FedAvg-P (Ours)}
 &
 $\boldsymbol{\frac{1}{ 
\sqrt{KT}}\sqrt{\frac{\sigma_u^2}{m} + \frac{m\sigma_v^2}{n} + (\frac{1}{m}-\frac{1}{n})Kb^2} + \frac{1}{T}}$
 \\
  \bottomrule
\end{tabular}
}
\end{table}
\subsubsection{Convergence rate in special scenarios} 
As discussed in Section \ref{subsec-models}, the partial model personalization problem \eqref{prob-general} provides a general formulation that encompasses various important scenarios. When all personal variables $v_i$ are removed, problem \eqref{prob-general} reduces to the classical FL problem \eqref{prob-full}. In this scenario, the convergence rate of FedAvg-P established in Corollary \ref{cor-fedavg-rate} reduces to the following one of FedAvg:
\begin{align*}
\avgt \EE\|\nabla f(u^t)\|^2 \le 8\sqrt{3\left(\frac{4(n-m)Kb^2}{mn}+\frac{\sigma_u^2}{m} \right)\frac{LF_0}{KT}} + \frac{129LF_0}{T}
\end{align*}
by removing the influence of $\sigma_v^2$ from the rate. This result is superior to {{existing best-known} convergence rates for FedAvg in the literature, see the upper part of Table~\ref{tab-fedavg-full} for detailed comparison. 

\begin{table}[!htbp]
\renewcommand\arraystretch{1.75}
\caption{Convergence rates of FedAvg when solving the full model \eqref{prob-full}.}
\label{tab-fedavg-full}
\centering
\label{tab}
\scalebox{1}{
\begin{threeparttable}
\begin{tabular}{l|l|c|c}
 \toprule
  Ref. & Convergence Rate & Assumption &Sampling \tnote{1} \\
 \hline
 \cite{karimireddy2020scaffold}& $\frac{1}{\sqrt{mKT}}\sqrt{\sigma_u^2+K\left(1-\frac{m}{n}\right)b^2} + (\frac{b^2}{T^2})^{\frac{1}{3}} + \frac{1}{T}$ &\centering{BGD}& Yes \\
 \cite{yang2021achieving}& $\frac{1}{\sqrt{mKT}}\left(\sigma_u^2+K\left(1-\frac{m}{n}\right)b^2\right) + \frac{Kb^2 + \sigma_u^2 + 1}{T}$ &\centering{BGD}& Yes \\
 \textbf{Ours}& $\boldsymbol{\frac{1}{\sqrt{mKT}}\sqrt{\sigma_u^2+K\left(1-\frac{m}{n}\right)b^2} + \frac{1}{T}}$  &\centering{\textbf{BGD}} &\textbf{Yes}\\
  \hline
   \cite{yu2019parallel}& $\sqrt{\frac{\sigma_u^2}{nKT}} + \frac{b^2nK}{T}$ &\centering{BG\tnote{2}} &No \\
 \cite{karimireddy2020scaffold}& $\sqrt{\frac{\sigma_u^2}{nKT}} + (\frac{b^2}{T^2})^{\frac{1}{3}} + \frac{1}{T}$ &\centering{BGD} &No \\
 \cite{yang2021achieving}& $\frac{\sigma_u^2}{\sqrt{nKT}} + \frac{Kb^2 + \sigma_u^2 + 1}{T}$ &\centering{BGD} &No \\
 \textbf{Ours}& $\boldsymbol{\sqrt{\frac{\sigma_u^2}{nKT}} + \frac{1}{T}}$  &  \centering{$\boldsymbol{-}$} & \textbf{No}\\
 \bottomrule
\end{tabular}
\begin{tablenotes}
\footnotesize
\item[1] {The column labeled ``Sampling" specifies whether the analysis applies to the partial client participation scenario. When ``no sampling" is indicated, it signifies that the analysis exclusively pertains to the scenario with full client participation, i.e., $m=n$.}
\vspace{1mm}
\item[2] The BG (Bounded Gradient) assumption refers to $\frac{1}{n}\sum_{i=1}^n \|\du f_i(u,v_i)\|^2 \le b^2$, which is much stronger than the BGD Assumption \ref{ass-hete}. 
\end{tablenotes}
\end{threeparttable}
}
\end{table}

On the other hand, problem \eqref{prob-general} reduces to the fully PFL problem \eqref{prob-full-partial} when the shared variable $u$ is removed from the problem formulation. In this scenario, the convergence rate of FedAvg-P reduces to  
\begin{align*}
	\frac{1}{nT}\sum_{i=1}^n \sum_{t=0}^{T-1}\EE\|\nabla f_i(v^t_i)\|^2 \le 8\sqrt{\frac{3 \sigma_v^2LF_0}{T}} + \frac{129LF_0}{T}
\end{align*}
by setting $K=1$, $m=n$ and $\sigma_u^2 = 0$. This result matches the state-of-the-art rate of the vanilla parallel SGD algorithm established in literature when solving model \eqref{prob-full-partial}.

\subsection{Convergence analysis with full client participation}\label{sec-fedavg-result-full} 
Theorem \ref{thm-fedavg} and Corollary \ref{cor-fedavg-rate} establish the convergence rate of FedAvg-P with partial client participation. This section will establish an enhanced convergence property of FedAvg-P when all clients participate in every outer loop update.

\subsubsection{Convergence rate} When all clients participate in every outer loop update, the following theorem establishes the convergence rate for FedAvg-P.
\begin{theorem}[\sc Convergence with full Participation]
\label{thm-fedavg-f}
Under Assumptions \ref{ass-smooth} and \ref{ass-sto}, if all clients participate in every outer loop ($m=n$) and the outer step size satisfies $\eta_u \ge \max(T, \sqrt{\cB^0T/nLF_0}, \sqrt{n}), \eta_v \ge 1$ and $\gamma \le \frac{1}{84LK}$, it holds that
\begin{align*}
     \avgt(\cG_u^t + \cG_{\vv}^t) &\le \frac{4F_0}{TK\gamma} + 44L\Big(\frac{\sigma_u^2}{n}+ \sigma_v^2\Big)\gamma + \frac{LF_0}{T}.
\end{align*}
Furthermore, when $\gamma = \left(84LK + \sqrt{\frac{11LKT}{F_0}(\frac{\sigma_u^2}{n}+\sigma_v^2)}\right)^{-1}$,
it holds that
\begin{align*}
\avgt(\cG_u^t+\cG_{\vv}^t)&\le 8\sqrt{\frac{11LF_0}{KT} \Big(\frac{\sigma_u^2}{n}+\sigma_v^2\Big)}+ \frac{337LF_0}{T}, 
\end{align*}
where $\cB^0 = \sum_i \EE\|\du f(u^0, v_i^0)\|^2$.
\end{theorem}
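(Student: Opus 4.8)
The plan is to mirror the argument of Theorem~\ref{thm-fedavg}, but to replace the BGD-based control of $\cB^t$ (which is unavailable here, since Assumption~\ref{ass-hete} is dropped) by the recursive bound in Lemma~\ref{lem-fedavg-b}. First I would set $m=n$ in Lemma~\ref{lem-fedavg-step}, so that the $T_{3,u}$ term disappears and the one-step change reduces to $T_{1,u}+T_{1,\vv}+2LT_{2,u}+2LT_{2,\vv}$ plus the stochastic terms $\frac{2LK\sigma_u^2\gamma^2}{n}+2LK\sigma_v^2\gamma^2$. Substituting Lemmas~\ref{lem-T1uv} and~\ref{lem-T2uv} (with $m=n$) then yields a descent inequality of the form
\begin{align*}
\EE f(u^{t+1},\vv^{t+1}) \le \EE f(u^t,\vv^t) - \Big(\tfrac{K\gamma}{2}-4LK^2\gamma^2\Big)(\cG_u^t+\cG_{\vv}^t) + \Big(\tfrac{2L^2\gamma}{n}+\tfrac{16L^3K\gamma^2}{n}\Big)\cE^t + \tfrac{2LK\sigma_u^2\gamma^2}{n}+2LK\sigma_v^2\gamma^2,
\end{align*}
where the coefficient in front of $\cE^t$ is at most $\tfrac{4L^2\gamma}{n}$ once $8LK\gamma\le 1$.

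Next I would bound $\cE^t$ with Lemma~\ref{lem-fedavg-e}, whose hypothesis $4L^2K(K-1)(\gamma_u^2+\gamma_v^2)\le 1$ is a routine consequence of $\gamma\le\tfrac{1}{84LK}$. Using $\gamma=\gamma_u\eta_u=\gamma_v\eta_v$ together with $\eta_u\ge\sqrt n$ and $\eta_v\ge 1$ gives $\gamma_u^2\le\gamma^2/n$ and $\gamma_v^2\le\gamma^2$, so that the $\cG_{\vv}^t$ and noise parts of $\cE^t$ contribute, respectively, a term of order $L^2K^3\gamma^3\cG_{\vv}^t$ (absorbable into $-\tfrac{K\gamma}{2}\cG_{\vv}^t$ as soon as $L^2K^2\gamma^2$ is small) and a term of order $L^2K^2\gamma^3(\tfrac{\sigma_u^2}{n}+\sigma_v^2)$ (dominated by the main noise after dividing by $K\gamma$). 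The essential new ingredient is the remaining piece $\tfrac{8K^2(K-1)\gamma^2}{n}\cB^t$: because Assumption~\ref{ass-hete} is not invoked, I cannot collapse $\cB^t$ pointwise into $n\cG_u^t+nb^2$. Instead I would unroll the recursion of Lemma~\ref{lem-fedavg-b}, using $(1+\tfrac{2}{T-1})^{T-1}\le e^2$, to obtain
\begin{align*}
\cB^t \le e^2\cB^0 + 2e^2 nT\sum_{s=0}^{t-1}\big(\cG_u^s+\cG_{\vv}^s\big) + \frac{2e^2 nT^2}{K}\Big(\frac{\sigma_u^2}{n}+\sigma_v^2\Big).
\end{align*}

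I would then telescope the descent inequality over $t=0,\dots,T-1$ and sum the $\cB^t$ contribution. Interchanging the order of summation in $\sum_t\sum_{s<t}$ produces a double factor $nT^2$ multiplying $\sum_s(\cG_u^s+\cG_{\vv}^s)$; the decisive point is that this is weighted by $\tfrac{4L^2\gamma}{n}\cdot 8K^2(K-1)\gamma_u^2=\tfrac{32L^2K^2(K-1)\gamma^3}{n\eta_u^2}$, and the condition $\eta_u\ge T$ cancels the $T^2$, leaving a coefficient of order $L^2K^2\gamma^2\cdot K\gamma$ that is a small multiple of $\tfrac{K\gamma}{2}$ for $\gamma\le\tfrac{1}{84LK}$; hence the gradient sums stay on the left with a positive coefficient bounded below by $\tfrac{K\gamma}{4}$, which is exactly what produces the leading $\tfrac{4F_0}{TK\gamma}$. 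The $\cB^0$ term, weighted by $\tfrac{32L^2K^2(K-1)\gamma^3}{n\eta_u^2}\cdot Te^2\cB^0$, is controlled by $\eta_u\ge\sqrt{\cB^0T/(nLF_0)}$, which turns $\tfrac{\cB^0T}{n\eta_u^2}$ into $LF_0$ and hence (after dividing by $TK\gamma$) into a contribution to the $\tfrac{LF_0}{T}$ term; the residual accumulated noise, carrying an extra $\gamma^2$ factor, folds into $L(\tfrac{\sigma_u^2}{n}+\sigma_v^2)\gamma$. Rearranging, dividing by $TK\gamma$, and collecting constants yields the first displayed bound; substituting the stated $\gamma$ and using $\tfrac{1}{a+b}\le\min\{a^{-1},b^{-1}\}$ exactly as in Corollary~\ref{cor-fedavg-rate} gives the second.

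The main obstacle is precisely this coupling: the error $\cE^t$ feeds on $\cB^t$, whose growth is in turn driven by the gradient norms $\cG_u^t+\cG_{\vv}^t$ that the theorem seeks to bound, so there is no pointwise shortcut in the absence of BGD. One must unroll and sum the $\cB^t$ recursion, which manufactures the large $nT$ and $nT^2$ prefactors, and the entire argument hinges on verifying that the enlarged step-size separation $\eta_u\ge\max\{T,\sqrt{\cB^0T/(nLF_0)},\sqrt n\}$ makes $\gamma_u=\gamma/\eta_u$ small enough to neutralize every one of these prefactors simultaneously, while degrading the rate only by the benign $\tfrac{LF_0}{T}$ term.
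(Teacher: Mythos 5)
Your proposal is correct and follows essentially the same route as the paper's proof: the $m=n$ case kills $T_{3,u}$, Lemmas~\ref{lem-T1uv}, \ref{lem-T2uv}, and \ref{lem-fedavg-e} give the descent inequality with the residual $\cB^t$ term, and the decisive step is unrolling the recursion of Lemma~\ref{lem-fedavg-b} with $(1+\tfrac{2}{T-1})^{T-1}\le e^2$, summing over $t$, and using $\eta_u\ge T$ to cancel the $T^2$ prefactor and $\eta_u\ge\sqrt{\cB^0T/(nLF_0)}$ to fold the $\cB^0$ contribution into the $\tfrac{LF_0}{T}$ term. Your intermediate constants differ slightly from the paper's but the logic and the role of each step-size condition are identical.
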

\begin{proof}
The first part of the proof is nearly identical to that of Theorem \ref{thm-fedavg}, with the only difference being that $m=n$ and $T_{3,u}=0$ in this case. By utilizing Lemmas \ref{lem-T1uv} and \ref{lem-T2uv} as well as considering $32LK\gamma \le 1$, we obtain the following inequality:
\begin{equation}\label{eqa-fedavg-step2}
\begin{aligned}
\EE f(u^{t+1},\vv^{t+1})
&\le \EE f(u^t,\vv^t) + \Big(-\frac{K\gamma}{2}+4LK^2\gamma^2\Big)(\cG_u^t + \cG_{\vv}^t)\\
&\quad +\frac{4L^2\gamma}{n}\cE^t + \frac{2LK\sigma_u^2\gamma^2}{n} + 2LK\sigma_v^2\gamma^2.
\end{aligned}
\end{equation}
Utilizing Lemma \ref{lem-fedavg-e} alongside the stipulated step-size conditions $32LK\gamma \le 1, \eta_u \ge \sqrt{n}, \eta_v \ge 1$, we can derive 
\begin{align*}
    \cE^t 
    &\le \frac{K^2\gamma}{4L\eta_u^2}\cB^t + \frac{nK^2\gamma}{4L}\cG_{\vv}^t + \frac{nK\gamma}{4L} \Big(\frac{\sigma_u^2}{n}+\sigma_v^2\Big),
    \end{align*}
Together with \eqref{eqa-fedavg-step2}, it holds that
\begin{align*}
\EE f(u^{t+1},\vv^{t+1})
&\le \EE f(u^t,\vv^t) + \Big(-\frac{K\gamma}{2}+5LK^2\gamma^2\Big)(\cG_u^t+\cG_{\vv}^t)\\
&\quad + \frac{LK^2\gamma^2}{n\eta_u^2}\cB^t + \frac{3LK\sigma_u^2\gamma^2}{n} + 3LK\sigma_v^2\gamma^2.
\end{align*}
Taking average over $t=0,1,...,T-1$, we obtain the following inequality:
\begin{equation}\label{eqa-fedavg-full1}
\begin{aligned}
    \Big(\frac{1}{2} - 5LK\gamma\Big)\avgt(\cG_u^t + \cG_{\vv}^t) &\le \frac{F_0}{TK\gamma} + 3L\Big(\frac{\sigma_u^2}{n}+ \sigma_v^2\Big)\gamma +\frac{LK\gamma}{nT\eta_u^2}\sum\limits_{t=0}^{T-1}\cB^t.
\end{aligned}
\end{equation}
Recalling Lemma \ref{lem-fedavg-b} and employing deductive reasoning yields that
\begin{align*}
\sum\limits_{t=0}^{T-1}\cB^t &\le \sum\limits_{t=0}^{T-1}\left(\Big(1+\frac{2}{T-1}\Big)^t\cB^0 + 2nT\sum\limits_{r=0}^{t-1}(\cG_u^r+\cG_{\vv}^r)\Big(1+\frac{2}{T-1}\Big)^{t-1-r}\right. \\
&\left.\quad + \frac{2nT}{K}\Big(\frac{\sigma_u^2}{n}+\sigma_v^2\Big)\sum\limits_{r=0}^{t-1}\Big(1+\frac{2}{T-1}\Big)^r\right)\\
&\le 8T\cB^0 + 16nT^2\sum\limits_{t=0}^{T-1}(\cG_u^t+\cG_{\vv}^t) + \frac{8nT^3}{K}\Big(\frac{\sigma_u^2}{n}+\sigma_v^2\Big),
\end{align*}
where the final step derives from the fact that $\sum_{r=0}^{t-1}\big(1+\frac{2}{T-1}\big)^r \le \frac{T-1}{2}\big(1+\frac{2}{T-1}\big)^t, \big(1+\frac{2}{T-1}\big)^t \le \big(1+\frac{2}{T-1}\big)^{T-1}$ and $1+x\le e^x$.
Combine this inequality with \eqref{eqa-fedavg-full1} and use $\eta_u \ge T, 32LK\gamma \le 1$ to obtain
\begin{align*}
     \Big(\frac{1}{2} - 21LK\gamma\Big)\avgt(\cG_u^t + \cG_{\vv}^t) &\le \frac{F_0}{TK\gamma} + 11L\Big(\frac{\sigma_u^2}{n}+ \sigma_v^2\Big)\gamma + \frac{\cB^0}{4n\eta_u^2}.
\end{align*}
By utilizing $21LK\gamma \le \frac{1}{4}$ and $\eta_u \ge \sqrt{\cB^0T/nLF_0}$, we complete the proof.
\end{proof}
\begin{remark}[\sc Relaxing gradient dissimilarity assumption] 
According to Theorem \ref{thm-fedavg-f}, FedAvg-P with full client participation can converge without requiring any bounded gradient dissimilarity assumptions. Thus, the gradient dissimilarity bound $b^2$
does not affect the rate. This contrasts with FedAvg-P with partial client participation, whose convergence rate becomes slower when a large gradient dissimilarity bound $b^2$ is present.
\end{remark}

\subsubsection{Convergence rate for problem \eqref{prob-full}} 
By setting $\sigma_v^2 = 0$, the convergence rate established in Theorem \ref{thm-fedavg-f} reduces to the rate of FedAvg for solving problem \eqref{prob-full} with full client participation
\begin{align*}
\avgt \EE\|\nabla f(u^t)\|^2 &\leq 8\sqrt{\frac{11\sigma_u^2LF_0}{nKT}} + \frac{337LF_0}{T}
\end{align*}
without assuming any bounded gradient dissimilarity. This is a novel result since existing analyses for FedAvg rely on this restrictive assumption. The lower part of Table \ref{tab-fedavg-full} lists several existing FedAvg convergence rates for problem \eqref{prob-full}. It is observed that our derived rate outperforms all baselines under the mildest assumptions. 

\section{Scaffold-P Algorithm}\label{sec-scaffold}
While FedAvg-P with full client participation can converge without the bounded gradient dissimilarity assumption, this assumption remains necessary under partial client participation. This section presents the personalized Scaffold (Scaffold-P) algorithm, which completely remove the influence of $b^2$ even with partial client participation.

\subsection{Algorithm development}
Scaffold-P adapts the vanilla Scaffold algorithm \cite{karimireddy2020scaffold} originally designed for FL problem \eqref{prob-general} to the personalized setting with additional per-client variables (see Algorithm \ref{alg-scaffold}). Scaffold-P shares the same overall structure as FedAvg-P. The key difference in Scaffold-P lies in the introduction of auxiliary control variables $c_i$ (line 10) and $c$ (line 13) that respectively track the individual gradient $\nabla_u f_i(u, v_i)$ within each client $i$ and the globally averaged gradient $\nabla_u f(u, v_i)$. Since each bias-corrected stochastic gradient used to update $u$ in line 6 will gradually approach the same globally averaged gradient, i.e.,
\begin{align*}
\nabla_u F(u_{i,k}^t, v_{i,k}^t, \xi_{i,k}^t) - c_i^t + c^t \approx \frac{1}{n}\sum_{i=1}^n \nabla_u f_i(u^t, v_i^t),
\end{align*}
the tracking and correction enabled by $c_i$ and $c$ resolves the deviation caused by 
heterogeneous local updates. This eliminates the detrimental impact of gradient dissimilarity across clients. For this reason, Scaffold-P can overcome gradient dissimilarity issue suffered in FedAvg-P. 

Similarly to FedAvg-P, Scaffold-P is a general algorithm that can reduce to existing algorithms 
through the manipulation of specific parameters. 
When the personal variables $v_i$ are omitted from the algorithm, Scaffold-P effectively reverts to the Scaffold algorithm designed for problem \eqref{prob-full} with a single shared variable. 
On the other hand, when {$K=1, m=n$} and the shared variable $u$ is excised from the algorithm, Scaffold-P transforms into parallel SGD for solving problem \eqref{prob-full-partial}. Therefore, a convergence analysis for Scaffold-P is applicable to various existing approaches and problem formulations. 

\begin{algorithm}[tbp]
 \hspace*{\algorithmicindent} \small{\textbf{Input:}\ initialization $u^0, \vv^0, c_i^0=\frac{1}{K}\sum_k\du F(u^0, v_i^0, \xi^{-1}_{i,k}), c^0=\frac{1}{n}\sum_i c_i^0$}.\\
 \hspace*{\algorithmicindent} \small{\textbf{Output:}\ solution $u^T, \vv^T$}.
\begin{algorithmic}[1]
\For{each round $t=0,1,...,T-1$} 
\vspace{1mm}
\State \small{Sample clients $\cS^t$ uniformly randomly so that $|\cS^t| = m$}. \Comment{\footnotesize Partial Client Participation}
\For{\small{all clients $i \in \cS^t$ \textbf{in parallel}}}
\vspace{1mm}
\State Initialize $u_{i,0}^t \leftarrow u^t$, $v_{i,0}^t \leftarrow v_i^t$. \Comment{\footnotesize Communication}
\For{\small{$k=0,1,...,K-1$}}
\vspace{1mm}
\State $u_{i,k+1}^t \leftarrow u_{i,k}^t-\gamma_u(\nabla_u F(u_{i,k}^t, v_{i,k}^t, \xi_{i,k}^t) - c_i^t + c^t)$. \Comment{\footnotesize Local Updates for $u$}
\State $v_{i,k+1}^t \leftarrow v_{i,k}^t-\gamma_v\nabla_v F(u_{i,k}^t, v_{i,k}^t, \xi_{i,k}^t)$. \Comment{\footnotesize Local Updates for $\vv$}
\vspace{1mm}
\EndFor
\vspace{1mm}
\State $u_i^{t+1} \leftarrow u_{i,K}^t$, $v_i^{t+1} \leftarrow (1-\eta_v)v_i^t + \eta_v v_{i,K}^t$.  
\State $c_i^{t+1} \leftarrow c_i^t - c^t + \frac{1}{K\gamma_u}(u^t - u_i^{t+1})$.
\EndFor
\vspace{1mm}
\State $u^{t+1} \leftarrow (1-\eta_u) u^t + \frac{\eta_u}{m}\sumpit u_i^{t+1}$. \Comment{\footnotesize Communication}
\State $c^{t+1} \leftarrow  c^t + \frac{1}{n}\sumpit (c_i^{t+1} - c_i^t)$. \Comment{\footnotesize Communication}
\EndFor
\caption{Scaffold-P Algorithm} \label{alg-scaffold}
\end{algorithmic}
\end{algorithm}

\subsection{Convergence analysis} This section provides convergence guarantees for Scaffold-P and clarifies its ability to overcome gradient dissimilarity across clients.  

\subsubsection{Supporting lemmas} Similar to the analysis of FedAvg-P, we first introduce the virtual sequence for all $t,k$ and $i \in [n]$ as follows: 
\begin{align*}
\hat u_{i,k+1}^t &= \hat u_{i,k}^t\! -\!\gamma_u (\nabla_u F(\hat u_{i,k}^t, \hat v_{i,k}^t, \xi_{i,k}^t)\! -\! c_i^t\! +\! c^t),\ \hat v_{i,k+1}^t = \hat v_{i,k}^t\! -\! \gamma_v \nabla_v F(\hat u_{i,k}^t, \hat v_{i,k}^t, \xi_{i,k}^t).
\end{align*}
At the beginning of each outer loop, we set $\hat u_{i,0}^t = u^t$ and $\hat v_{i,0}^t = v_i^t$. Therefore, the equality $\hat u_{i,k}^t = u_{i,k}^t$ and $\hat v_{i,k}^t = v_{i,k}^t$ still holds for $i\in \cS^t$ in this case. Moreover, the following sequences are defined to facilitate the analysis of Scaffold-P: 
\begin{align*}
&\alpha_{i,k}^t := \cht\hat u_{i,k}^t + (1-\cht)\alpha_{i,k}^{t-1},\ \beta_{i,k}^t := \cht\hat v_{i,k}^t + (1-\cht)\beta_{i,k}^{t-1},\\
&d_i^t := \avgk\du f_i(\alpha_{i,k}^{t-1}, \beta_{i,k}^{t-1}), \text{ and } \ d^t := \avgi d_i^t. 
\end{align*}
Furthermore, we initialize  $\alpha_{i,k}^{-1} = u^0 \text{ and } \beta_{i,k}^{-1} = v_i^0$.

We maintain the notations $T_{1,u}, T_{1,\vv}, T_{2,u}$ and $T_{2,\vv}$, as originally defined in equation \eqref{eqa-T}. Furthermore, we introduce an additional notation as follows:
\begin{equation}\label{eqa-S}
\begin{aligned}
S_{3,u} := \sumi\EE\bigg\|\sumk\bigg(\du f_i(\hat u_{i,k}^t, \hat v_{i,k}^t) - d_i^t + d^t- \avgj\du f_j(\hat u_{j,k}^t, \hat v_{j,k}^t)\bigg)\bigg\|^2.
\end{aligned}
\end{equation}

The following lemma provides an upper bound on the difference between the function values at two consecutive outer iterations for Scaffold-P.
\begin{lemma}[\sc Bounding difference in function value]
\label{lem-scaffold-step}
Under Assumptions \ref{ass-smooth} and \ref{ass-sto}, it holds that 
\begin{equation}\label{eqa-scaffold-step}
\begin{aligned}
\EE f(u^{t+1},\vv^{t+1}) - \EE f(u^t, \vv^t)
      &\le T_{1,u} + T_{1,\vv} + 2LT_{2,u} + 2LT_{2,\vv} \\
      &\quad + \frac{4L\gamma^2(n-m)}{mn^2}S_{3,u} + \frac{18LK\sigma_u^2\gamma^2}{m} +  \frac{2LK\sigma_v^2\gamma^2m}{n}.
\end{aligned}
\end{equation}
\end{lemma}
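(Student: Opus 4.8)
The plan is to mirror the proof of Lemma~\ref{lem-fedavg-step} for FedAvg-P, isolating the two places where the control variates of Scaffold-P change the bookkeeping: the replacement of $T_{3,u}$ by $S_{3,u}$ and the inflation of the $\sigma_u^2$ coefficient from $2$ to $18$. First I would apply the smoothness of $f$ (Assumption~\ref{ass-smooth}) to split $\EE f(u^{t+1},\vv^{t+1})-\EE f(u^t,\vv^t)$ into a shared-variable contribution (a linear term in $u^{t+1}-u^t$ plus a quadratic term $L\,\EE\|u^{t+1}-u^t\|^2$) and a personal-variable contribution. Since Scaffold-P updates $v_i$ exactly as FedAvg-P does, the personal part reproduces $T_{1,\vv}$, $2LT_{2,\vv}$ and the term $\frac{2LK\sigma_v^2\gamma^2m}{n}$ verbatim; all the new work is in the shared part, so I focus there.

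For the linear term $\EE\langle\nabla_u f(u^t,\vv^t),\,u^{t+1}-u^t\rangle$, I would first record the invariant $c^t=\frac1n\sum_i c_i^t$, checked by induction from the initialization and the update in line~13, using that only sampled clients modify $c_i$. Expressing $u^{t+1}-u^t$ through the virtual sequences and taking the expectation over the uniform sampling $\cS^t$ converts the sampled average into a full average $\frac1n\sum_i$; because $\frac1n\sum_{i}(-c_i^t+c^t)=0$ by the invariant, the correction terms cancel, and taking the conditional expectation over the within-loop noise (unbiasedness, Assumption~\ref{ass-sto}) collapses the linear term to exactly $T_{1,u}$, identical to FedAvg-P.

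The quadratic term $L\,\EE\|u^{t+1}-u^t\|^2$ is where Scaffold-P departs. Writing $u^{t+1}-u^t=-\frac{\gamma}{m}\sum_{i\in\cS^t}h_i$ with $h_i=\sum_k(\nabla_u F(\hat u_{i,k}^t,\hat v_{i,k}^t,\xi_{i,k}^t)-c_i^t+c^t)$, I would (i) peel off the within-loop stochastic-gradient noise using independence across $i,k$, producing the baseline variance $\tfrac{K\sigma_u^2}{m}$-type contribution that already appears in FedAvg-P; then (ii) perform the sampling mean/variance decomposition $\EE_{\cS^t}\|\frac1m\sum_{i\in\cS^t}h_i\|^2=\|\bar h\|^2+\text{(sampling variance)}$, where $\bar h=\frac1n\sum_i h_i$ and the control variates again cancel in $\bar h$ by the invariant. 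The mean part $\|\bar h\|^2$ reduces, after removing noise, to $2LT_{2,u}$, while the sampling-variance part carries the factor $\frac{n-m}{mn^2}$ and the centered quantity $h_i-\bar h=\sum_k(\nabla_u f_i-c_i^t+c^t-\frac1n\sum_j\nabla_u f_j)+\text{noise}$.

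The main obstacle is converting $\sum_i\EE\|h_i-\bar h\|^2$ into $S_{3,u}$ plus admissible noise. The centered quantity differs from the deterministic integrand of $S_{3,u}$ in two ways: the current-step residuals $\nabla_u F-\nabla_u f$, and the control-variate residuals $c_i^t-d_i^t$ and $c^t-d^t$. This second source is the new difficulty: I would bound $\EE\|c_i^t-d_i^t\|^2\le\sigma_u^2/K$ by observing that on the previous loop $c_i^t$ is an average of $K$ independent stochastic gradients whose deterministic mean is exactly $d_i^t$, matching the virtual-sequence construction with $\alpha,\beta$ for both sampled and unsampled clients. Summing the resulting $K^2\cdot\sigma_u^2/K$ bounds over $i$ and multiplying by $\frac{n-m}{mn^2}$ contributes an extra $\sigma_u^2$ term which, combined with the baseline current-noise contribution and the constants from the repeated $\|a+b\|^2\le 2\|a\|^2+2\|b\|^2$ splits, accumulates to the coefficient $18$. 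Careful tracking of these two noise channels---the current within-loop noise and the carried-over control-variate noise---while keeping the signal exactly in the form of $S_{3,u}$ is the delicate part; everything else is a direct transcription of the FedAvg-P argument.
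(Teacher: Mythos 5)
Your overall architecture coincides with the paper's: the descent step from Assumption~\ref{ass-smooth} (the paper's Lemma~\ref{lem-step}), the invariant $c^t=\frac{1}{n}\sum_i c_i^t$ used to cancel the control variates in expectation and recover $T_{1,u}$, the unchanged treatment of the $v$-block giving $T_{1,\vv}$, $2LT_{2,\vv}$ and $\frac{2LK\sigma_v^2\gamma^2 m}{n}$, the sampling mean/variance identity (the paper's Lemma~\ref{lem-id}) producing $T_{2,u}$ plus a $\frac{n-m}{mn^2}$-weighted deviation, the martingale-difference handling of the within-loop noise, and the bound $\EE\|c_i^t-d_i^t\|^2\le\sigma_u^2/K$ obtained from $c_i^t$ being a $K$-sample average whose deterministic mean is $d_i^t$ (via the $\alpha,\beta$ construction). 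All of this matches the paper's proof.

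The one substantive divergence is the \emph{order} in which you peel off the control-variate noise in $\EE\|\Delta u^t\|^2$, and as written your route does not deliver the stated constants. You apply the sampling decomposition to $h_i=\sum_k(\du f_i(\hat u_{i,k}^t,\hat v_{i,k}^t)-c_i^t+c^t)$ and only afterwards split $h_i-\bar h$ into the $S_{3,u}$ integrand plus the residuals $K(d_i^t-c_i^t)+K(c^t-d^t)$. That inner application of $\|a+b\|^2\le 2\|a\|^2+2\|b\|^2$ costs an extra factor of $2$ inside the variance channel, so you arrive at $\frac{8L\gamma^2(n-m)}{mn^2}S_{3,u}$ rather than $\frac{4L\gamma^2(n-m)}{mn^2}S_{3,u}$, and the accumulated $\sigma_u^2$ coefficient exceeds $18$ (roughly $34$ by my count). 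The cross terms between the signal and the carried-over control-variate noise do not vanish in expectation---both depend on the previous loop's randomness through $d_i^t$ and through $\hat u_{i,k}^t$---so orthogonality cannot rescue the lost factor. The paper avoids this by performing a single top-level split: it replaces $c_i^t,c^t$ by $d_i^t,d^t$ in the signal \emph{before} any squaring, collects the three noise channels $\sum_{i,k}Y_{i,k}^t$, $\sum_{i,k}\cht(c_i^t-d_i^t)$, $\sum_{i,k}\cht(c^t-d^t)$ into one term bounded by $3\cdot 3mK\sigma_u^2$ (whence the $18$), and only then applies Lemma~\ref{lem-id} to the pure deterministic signal so that $S_{3,u}$ appears with coefficient $4$. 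This discrepancy only affects constants and the downstream theorems would survive with slightly worse numbers, but to prove the lemma exactly as stated you must move the $c_i^t\to d_i^t$, $c^t\to d^t$ substitution outside the sampling decomposition.
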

The detailed proof is provided in Appendix \ref{sec-proof}. We continue to use $\cE^t$ to quantify the deviation resulting from local updates. It is worth noting that Lemma \ref{lem-T1uv} and Lemma \ref{lem-T2uv} still hold in this case. Therefore, we only need to bound $S_{3,u}$. To this end, we introduce the term $\cC^t := \sum_{i,k}\EE (\|\alpha_{i,k}^{t-1}-u^t\|^2 + \|\beta_{i,k}^{t-1}-v_i^t\|^2)$.
\begin{lemma}[\sc bounding $S_{3,u}$]\label{lem-S3u}
Under Assumptions \ref{ass-smooth}, it holds that
\begin{align*}
		&\EE S_{3,u} \le 16L^2K\cC^t + 16L^2K\cE^t.
\end{align*}
\end{lemma}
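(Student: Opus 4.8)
The plan is to rewrite $S_{3,u}$ so that the control-variate quantities $d_i^t$ and $d^t$ merge into the summation over local steps, reducing the expression to gradient differences that can then be controlled by smoothness together with the deviation quantities $\cE^t$ and $\cC^t$.

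First I would exploit that $d_i^t = \avgk \du f_i(\alpha_{i,k}^{t-1}, \beta_{i,k}^{t-1})$ does not depend on the step index $k$, so that $\sumk d_i^t = \sumk \du f_i(\alpha_{i,k}^{t-1}, \beta_{i,k}^{t-1})$ and likewise $\sumk d^t = \avgj \sumk \du f_j(\alpha_{j,k}^{t-1}, \beta_{j,k}^{t-1})$. Substituting these identities into \eqref{eqa-S} and abbreviating $\Delta_{i,k} := \du f_i(\hat u_{i,k}^t, \hat v_{i,k}^t) - \du f_i(\alpha_{i,k}^{t-1}, \beta_{i,k}^{t-1})$, the vector inside each norm collapses to the centered quantity $\sumk(\Delta_{i,k} - \avgj \Delta_{j,k})$, so that
\begin{align*}
S_{3,u} = \sumi \EE\bigg\|\sumk\Big(\Delta_{i,k} - \avgj \Delta_{j,k}\Big)\bigg\|^2.
\end{align*}
This algebraic collapse is the one step that genuinely uses the structure of the bias correction, and I expect it to be the main (though not computationally hard) obstacle: one must verify carefully that the $-d_i^t + d^t$ corrections precisely reproduce the $\alpha,\beta$-evaluated gradients after the sum over $k$.

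Next I would remove the averaging term at the cost of an absolute constant. Writing $y_i := \sumk \Delta_{i,k}$, the bound $\|y_i - \avgj y_j\|^2 \le 2\|y_i\|^2 + 2\|\avgj y_j\|^2$ combined with $\sumi\|\avgj y_j\|^2 = n\|\avgj y_j\|^2 \le \sumi\|y_i\|^2$ (Jensen) yields $S_{3,u} \le 4\sumi \EE\|y_i\|^2$. Applying the mean inequality $\|\sumk \Delta_{i,k}\|^2 \le K\sumk\|\Delta_{i,k}\|^2$ then gives $S_{3,u} \le 4K\sumik \EE\|\Delta_{i,k}\|^2$.

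Finally I would bound each $\EE\|\Delta_{i,k}\|^2$ by smoothness. Assumption \ref{ass-smooth} gives $\|\Delta_{i,k}\|^2 \le 2L^2(\|\hat u_{i,k}^t - \alpha_{i,k}^{t-1}\|^2 + \|\hat v_{i,k}^t - \beta_{i,k}^{t-1}\|^2)$, and inserting the anchors $u^t$ and $v_i^t$ via the triangle inequality splits each term into a contribution measured from the virtual iterate and one from the $\alpha,\beta$ sequence. Summing over $i,k$ and recognizing the definitions of $\cE^t$ and $\cC^t$ produces $\sumik\EE\|\Delta_{i,k}\|^2 \le 4L^2(\cE^t + \cC^t)$, which combined with the previous estimate yields exactly $S_{3,u} \le 16L^2 K\cC^t + 16L^2 K\cE^t$.
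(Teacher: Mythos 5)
Your proposal is correct and follows essentially the same route as the paper's proof: both exploit the identity $K d_i^t = \sum_k \nabla_u f_i(\alpha_{i,k}^{t-1},\beta_{i,k}^{t-1})$ to reduce $S_{3,u}$ to sums of gradient differences, remove the client average at the cost of a factor $4$, apply the mean inequality over $k$, and finish with smoothness anchored at $(u^t, v_i^t)$, arriving at the identical constant $16L^2K$. The only cosmetic differences are the order of operations (you collapse $d_i^t - d^t$ first and then split off the average, whereas the paper splits first; you anchor the iterates before applying Lipschitzness, whereas the paper anchors at the gradient level), and neither changes the argument.
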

\begin{proof}
By applying the mean inequality several times and finally Assumption \ref{ass-smooth}, we have
\begin{align*}
S_{3,u}
&\le \sumi2\EE\bigg\|\sumk\left(\du f_i(\hat u_{i,k}^t, \hat v_{i,k}^t) - d_i^t\right)\bigg\|^2 + \frac{2}{n}\EE\bigg\|\sum_{i,k}\left(\du f_i(\hat u_{i,k}^t, \hat v_{i,k}^t) - d^t\right)\bigg\|^2\\
&\le 4K\sumik\EE\left( \begin{array}{l}
    2\left\|\du f_i(u^t, \vv^t) - \du f_i(\alpha_{i,k}^{t-1}, \beta_{i,k}^{t-1})\right\|^2 \\
    + 2\left\|\du f_i(\hat u_{i,k}^t, \hat v_{i,k}^t) - \du f_i(u^t, \vv^t)\right\|^2 
\end{array} \right) \\
&\le 16L^2K\sumik \EE \left(\|\alpha_{i,k}^{t-1}-u^t\|^2 \!+\! \|\beta_{i,k}^{t-1}-v_i^t\|^2 + \|\hat u_{i,k}^{t}-u^t\|^2 \!+\! \|\hat v_{i,k}^{t}-v_i^t\|^2\right).
\end{align*}
\end{proof}

The upper bound on $\cE^t$ is established for Scaffold-P in Lemma~\ref{lem-scaffold-e}, whose proof is similar to that of Lemma \ref{lem-fedavg-e} for FedAvg-P, and can be found in Appendix~\ref{sec-proof}.
\begin{lemma}[\sc bounding $\cE^t$]\label{lem-scaffold-e}
Under Assumptions \ref{ass-smooth} and \ref{ass-sto}, by setting the step sizes $\gamma_u, \gamma_v$ such that $12L^2K(K-1)(\gamma_u^2 + \gamma_v^2) \le 1$, we have
\begin{align*}
    \cE^t &\le 96L^2K^2\gamma_u^2\cC^t + 24nK^3\gamma_u^2\cG_u^t + 8nK^3\gamma_v^2\cG_{\vv}^t +52nK^2(\sigma_u^2\gamma_u^2+\sigma_v^2\gamma_v^2).
\end{align*}
\end{lemma}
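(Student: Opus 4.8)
The plan is to follow the proof of Lemma~\ref{lem-fedavg-e} for FedAvg-P, carrying the additional control-variate correction $-c_i^t + c^t$ through the recursion for the local drift. The case $K=1$ gives $\cE^t = 0$ at once, so I focus on $K \ge 2$ and set up a one-step recursion for $\cE_{i,k}^t := \EE(\|\hat u_{i,k}^t - u^t\|^2 + \|\hat v_{i,k}^t - v_i^t\|^2)$. The $\hat v$-coordinate is updated exactly as in FedAvg-P (no correction term), so its recursion is unchanged and will contribute the $\cG_{\vv}^t$ and $\sigma_v^2\gamma_v^2$ terms. For the $\hat u$-coordinate, I first peel off the stochasticity of the current round: because $c_i^t$ and $c^t$ are measurable with respect to the history before round $t$ and are independent of $\xi_{i,k}^t$, separating mean and variance gives $\EE\|\hat u_{i,k+1}^t - u^t\|^2 \le \EE\|\hat u_{i,k}^t - \gamma_u(\du f_i(\hat u_{i,k}^t, \hat v_{i,k}^t) - c_i^t + c^t) - u^t\|^2 + \sigma_u^2\gamma_u^2$, and then the inequality $2\langle a,b\rangle \le \kappa\|a\|^2 + \frac1\kappa\|b\|^2$ with $\kappa = K-1$ (as in Lemma~\ref{lem-fedavg-e}) isolates the mean drift $\gamma_u(\du f_i(\hat u_{i,k}^t, \hat v_{i,k}^t) - c_i^t + c^t)$ with a coefficient $K\gamma_u^2$.

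The crux is to bound $\EE\|\du f_i(\hat u_{i,k}^t, \hat v_{i,k}^t) - c_i^t + c^t\|^2$ without reintroducing the per-client gradient quantity $\cB^t$ that appears in the FedAvg-P bound. I first replace $c_i^t, c^t$ by their deterministic proxies $d_i^t, d^t$ at the cost of the control-variate noise $\EE\|c_i^t - d_i^t\|^2 \le \sigma_u^2/K$ and its client-averaged analogue for $c^t - d^t$, each arising from averaging $K$ independent stochastic gradients in the previous round. I then use the decomposition
\begin{align*}
\du f_i(\hat u_{i,k}^t, \hat v_{i,k}^t) - d_i^t + d^t
&= \big[\du f_i(\hat u_{i,k}^t, \hat v_{i,k}^t) - \du f_i(u^t, v_i^t)\big] + \big[\du f_i(u^t, v_i^t) - d_i^t\big] + d^t.
\end{align*}
By Assumption~\ref{ass-smooth}, the first bracket is controlled by $L^2(\|\hat u_{i,k}^t - u^t\|^2 + \|\hat v_{i,k}^t - v_i^t\|^2)$, feeding back into the $\cE$-recursion. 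Since $d_i^t = \avgk \du f_i(\alpha_{i,k}^{t-1}, \beta_{i,k}^{t-1})$, the second bracket equals $\avgk[\du f_i(u^t, v_i^t) - \du f_i(\alpha_{i,k}^{t-1}, \beta_{i,k}^{t-1})]$ and is bounded, via Jensen and smoothness, by $L^2$ times a $\cC^t$-type deviation. Finally, writing $d^t = \du f(u^t, \vv^t) + \avgj\avgk[\du f_j(\alpha_{j,k}^{t-1}, \beta_{j,k}^{t-1}) - \du f_j(u^t, v_j^t)]$, the last term yields the global-gradient quantity $\cG_u^t$ together with one more $\cC^t$-type deviation. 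The essential point is that the per-client gradient $\du f_i(u^t, v_i^t)$ supplied by $d_i^t$ is absorbed into a difference against the tracking sequence $(\alpha, \beta)$, while the client-averaged proxy $d^t$ converts into the global gradient, so no $\cB^t$ survives.

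Substituting these bounds yields a recursion of the form $\cE_{i,k+1}^t \le \big(1 + \frac{1}{K-1} + c L^2 K(\gamma_u^2 + \gamma_v^2)\big)\cE_{i,k}^t + (\text{terms in } \cC^t, \cG_u^t, \cG_{\vv}^t \text{ and variance})$, where the larger constant $c$ (relative to FedAvg-P) reflects the extra pieces of the decomposition and forces the sharper step-size requirement $12L^2K(K-1)(\gamma_u^2 + \gamma_v^2) \le 1$, ensuring the multiplicative factor is at most $1 + \frac{2}{K-1}$. Iterating the recursion $k$ times, using $\sum_{r=0}^{k-1}(1+\frac{2}{K-1})^r \le \frac{K-1}{2}(1+\frac{2}{K-1})^{K-1}$ with $1+x \le e^x$ exactly as in Lemma~\ref{lem-fedavg-e}, and then summing over $i \in [n]$ and $k = 0,\dots,K-1$ produces the stated bound; the extra factor of $K$ from the outer summation and the $\frac{K-1}{2}$ geometric factor combine to give the $K^2$ and $K^3$ scalings in the four coefficients.

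I expect the main obstacle to be the bookkeeping of the control variates: justifying that $c_i^t, c^t$ may be swapped for $d_i^t, d^t$ with only an $O(\sigma_u^2/K)$ variance penalty (which requires the precise martingale structure of the $\alpha, \beta$ and $c$ sequences across rounds), and then tracking the mean-inequality constants through the four-way split so that they collapse to exactly $96$, $24$, $8$, and $52$ without any residual per-client gradient term. Once the drift bound is in place, the remainder is a routine adaptation of the FedAvg-P argument.
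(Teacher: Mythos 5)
Your proposal is correct and follows essentially the same route as the paper's proof: the same mean--variance separation with the $(1+\frac{1}{K-1})$ trick, the same six-way decomposition of $\du f_i(\hat u_{i,k}^t,\hat v_{i,k}^t)-c_i^t+c^t$ (smoothness term feeding back into $\cE$, two $\cC^t$-type deviations against the tracking sequences, the global gradient $\cG_u^t$, and the two martingale-variance terms $\EE\|c_i^t-d_i^t\|^2\le\sigma_u^2/K$ and $\EE\|c^t-d^t\|^2\le\sigma_u^2/(nK)$), an unchanged $v$-recursion, and the identical geometric-sum iteration. Your key structural observation---that the per-client gradient supplied by $d_i^t$ is absorbed into differences against $(\alpha,\beta)$ so that no $\cB^t$ survives---is exactly what the paper's argument delivers.
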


We have so far bounded all the terms on the right-hand side of \eqref{eqa-scaffold-step} utilizing $\cG_u^t$, $\cG_{\vv}^t$, and $\cC^t$. Now, we   show that $\cC^t$ exhibits a diminishing property that allows it to be bounded in Lemma~\ref{lem-scaffold-c}, whose proof can be found in Appendix \ref{sec-proof}.
\begin{lemma}[\sc bounding $\cC^t$]\label{lem-scaffold-c}
Under Assumptions \ref{ass-smooth} and \ref{ass-sto}, by setting the step sizes $\eta_u \ge \sqrt{m}, \eta_v \ge \sqrt{\frac{n}{m}}$, and $\gamma \le \frac{1}{72LK} \min(\frac{m}{n^{2/3}},1)$, we have
\begin{equation}\label{eqa-scaffold-c}
\begin{aligned}
    \cC^t &\le \Big(1-\frac{m}{4n}\Big)\cC^{t-1} + \frac{mn^{2/3}K}{36L^2}(\cG_u^{t-1} + \hat \cG_{\vv}^{t-1}) + \frac{m^2}{18L^2}\Big(\frac{\sigma_u^2}{m} + \frac{m\sigma_v^2}{n}\Big).
\end{aligned}
\end{equation}
\end{lemma}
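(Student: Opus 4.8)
The plan is to derive the one-step recursion by decomposing $\cC^t$ according to whether each client was sampled in the previous round, i.e.\ using the indicator $\chto$. Since $\alpha_{i,k}^{t-1} = \chto\,\hat u_{i,k}^{t-1} + (1-\chto)\alpha_{i,k}^{t-2}$ and analogously for $\beta$, and since $\chto^2=\chto$, $(1-\chto)^2=1-\chto$ and $\chto(1-\chto)=0$, each squared norm splits cleanly into a ``sampled'' piece and an ``unsampled'' piece with no cross term. First I would treat the unsampled pieces. For the personal variables this is easiest: when $i\notin\cS^{t-1}$ we have $v_i^t=v_i^{t-1}$ and $\beta_{i,k}^{t-1}=\beta_{i,k}^{t-2}$, so the term is exactly $(1-\chto)\|\beta_{i,k}^{t-2}-v_i^{t-1}\|^2$, which in expectation contributes $(1-\frac mn)$ times the personal part of $\cC^{t-1}$ because $\chto$ is independent of everything measurable before round $t-1$. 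For the shared variable the unsampled piece still sees a moving target $u^t\ne u^{t-1}$, so I would insert $u^{t-1}$ and apply Young's inequality $\|a+b\|^2\le(1+\rho)\|a\|^2+(1+\rho^{-1})\|b\|^2$ with $\rho\asymp m/n$; the resulting factor $(1+\rho)(1-\frac mn)$ can be forced below $1-\frac{m}{4n}$ by the choice of $\rho$, at the cost of a $(1+\rho^{-1})\asymp n/m$ multiple of the drift $\EE\|u^{t-1}-u^t\|^2$.

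Next I would dispose of the sampled pieces. Here $\alpha_{i,k}^{t-1}=\hat u_{i,k}^{t-1}$ and $\beta_{i,k}^{t-1}=\hat v_{i,k}^{t-1}$, so I again split $\hat u_{i,k}^{t-1}-u^t=(\hat u_{i,k}^{t-1}-u^{t-1})+(u^{t-1}-u^t)$ and likewise for $v$. The deviation parts $\sum_{i,k}\EE[\chto\|\hat u_{i,k}^{t-1}-u^{t-1}\|^2+\chto\|\hat v_{i,k}^{t-1}-v_i^{t-1}\|^2]$ equal $\frac mn\cE^{t-1}$ by the same independence argument, hence are controlled through Lemma~\ref{lem-scaffold-e}; the outer-step parts again produce multiples of $\EE\|u^{t-1}-u^t\|^2$ and $\sum_i\EE\|v_i^{t-1}-v_i^t\|^2$. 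The personal drift is immediate because $v_i^t-v_i^{t-1}=\chto\,\eta_v(\hat v_{i,K}^{t-1}-v_i^{t-1})$, so $\sum_i\EE\|v_i^t-v_i^{t-1}\|^2=\eta_v^2\frac mn\sum_i\EE\|\hat v_{i,K}^{t-1}-v_i^{t-1}\|^2$, which is again a piece of $\cE^{t-1}$; note $\eta_v^2\frac mn$ stays bounded under $\eta_v\ge\sqrt{n/m}$.

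The crux, and the step I expect to be the main obstacle, is bounding the shared-variable drift $\EE\|u^t-u^{t-1}\|^2$, because it is where the Scaffold control variates enter. Writing $u^t-u^{t-1}=-\frac{\eta_u\gamma_u}{m}\sum_{i\in\cS^{t-1}}\sum_k\big(\nabla_u F(\hat u_{i,k}^{t-1},\hat v_{i,k}^{t-1},\xi_{i,k}^{t-1})-c_i^{t-1}+c^{t-1}\big)$, I would use the key identity $c_i^{t}=\frac1K\sum_k\nabla_u F(\hat u_{i,k}^{t-1},\hat v_{i,k}^{t-1},\xi_{i,k}^{t-1})$ (obtained by unrolling line~10 with $u_i^{t+1}=\hat u_{i,K}^{t}$) together with its conditional mean $\EE[c_i^t]=d_i^t$, and decompose the bracket into (i) a signal $\nabla_u f_i(\hat u_{i,k}^{t-1},\hat v_{i,k}^{t-1})-d_i^{t-1}+d^{t-1}$ whose sampled, mean-subtracted average is exactly an $S_{3,u}$-type quantity and is therefore handled by smoothness in terms of $\cG_u^{t-1}$, $\cE^{t-1}$, $\cC^{t-1}$; (ii) fresh gradient noise of round $t-1$; and (iii) the control-variate staleness $c_i^{t-1}-d_i^{t-1}$. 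The delicate points are the sampling-without-replacement variance identity for $\|\sum_{i\in\cS^{t-1}}(\cdot)\|^2$, which supplies both the $1/m$ scaling of the variance and the partial cancellation of off-diagonal terms, and the fact that the prefactor $\eta_u^2\gamma_u^2/m^2=\gamma^2/m^2$ combined with $\eta_u\ge\sqrt m$ converts these into the advertised $\sigma_u^2/m$-type noise and $\cG$-type signal contributions.

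Finally I would substitute the Lemma~\ref{lem-scaffold-e} bound on $\cE^{t-1}$ to express everything through $\cC^{t-1}$, $\cG_u^{t-1}$, $\hat\cG_{\vv}^{t-1}$, $\sigma_u^2$ and $\sigma_v^2$, and then impose $\eta_u\ge\sqrt m$, $\eta_v\ge\sqrt{n/m}$ and $\gamma\le\frac{1}{72LK}\min(\frac{m}{n^{2/3}},1)$. The step-size ceiling is precisely what is needed to (a) absorb every residual multiple of $\cC^{t-1}$ into the contraction factor $1-\frac{m}{4n}$, and (b) drive the gradient coefficient down to $\frac{mn^{2/3}K}{36L^2}$ and the variance coefficient down to $\frac{m^2}{18L^2}\big(\frac{\sigma_u^2}{m}+\frac{m\sigma_v^2}{n}\big)$; the unusual $n^{2/3}$ exponent arises from balancing the $n/m$ blow-up of the Young constant $\rho^{-1}$ in the drift term against the $m/n$ contraction through the $\gamma\le\frac{m}{72LK\,n^{2/3}}$ branch.
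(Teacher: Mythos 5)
Your overall architecture matches the paper's: decompose $\cC^t$ via the indicator $\chto$ (cross terms vanish since $\chto(1-\chto)=0$), extract a $(1-\frac{m}{n})$-type contraction from the unsampled piece, pay a drift term $\|u^t-u^{t-1}\|^2$ for the moving anchor, reuse the bounds on $\EE\|\Delta u^{t-1}\|^2$ and $\EE\|\Delta v_i^{t-1}\|^2$ already derived for Lemma \ref{lem-scaffold-step} (including the control-variate staleness terms and the sampling-without-replacement identity), and finally absorb $\cE^{t-1}$ via Lemma \ref{lem-scaffold-e} under the step-size ceiling. The paper does all of this too, and your identification of where the $n^{2/3}$ comes from is essentially right.

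However, there is one genuine gap at the step you treat as routine: you propose to handle the cross term between $\alpha_{i,k}^{t-2}-u^{t-1}$ and the drift by Young's inequality with $\rho\asymp m/n$, paying a $(1+\rho^{-1})\asymp n/m$ multiple of the \emph{full second moment} $\EE\|\Delta u^{t-1}\|^2$. That second moment contains the stochastic-gradient noise $O(\frac{K\sigma_u^2\gamma^2}{m})$, so after multiplying by $\frac{n}{m}$ and by the $nK$ terms in the sum, the $\sigma_u^2$ coefficient in the recursion becomes $O(\frac{n^2K^2\gamma^2}{m^2}\sigma_u^2)$, which under $\gamma\le\frac{m}{72LKn^{2/3}}$ evaluates to $O(\frac{n^{2/3}}{L^2}\sigma_u^2)$ rather than the claimed $\frac{m}{18L^2}\sigma_u^2$; when $m\ll n^{2/3}$ this overshoots the stated bound (and would destroy the $\frac{\sigma_u^2}{m}$ scaling downstream in Theorem \ref{thm-scaffold}). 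The paper avoids this by expanding the square exactly, keeping $\|\Delta u^{t-1}\|^2$ with an $O(1)$ coefficient, and applying Young only to the cross term \emph{after} conditioning on $\cF^{t-1}$: since $\alpha_{i,k}^{t-2}-u^{t-1}$ is $\cF^{t-1}$-measurable, only $\EE\|\EE(\Delta u^{t-1}\mid\cF^{t-1})\|^2$ picks up the $\frac{n}{m}$ factor, and by Jensen this squared conditional mean is bounded by $T_{2,u}$, which is noise-free (it contributes only $\cG_u^{t-1}$ and $\cE^{t-1}$ terms). The same remark applies to the personal drift. Without this conditional-mean/Jensen step your argument proves a strictly weaker recursion, so you need to replace the plain Young bound with the paper's conditioned version.
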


\subsection{Convergence rate} With the above supporting lemmas, we are ready to establish the convergence rate of Scaffold-P with partial client participation. 
\begin{theorem}[\sc Convergence with partial participation]
\label{thm-scaffold}
Under Assumptions \ref{ass-smooth} and \ref{ass-sto}, if the global step sizes satisfy $\eta_u \ge \sqrt{m}, \eta_v \ge \sqrt{\frac{n}{m}}$ and $\gamma \le \frac{1}{72LK}\min(\frac{m}{n^{2/3}}, 1)$, it holds that
\begin{align*}
    \frac{1}{T}\sum\limits_{t=0}^{T-1}(\cG_u^t + \hat \cG_{\vv}^t) &\le \frac{4F_0}{TK\gamma} + 148\Big(\frac{\sigma_u^2}{m}+\frac{\sigma_v^2m}{n}\Big)L\gamma.
\end{align*}
\end{theorem}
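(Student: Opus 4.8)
The plan is to mirror the proof of Theorem~\ref{thm-fedavg}, with the control-variate drift $\cC^t$ taking over the role played there by the bounded-dissimilarity term $b^2$, and then to exploit the contraction in Lemma~\ref{lem-scaffold-c} so that $\cC^t$ is accumulated across iterations rather than bounded term by term.

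First I would substitute Lemmas~\ref{lem-T1uv}, \ref{lem-T2uv}, and \ref{lem-S3u} into the one-step descent of Lemma~\ref{lem-scaffold-step}, using $\hat\cG_{\vv}^t=\frac{m}{n}\cG_{\vv}^t$ to merge the shared and personal terms. This produces a negative descent term $(-\tfrac{K\gamma}{2}+4LK^2\gamma^2)(\cG_u^t+\hat\cG_{\vv}^t)$, a multiple of $\cE^t$, a multiple of $\cC^t$ arising from the $16L^2K\cC^t$ piece of $S_{3,u}$, and the stochastic noise $\frac{18LK\sigma_u^2\gamma^2}{m}+\frac{2LK\sigma_v^2\gamma^2 m}{n}$. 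Since $m\le n$ and $72LK\gamma\le 1$, the aggregate coefficient of $\cE^t$ collapses to a small multiple of $\frac{L^2\gamma}{n}$, exactly as in \eqref{eqa-fedavg-step1}.

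Next I would insert the $\cE^t$ bound of Lemma~\ref{lem-scaffold-e} and invoke the step-size relations $\eta_u\ge\sqrt{m}$, $\eta_v\ge\sqrt{n/m}$ (so $\gamma_u^2\le\gamma^2/m$ and $\gamma_v^2\le m\gamma^2/n$) together with $72LK\gamma\le 1$. The $\cG_u^t$ and $\cG_{\vv}^t$ pieces of $\cE^t$ are absorbed into the descent term, leaving a coefficient of $\cG_u^t+\hat\cG_{\vv}^t$ of the form $-\tfrac{K\gamma}{2}+O(LK^2\gamma^2)$, which stays negative once $LK\gamma$ is small; the variance pieces contribute $O\!\big(LK\gamma^2(\frac{\sigma_u^2}{m}+\frac{m\sigma_v^2}{n})\big)$; and the $96L^2K^2\gamma_u^2\cC^t$ piece, combined with the direct $\cC^t$ term from $S_{3,u}$, leaves a residual $\cC^t$-coefficient $D\lesssim\frac{L^3K\gamma^2}{mn}$.

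The crux is the final summation. Telescoping $\EE[f(u^t)-f(u^{t+1})]$ over $t=0,\dots,T-1$ gives the $\frac{F_0}{TK\gamma}$ contribution, and I would accumulate $\cC^t$ through its recursion. Because $\cC^0=0$ (the sequences $\alpha_{i,k}^{-1},\beta_{i,k}^{-1}$ are initialized at $u^0,v_i^0$) and the contraction factor in Lemma~\ref{lem-scaffold-c} is $1-\frac{m}{4n}$, summing the geometric recursion yields $\sum_t\cC^t\le\frac{4n}{m}\big(\frac{mn^{2/3}K}{36L^2}\sum_t(\cG_u^t+\hat\cG_{\vv}^t)+\frac{m^2T}{18L^2}(\frac{\sigma_u^2}{m}+\frac{m\sigma_v^2}{n})\big)$. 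Multiplying by $D\lesssim\frac{L^3K\gamma^2}{mn}$, the drift feeds back into $\sum_t(\cG_u^t+\hat\cG_{\vv}^t)$ with weight $O\!\big(LK^2\gamma^2\frac{n^{2/3}}{m}\big)$, which the step-size constraint $\gamma\le\frac{1}{72LK}\frac{m}{n^{2/3}}$ forces below a small constant times $K\gamma$. This is precisely where the $n^{2/3}$ factor in the hypothesis is consumed, and it is the main obstacle: the constants must be tracked carefully so that, after absorbing both the $\cC^t$ and $\cE^t$ feedback into $-\tfrac{K\gamma}{2}(\cG_u^t+\hat\cG_{\vv}^t)$, the surviving effective coefficient is at least $\tfrac{K\gamma}{4}$ and all variance contributions consolidate into the stated constant $148$ multiplying $L(\frac{\sigma_u^2}{m}+\frac{m\sigma_v^2}{n})\gamma$.
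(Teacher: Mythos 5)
Your proposal is correct and follows essentially the same route as the paper's proof: the same substitution of Lemmas \ref{lem-T1uv}, \ref{lem-T2uv}, and \ref{lem-S3u} into Lemma \ref{lem-scaffold-step}, the same absorption of $\cE^t$ via Lemma \ref{lem-scaffold-e} under the step-size relations, and the same geometric summation of the $\cC^t$ recursion from Lemma \ref{lem-scaffold-c} (using $\cC^0=0$), with the $n^{2/3}$ factor in the step-size bound consumed exactly where you say it is. The only remaining work is the constant tracking you already flag, which proceeds as outlined.
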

\begin{proof} 
We begin with applying Lemmas \ref{lem-T1uv}, \ref{lem-T2uv}, and \ref{lem-S3u} to Lemma \ref{lem-scaffold-step} to obtain the following inequality:
\begin{equation}\label{eqa-scaffold-step1}
\begin{aligned}
\EE f(u^{t+1},\vv^{t+1}) 
&\le \EE f(u^t,\vv^t) + \Big(-\frac{K\gamma}{2}+4LK^2\gamma^2\Big)(\cG_u^t+\hat\cG_{\vv}^t)\\
&\quad +\Big(\frac{L^2\gamma}{n}+\frac{72L^3K\gamma^2}{n}+\frac{L^2\gamma m}{n^2}+\frac{8L^3K\gamma^2m}{n^2}\Big)\cE^t\\
&\quad + \frac{64L^3K\gamma^2}{mn}\cC^t + \frac{18LK\sigma_u^2\gamma^2}{m} + \frac{2LK\sigma_v^2\gamma^2m}{n}\\
&\le \EE f(u^t,\vv^t) + \Big(-\frac{K\gamma}{2}+4LK^2\gamma^2\Big)(\cG_u^t+\hat\cG_{\vv}^t)\\
&\quad +\frac{4L^2\gamma}{n}\cE^t + \frac{64L^3K\gamma^2}{mn}\cC^t + \frac{18LK\sigma_u^2\gamma^2}{m} + \frac{2LK\sigma_v^2\gamma^2m}{n}, 
\end{aligned}
\end{equation}
where the final inequality holds since $72LK\gamma \le 1$ and $m\le n$. By Lemma \ref{lem-scaffold-e}, we have
\begin{align*}
    \cE^t &\le 96L^2K^2\gamma_u^2\cC^t + 24nK^3\gamma_u^2\cG_u^t + 8nK^3\gamma_v^2\cG_{\vv}^t +52nK^2(\sigma_u^2\gamma_u^2+\sigma_v^2\gamma_v^2)\\
    &\le  \frac{96L^2K^2\gamma^2}{\eta_u^2}\cC^t + 24nK^3\gamma^2\Big(\frac{1}{\eta_u^2}\cG_u^t + \frac{1}{\eta_v^2}\cG_{\vv}^t\Big) +52nK^2\gamma^2\Big(\frac{\sigma_u^2}{\eta_u^2}+\frac{\sigma_v^2}{\eta_v^2}\Big)\\
    &\le \frac{2LK\gamma}{m}\cC^t + \frac{nK^2\gamma}{2L}\big(\cG_u^t + \hat \cG_{\vv}^t\big) + \frac{3nK\gamma}{4L}\Big(\frac{\sigma_u^2}{m}+\frac{m\sigma_v^2}{n}\Big),
\end{align*}
where the last inequality derives from $72LK\gamma \le 1, \eta_u \ge \sqrt{m}$, and $\eta_v \ge \sqrt{\frac{n}{m}}$. Together with \eqref{eqa-scaffold-step1}, we obtain
\begin{align*}
\EE f(u^{t+1},\vv^{t+1})
&\le \EE f(u^t,\vv^t) + \Big(-\frac{K\gamma}{2}+ 6LK^2\gamma^2\Big)(\cG_u^t + \hat \cG_{\vv}^t) \\
&\quad + \frac{72L^3K\gamma^2}{mn}\cC^t + \frac{21LK\sigma_u^2\gamma^2}{m} + \frac{21LK\sigma_v^2\gamma^2m}{n}.
\end{align*}
This inequality can be rewritten as
\begin{align*}
    \Big(\frac{1}{2}-6LK\gamma\Big)(\cG_u^t + \hat\cG_{\vv}^t) 
    \le{}& \frac{\EE f(u^t, \vv^t) - \EE f(u^{t+1},\vv^{t+1})}{K\gamma}\\
    & + 21\Big(\frac{\sigma_u^2}{m} +\frac{m\sigma_v^2}{n}\Big)L\gamma + \frac{72L^3\gamma}{mn}\cC^t.
\end{align*}
By averaging it over $t=0,1,\dots,T-1$, we obtain the following inequality:
\begin{equation}\label{eqa-scaffold-step2}
\begin{aligned}
    \Big(\frac{1}{2} \hspace{-0.5mm} - \hspace{-0.5mm} 6LK\gamma\Big)\frac{1}{T}\sum\limits_{t=0}^{T-1}(\cG_u^t \hspace{-0.5mm}+\hspace{-0.5mm} \cG_{\vv}^t) &\le \frac{F_0}{TK\gamma} \hspace{-0.5mm}+\hspace{-0.5mm} 21\Big(\frac{\sigma_u^2}{m}+\frac{m\sigma_v^2}{n}\Big)L\gamma  + \frac{72L^3\gamma}{mnT}\sum\limits_{t=0}^{T-1}\cC^t.
\end{aligned}
\end{equation}
Recalling Lemma \ref{lem-scaffold-c} and employing deductive reasoning, we have
\begin{align*}
\frac{m}{4n}\sum\limits_{t=0}^{T-1}\cC^t &\le \left[ \sum\limits_{t=0}^{T-1} \frac{mn^{2/3}K}{36L^2}(\cG_u^{t} + \hat \cG_{\vv}^{t}) \right] + \frac{m^2T}{18L^2}\Big(\frac{\sigma_u^2}{m} + \frac{m\sigma_v^2}{n}\Big).
\end{align*}
Combining it with \eqref{eqa-scaffold-step2} yields the following inequality:
\begin{align*}
    \Big(\frac{1}{2}-6LK\gamma - \frac{8LK\gamma n^{2/3}}{m}\Big)\frac{1}{T}\sum\limits_{t=0}^{T-1}(\cG_u^t + \cG_{\vv}^t) &\le \frac{F_0}{TK\gamma} + 37\Big(\frac{\sigma_u^2}{m}+\frac{m\sigma_v^2}{n}\Big)L\gamma.
\end{align*}
By utilizing $6LK\gamma \le \frac{1}{8}, 8LK\gamma \le \frac{m}{8n^{2/3}}$, we complete the proof.
\end{proof}

With Theorem \ref{thm-scaffold}, we can obtain the following convergence rate for the Scaffold-P 
algorithm by selecting proper local and global step sizes.
\begin{corollary}[\sc Convergence rate]
\label{cor-scaffold-rate}
 Under the same assumptions as  Theorem \ref{thm-scaffold}, by setting $\gamma = \Big(72LK\max(\frac{n^{2/3}}{m}, 1) + \sqrt{\frac{37LKT}{F_0}(\frac{\sigma_u^2}{m}+\frac{m\sigma_v^2}{n})}\Big)^{-1}$, $\eta_u \ge \sqrt{m}$ and $\eta_v \ge \sqrt{\frac{n}{m}}$, it holds that
\begin{align*}
    \avgt(\cG_u^t+\hat\cG_{\vv}^t)&\le 8\sqrt{\frac{37LF_0}{KT} \Big(\frac{\sigma_u^2}{m}+\frac{\sigma_v^2m}{n}\Big)}+ \frac{288LF_0}{T}\max\Big(1, \frac{n^{2/3}}{m}\Big). 
\end{align*}
\end{corollary}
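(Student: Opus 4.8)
The plan is to obtain Corollary~\ref{cor-scaffold-rate} directly from Theorem~\ref{thm-scaffold} by the standard step-size balancing argument; no further estimates on the iterates are required. First I would abbreviate the variance quantity as $V := \frac{\sigma_u^2}{m} + \frac{\sigma_v^2 m}{n}$ and the admissible upper bound as $\gamma_{\max} := \frac{1}{72LK}\min(\frac{m}{n^{2/3}},1) = \frac{1}{72LK\max(n^{2/3}/m,\,1)}$ (these two expressions coincide because $\min(m/n^{2/3},1)$ and $\max(n^{2/3}/m,1)$ are reciprocals). With this notation the conclusion of Theorem~\ref{thm-scaffold} reads
\[
 \frac{1}{T}\sum_{t=0}^{T-1}(\cG_u^t + \hat\cG_{\vv}^t) \;\le\; \frac{c_1}{\gamma} + c_2\gamma \quad\text{for every } \gamma \le \gamma_{\max},
\]
where $c_1 := \frac{4F_0}{TK}$ and $c_2 := 148\,LV$.

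Next I would check that the prescribed step size is admissible and recognize its structure. Computing $\sqrt{c_2/c_1} = \sqrt{148\,LV\,TK/(4F_0)} = \sqrt{37\,LKTV/F_0} = \sqrt{\frac{37LKT}{F_0}V}$, the step size in the corollary is exactly
\[
 \gamma \;=\; \Big(\tfrac{1}{\gamma_{\max}} + \sqrt{c_2/c_1}\Big)^{-1} \;=\; \Big(72LK\max(\tfrac{n^{2/3}}{m},1) + \sqrt{\tfrac{37LKT}{F_0}V}\Big)^{-1}.
\]
Since the denominator exceeds $1/\gamma_{\max}$ by the nonnegative amount $\sqrt{c_2/c_1}$, we have $\gamma \le \gamma_{\max}$, so the hypotheses of Theorem~\ref{thm-scaffold} are met and the displayed bound applies at this $\gamma$.

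The balancing is then mechanical. For this $\gamma$ one has $\frac{c_1}{\gamma} = \frac{c_1}{\gamma_{\max}} + c_1\sqrt{c_2/c_1} = \frac{c_1}{\gamma_{\max}} + \sqrt{c_1 c_2}$, while $\gamma \le \sqrt{c_1/c_2}$ gives $c_2\gamma \le \sqrt{c_1 c_2}$; adding these yields $\frac{1}{T}\sum_{t=0}^{T-1}(\cG_u^t + \hat\cG_{\vv}^t) \le \frac{c_1}{\gamma_{\max}} + 2\sqrt{c_1 c_2}$. It remains to substitute the explicit constants: $\frac{c_1}{\gamma_{\max}} = \frac{4F_0}{TK}\cdot 72LK\max(1,\tfrac{n^{2/3}}{m}) = \frac{288LF_0}{T}\max(1,\tfrac{n^{2/3}}{m})$ gives the second term, and $2\sqrt{c_1 c_2} = 2\sqrt{\frac{4F_0}{TK}\cdot 148\,LV} = 8\sqrt{\frac{37LF_0}{KT}V}$ gives the first term after unfolding $V$, which is precisely the claimed rate.

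The only point demanding care, rather than a genuine obstacle, is the bookkeeping of constants: one must confirm that the coefficient $148$ in the theorem's variance term equals $4\times 37$, so that the $37$ inside the prescribed step size and the $37$ under the final square root are consistent, and that $\min(m/n^{2/3},1)$ and $\max(1,n^{2/3}/m)$ are genuine reciprocals so the $\frac{c_1}{\gamma_{\max}}$ term takes the stated form. These are pure arithmetic identities, so once Theorem~\ref{thm-scaffold} is invoked the corollary follows with no analytic difficulty.
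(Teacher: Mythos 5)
Your proposal is correct and is exactly the argument the paper intends: the corollary is stated as a direct consequence of Theorem \ref{thm-scaffold} via the standard step-size balancing $\gamma = (1/\gamma_{\max} + \sqrt{c_2/c_1})^{-1}$, and your constant bookkeeping ($148 = 4\times 37$, $592 = 16\times 37$, and the reciprocity of $\min(m/n^{2/3},1)$ and $\max(n^{2/3}/m,1)$) all checks out.
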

\begin{remark}[\sc removing gradient dissimilarity influence] 
Corollary \ref{cor-scaffold-rate} presents the convergence rate of Scaffold-P without the need for the bounded gradient dissimilarity assumption. Hence, by removing the influence of $b^2$, Scaffold-P attains a faster and more robust convergence compared to FedAvg-P, particularly in scenarios of high gradient dissimilarity.
\end{remark}

\subsubsection{Convergence rate in special scenarios} 
When the personal variable $v_i$ is removed, Scaffold-P reduces to the original Scaffold algorithm proposed in \cite{karimireddy2020scaffold} for solving the classical FL problem \eqref{prob-full} with a single shared model. In this setting, by taking $\sigma_v^2 = 0$ in Corollary \ref{cor-scaffold-rate}, the rate of the original Scaffold under partial client participation {can be recovered} as follows:
\begin{align}\label{scaffold-ours}
\avgt \EE\|\nabla f(u^t)\|^2  = O\left(\sqrt{\frac{\sigma_u^2}{mKT}} + \frac{1}{T}\max\left(1, \frac{n^{2/3}}{m}\right)\right).
\end{align}
Before our results, the best known rate for Scaffold established in \cite{karimireddy2020scaffold} is:
\begin{align}\label{scaffold-spk}
\avgt \EE\|\nabla f(u^t)\|^2  = O\left(\sqrt{\frac{\sigma_u^2}{mKT}} + \frac{1}{T}\left(\frac{n}{m}\right)^{2/3}\right).
\end{align}
It {can be} observed that our recovered rate improves upon the best known result in the literature, demonstrating the sharpness of our analysis for Scaffold-P.  

When $K=1$ and there is no shared variables $u$,
Scaffold-P reduces to parallel SGD. In this setting, Corollary \ref{cor-scaffold-rate} recovers the rate of parallel SGD by setting $\sigma_u^2 = 0$.

\section{Numerical Experiments}\label{sec-experiment}
In this section, we conduct numerical experiments to validate the main theoretical findings presented in Sections \ref{sec-fedavg} and \ref{sec-scaffold}. In our experiments, we consider the logistic regression problem whose objective function has the form
\begin{align*}
f_i(u, v_i) = \frac{1}{N}\sum_{\ell \in \mathcal{D}_i} \log(1+\exp(-c_\ell(a_\ell^Tu+b_\ell^Tv_i))) + \rho\,r(u, v_i),
\end{align*}
where $r(u, v_i) = \frac{\|u\|^2}{1+\|u\|^2} + \frac{\|v_i\|^2}{1+\|v_i\|^2}$ is a non-convex regularization term \cite{antoniadis2011penalized, xin2021improved}. {Data $\{(a_\ell,b_\ell,c_\ell)\}$ is from the MNIST dataset, which has been divided into $n$ groups, denoted as $\mathcal{D}_1$ through $\mathcal{D}_n$, and distributed to $n$ clients in accordance with the FL framework.} Here, we have set $n=10$. Each individual data point is then split into two components: the shared features $a_\ell\in\mathbb{R}^{d_u}$ and the personal features $b_\ell\in\mathbb{R}^{d_v}$.

We explore the influence of $\gamma, m$, and $K$ on the performance of  FedAvg-P and Scaffold-P, which is depicted in Figs.~\ref{fig-fedavg-gnorm-lom} and \ref{fig-scaffold-gnorm-lom}, respectively. Unless specified otherwise, we set $\gamma = 0.001, m=9, K=25$.
In Figs.~\ref{fig-fedavg-y-lom} and \ref{fig-scaffold-y-lom}, we visualize the influence of the step size parameter $\gamma$ on the convergence behavior. Consistent with our convergence results in Theorems \ref{thm-fedavg} and \ref{thm-scaffold}, these figures illustrate that a smaller value of $\gamma$ leads to a smaller convergence error when the algorithm reaches a steady state, as well as a slower convergence rate. Furthermore, Figs.~\ref{fig-fedavg-m-lom} and \ref{fig-scaffold-m-lom} demonstrate the impact of the number of participating clients $m$ on the gradient norm. 
In this experiment, we intentionally set $\sigma_v$ to be 0, indicating that there is no noise present in the computation of the gradient with respect to $v_i$. This deliberate choice allows us to accurately examine the impact of $m$ on the convergence rate. The curves reveal that, with a fixed step size, increasing the number of participating clients results in smaller convergence error, which aligns with the findings from our convergence results.

\begin{figure*}[!htbp]
\centering
\subfloat[different $\gamma$.]{\includegraphics[trim={0 0 0 1.4cm},clip,width=1.9in]{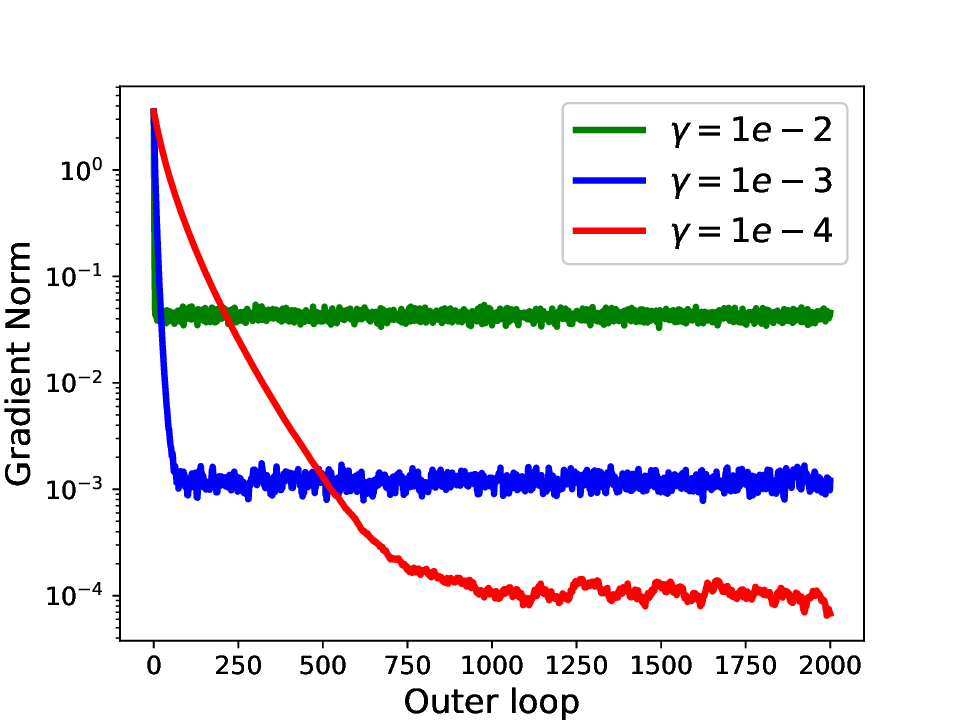}%
\label{fig-fedavg-y-lom}}%
\hfil
\subfloat[different $m$.]{\includegraphics[trim={0 0 0 1.4cm},clip,width=1.9in]{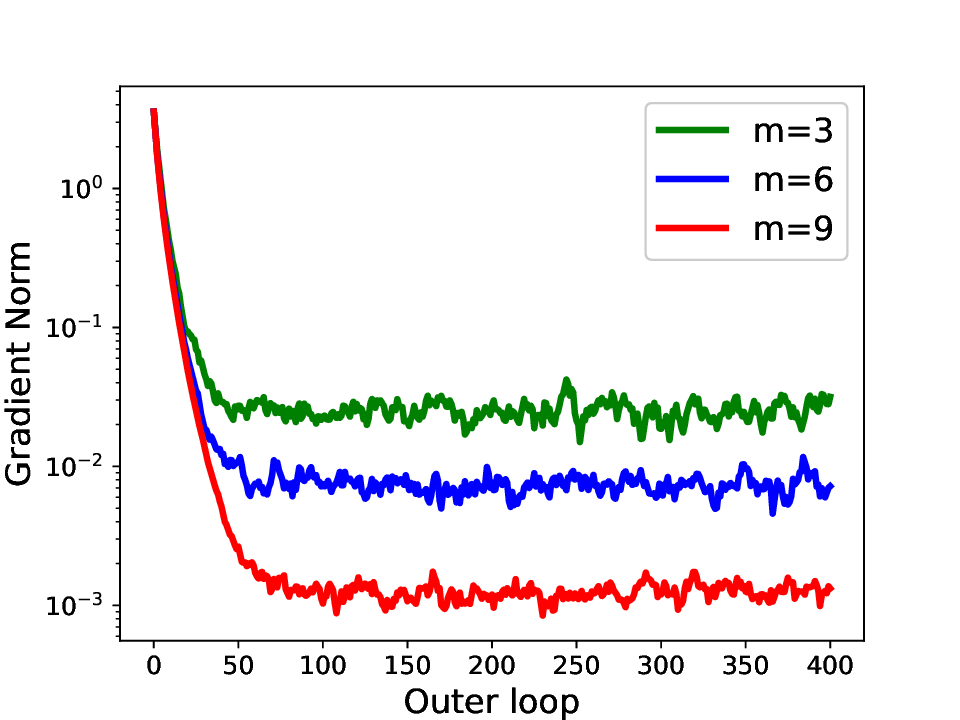}%
\label{fig-fedavg-m-lom}}%
\hfil
\subfloat[different $K$.]{\includegraphics[trim={0 0 0 1.4cm},clip,width=1.9in]{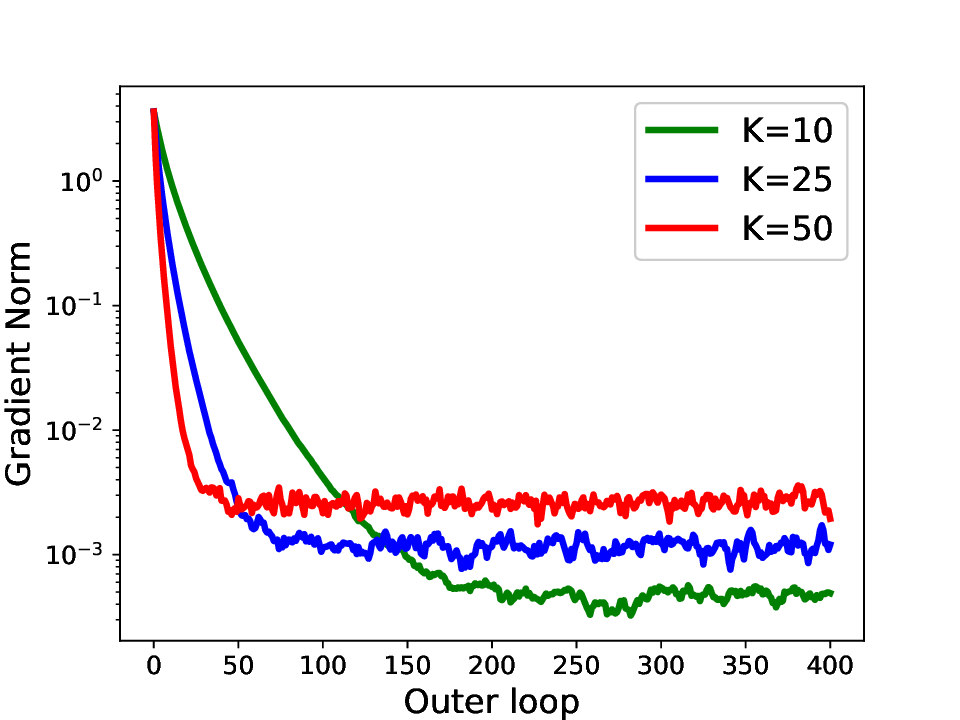}%
\label{fig-fedavg-K-lom}}%
\hfil
\subfloat[different $K$, m=n.]{\includegraphics[trim={0 0 0 1.4cm},clip,width=1.9in]{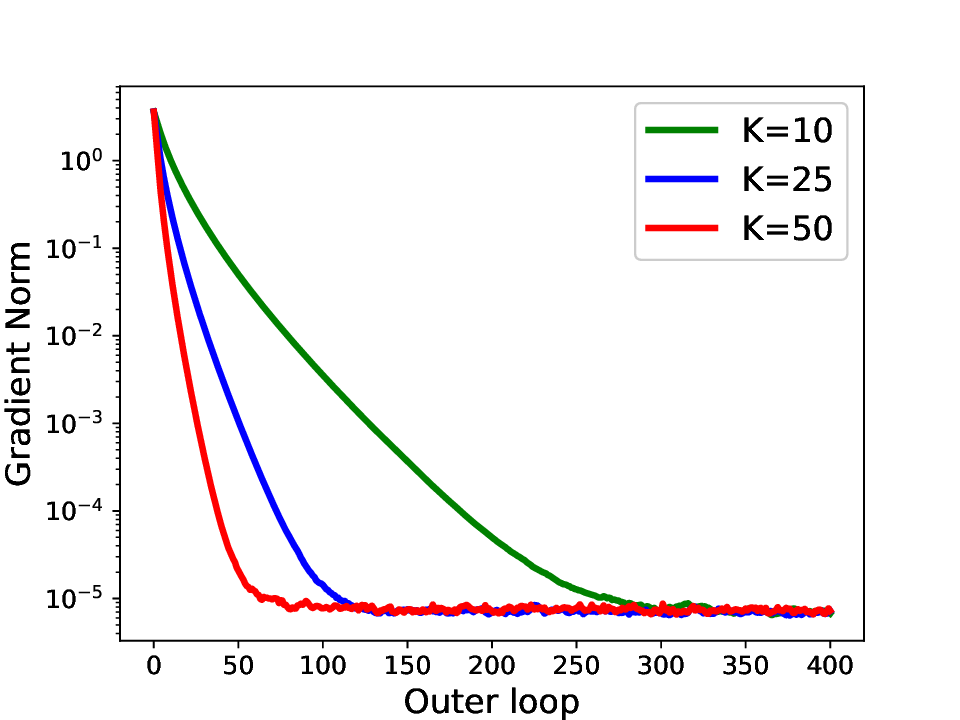}%
\label{fig-fedavg-K-lomd}}%
\caption{Gradient norm versus the number of outer iteration for FedAvg-P algorithm.}
\label{fig-fedavg-gnorm-lom}
\end{figure*}

\vspace{-1ex}
\begin{figure*}[!htbp]
\centering
\subfloat[different $\gamma$.]{\includegraphics[trim={0 0 0 1.4cm},clip,width=1.7in]{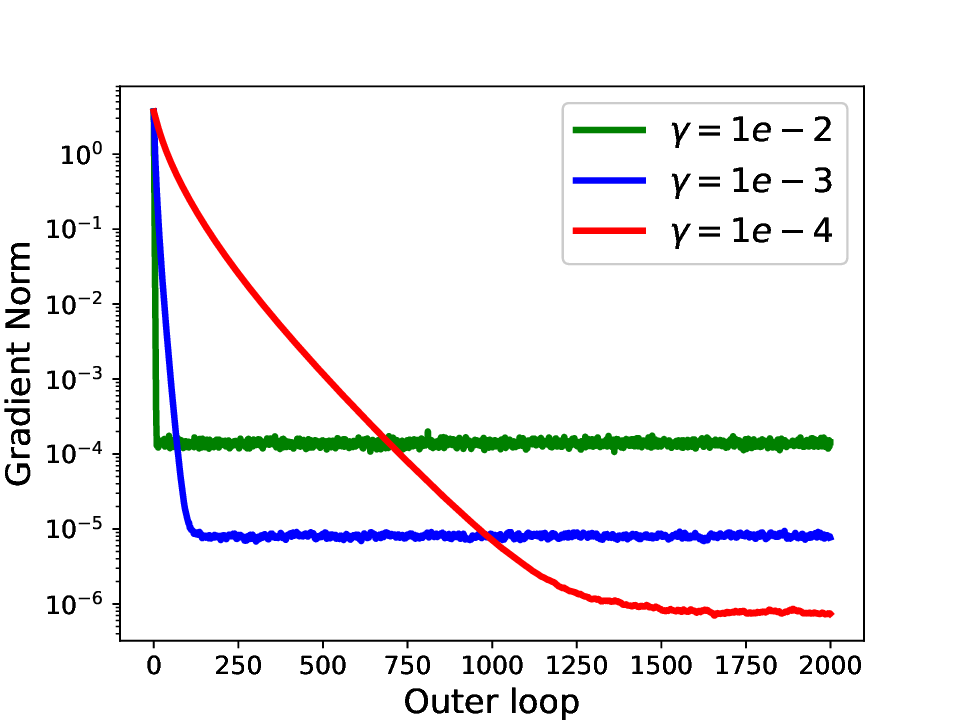}%
\label{fig-scaffold-y-lom}}
\hfil
\subfloat[different $m$.]{\includegraphics[trim={0 0 0 1.4cm},clip,width=1.7in]{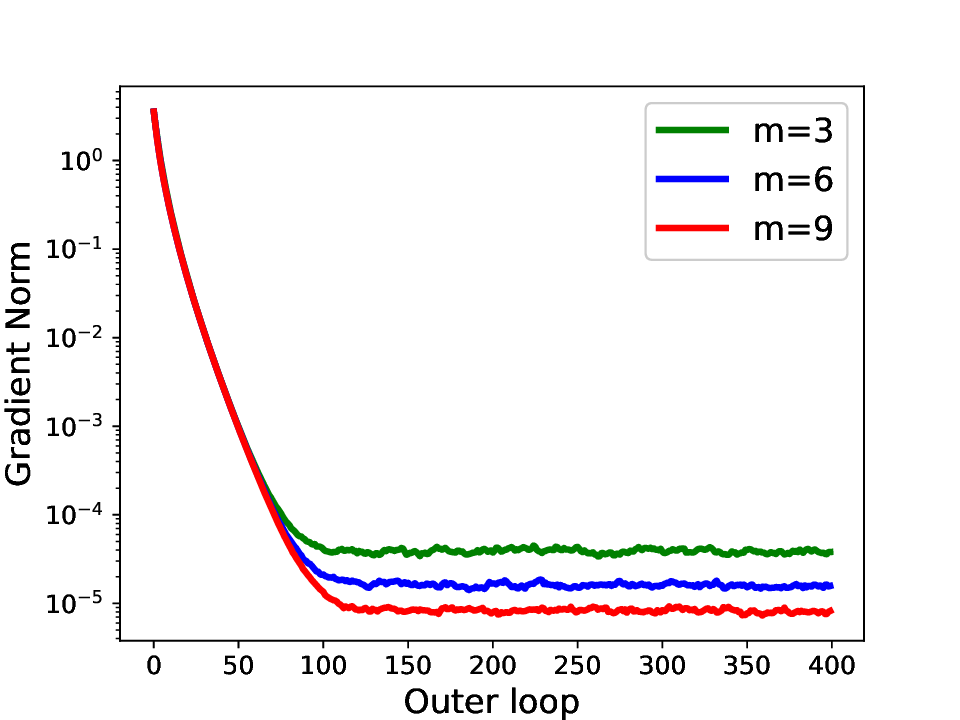}%
\label{fig-scaffold-m-lom}}
\hfil
\subfloat[different $K$.]{\includegraphics[trim={0 0 0 1.4cm},clip,width=1.7in]{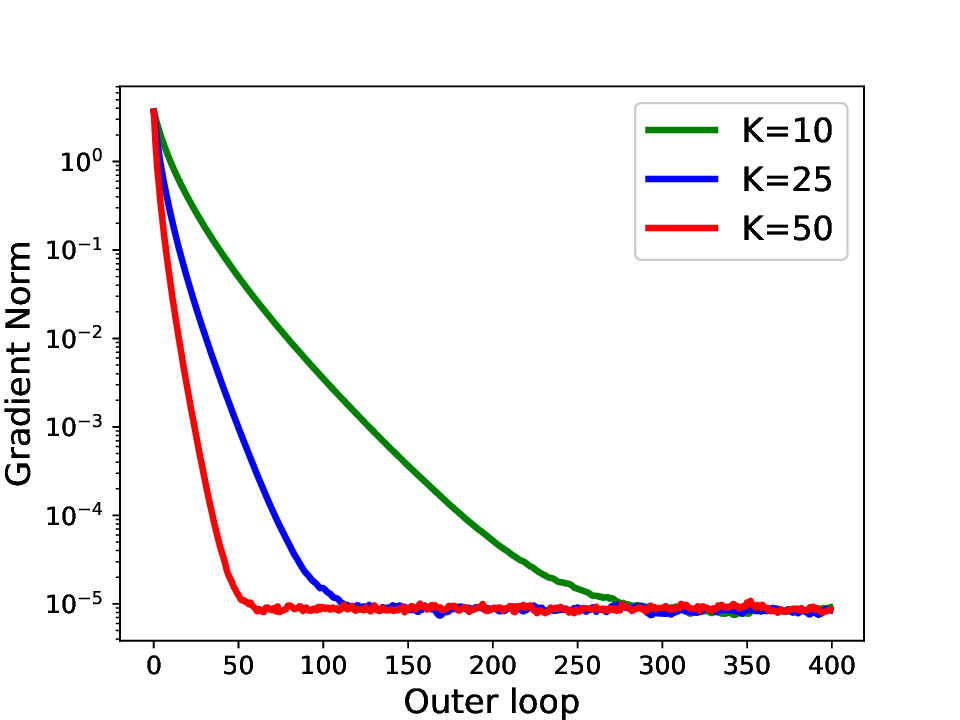}%
\label{fig-scaffold-K-lom}}
\hfil
\caption{Gradient norm versus the number of outer iteration for Scaffold-P algorithm.}
\label{fig-scaffold-gnorm-lom}
\end{figure*}

Figs.~\ref{fig-fedavg-K-lom} and \ref{fig-scaffold-K-lom} provide valuable insights into the influence of the number of local updates $K$ on the convergence behavior of the FedAvg-P and Scaffold-P algorithms. The experimental results align with the theoretical findings presented in Theorems \ref{thm-fedavg} and \ref{thm-scaffold}. Specifically, increasing the value of $K$ accelerates the convergence of both algorithms, confirming the theoretical analysis. However, it is important to note the contrasting impact of $K$ on the convergence error behavior of the FedAvg-P and Scaffold-P algorithms. According to Theorem \ref{thm-fedavg}, in the scenario of partial client participation, a larger value of $K$ leads to a larger gradient norm in the steady state for the FedAvg-P algorithm {due to the term $O(\frac{(n-m)Kb^2\gamma}{mn} + \frac{K^2b^2\gamma^2}{\eta_u^2})$.} This implies a trade-off between convergence rate and the convergence error when adjusting parameter $K$. Conversely, there are no such terms impacting the convergence rate of Scaffold-P. Thus, the values of $K$ and $b^2$ do not influence the steady-state convergence error. These observations are distinctly illustrated in the provided figures. Furthermore, Fig.~\ref{fig-scaffold-K-lom} shows that Scaffold-P converges with a much smaller error than FedAvg-P in the steady state, which illustrates how removing $b^2$ can benefit the convergence performance as predicted by Theorem \ref{thm-scaffold}. 

Fig.~\ref{fig-fedavg-K-lomd} examines how the number of local updates, denoted as $K$, affects the convergence of FedAvg-P in the scenario of full client participation when $m=n$. According to Theorem \ref{thm-fedavg-f}, the steady-state convergence error is independent of $K$ and $b^2$, which is well justified in Fig.~\ref{fig-fedavg-K-lomd}.

\section{Conclusion}

This paper introduces FedAvg-P and Scaffold-P and offers a rigorous convergence analysis. Our analysis framework is versatile, making it applicable to a wide range of FL problems. When we apply our analysis to the classical FL problem with a single shared model, it yields more refined convergence rates compared to existing approaches. Moreover, in the case of full client participation, we demonstrate that the BGD assumption for the FedAvg algorithm can be omitted. Future directions include introducing compressed or decentralized communication to the partial model personalization problem. 

\appendix
\section{Proofs}\label{sec-proof}
In this section, we provide detailed proofs for several lemmas used in this paper. We begin by introducing some prerequisite technical lemmas. The following one is frequently employed in the analysis of stochastic first-order algorithms, whose proof derives from the property of $L^2$ martingales \cite{durrett2019probability}.
\begin{lemma}\label{lem-mds}
Suppose that $\{X_n, \cF_n\}$ is a $L^2$ martingale difference sequence, then
	\begin{align*}
		&\EE\|X_1+X_2+...+X_n\|^2  = \EE \|X_1\|^2+\EE \|X_2\|^2+...+\EE \|X_n\|^2.
    \end{align*}
\end{lemma}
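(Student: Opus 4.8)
The plan is to expand the squared norm of the sum and show that all off-diagonal (cross) terms vanish in expectation, leaving only the diagonal terms. First I would write
\begin{align*}
\EE\|X_1 + \cdots + X_n\|^2 = \sum_{i=1}^n \EE\|X_i\|^2 + 2\sum_{1 \le i < j \le n} \EE\langle X_i, X_j\rangle,
\end{align*}
so that the claim reduces to showing $\EE\langle X_i, X_j\rangle = 0$ whenever $i < j$. The diagonal terms are exactly the right-hand side of the statement, so no further work is needed there.

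The key step is to kill each cross term using the tower property of conditional expectation together with the defining property of a martingale difference sequence, namely $\EE[X_j \mid \cF_{j-1}] = 0$. For fixed $i < j$, I would condition on $\cF_{j-1}$ and write $\EE\langle X_i, X_j\rangle = \EE\big[\EE[\langle X_i, X_j\rangle \mid \cF_{j-1}]\big]$. Since $i \le j-1$, the vector $X_i$ is $\cF_{j-1}$-measurable, so it can be pulled out of the inner conditional expectation, giving $\EE[\langle X_i, X_j\rangle \mid \cF_{j-1}] = \langle X_i, \EE[X_j \mid \cF_{j-1}]\rangle = \langle X_i, 0\rangle = 0$. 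Taking the outer expectation yields $\EE\langle X_i, X_j\rangle = 0$, as required.

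There is essentially no serious obstacle here; the result is a routine orthogonality argument. The only points requiring mild care are the measurability of $X_i$ with respect to $\cF_{j-1}$ (which follows from $\cF_i \subseteq \cF_{j-1}$ in the filtration) and the integrability needed to justify pulling $X_i$ out and to apply the tower property, both of which are guaranteed by the $L^2$ assumption via Cauchy--Schwarz ($\EE|\langle X_i, X_j\rangle| \le (\EE\|X_i\|^2)^{1/2}(\EE\|X_j\|^2)^{1/2} < \infty$). Summing the vanishing cross terms then leaves $\sum_{i=1}^n \EE\|X_i\|^2$, completing the proof.
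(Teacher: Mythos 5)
Your proof is correct and is exactly the standard orthogonality argument the paper alludes to (it gives no written proof, only citing the $L^2$ martingale property from Durrett): expand the square and kill the cross terms via the tower property, measurability of $X_i$ with respect to $\cF_{j-1}$, and $\EE[X_j\mid\cF_{j-1}]=0$, with Cauchy--Schwarz supplying the needed integrability. Nothing further is required.
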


The following lemma is established to deal with the partial client participation.
\begin{lemma}\label{lem-id}
Let $\{X_i:i=1,2,...,n\}$ be random vectors that are independent of $\{\cht:i=1,2,...,n\}$, then we have  
    \begin{align*}
        \EE\Big\|\frac{1}{m}\sumi\cht X_i\Big\|^2 = \EE\|\bar X\|^2 + \frac{2(n-m)}{mn^2}\sumi\EE\|X_i-\bar X\|^2,\  \text{where}\ \bar X := \avgi X_i.
    \end{align*}
   
\end{lemma}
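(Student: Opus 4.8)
The plan is to prove the identity by a direct second--moment expansion that cleanly separates the randomness of the sample $\cS^t$ from that of the vectors $X_i$. Writing $Y:=\frac1m\sumi\cht X_i$, I would first expand the squared norm into a double sum and use the stated independence of $\{X_i\}$ from $\{\cht\}$ to factor the expectation:
\[
\EE\|Y\|^2=\frac1{m^2}\sum_{i=1}^n\sum_{j=1}^n\EE\big[\cht\,\mathbbm{1}_{\{j\in\cS^t\}}\big]\,\EE\langle X_i,X_j\rangle .
\]
Everything then reduces to the inclusion probabilities of a uniform size-$m$ sample and to reorganizing the diagonal ($i=j$) and off-diagonal ($i\neq j$) sums into a bias--variance form. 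I want to stress at the outset that this computation produces an exact identity (an equality), not merely a bound: no step discards information, so the output is a genuine equation in $\EE\|\bar X\|^2$ and $\sumi\EE\|X_i-\bar X\|^2$.

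For the combinatorial input I would use that, since $\cS^t$ is uniform over the $\binom{n}{m}$ subsets of size $m$, each indicator satisfies $\EE[\cht]=\EE[\cht^2]=\tfrac{m}{n}$, while any distinct pair satisfies $\EE[\cht\,\mathbbm{1}_{\{j\in\cS^t\}}]=\tfrac{m(m-1)}{n(n-1)}$. These values are forced by exchangeability and cannot be altered by any sampling convention. Substituting them yields a diagonal contribution $\tfrac{1}{mn}\sumi\EE\|X_i\|^2$ and an off-diagonal contribution $\tfrac{m-1}{mn(n-1)}\sum_{i\neq j}\EE\langle X_i,X_j\rangle$. I would then convert the cross terms through $\sum_{i\neq j}\langle X_i,X_j\rangle=\big\|\sumi X_i\big\|^2-\sumi\|X_i\|^2=n^2\|\bar X\|^2-\sumi\|X_i\|^2$ and eliminate the raw second moments by the split $\sumi\EE\|X_i\|^2=\sumi\EE\|X_i-\bar X\|^2+n\EE\|\bar X\|^2$.

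The main obstacle is the bookkeeping of the final assembly: one must verify that the coefficients multiplying $\EE\|\bar X\|^2$ collapse to exactly $1$ (the exchangeability cancellation $\tfrac{n-m}{m(n-1)}+\tfrac{(m-1)n}{m(n-1)}=1$), while the variance term $\sumi\EE\|X_i-\bar X\|^2$ retains the sharp factor $\tfrac{n-m}{mn(n-1)}$. Carrying this through gives the exact equality $\EE\|Y\|^2=\EE\|\bar X\|^2+\tfrac{n-m}{mn(n-1)}\sumi\EE\|X_i-\bar X\|^2$. Since $\tfrac{1}{n-1}\le\tfrac{2}{n}$ for $n\ge2$, the coefficient $\tfrac{2(n-m)}{mn^2}$ appearing in the displayed relation is the convenient upper bound under which the identity is invoked in Lemmas~\ref{lem-fedavg-step} and~\ref{lem-scaffold-step}, where it feeds directly into the smoothness descent inequalities.
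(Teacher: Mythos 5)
Your proposal is correct and follows essentially the same route as the paper's proof: expand the second moment, factor via independence, plug in the inclusion probabilities $\frac{m}{n}$ and $\frac{m(m-1)}{n(n-1)}$ of uniform size-$m$ sampling, reorganize into the exact identity with coefficient $\frac{n-m}{mn(n-1)}$, and finish with $\frac{1}{n-1}\le\frac{2}{n}$ for $n\ge 2$. You also correctly observe (as the paper's own final step implicitly concedes) that the displayed relation with coefficient $\frac{2(n-m)}{mn^2}$ is really an upper bound rather than an equality, which is all that is used in Lemmas~\ref{lem-fedavg-step} and~\ref{lem-scaffold-step}.
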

\begin{proof}
If $n=1$, this lemma holds evidently. Now we assume $n\ge 2$ and obtain
\begin{align*}
\EE\Big\|\frac{1}{m}\sumi\cht X_i\Big\|^2 
&= \frac{1}{m^2}\sum\limits_{i,j}\EE(\cht\ch_{\{j\in \cS^t\}})\EE\langle X_i, X_j \rangle.\\
\end{align*}
The equation derives from the independence between $X_i$ and $\cht$. Recalling that $\cS^t$ is a randomly selected subset of $\{1,2,...,n\}$ with size $m$, we have
\begin{align*}
\EE\Big\|\frac{1}{m}\sumi\cht X_i\Big\|^2 
&= \frac{1}{mn}\sumi\EE\|X_i\|^2 + \frac{m-1}{mn(n-1)}\sum\limits_{i\ne j}\langle X_i, X_j \rangle\\
&= \EE\|\bar X\|^2 + \frac{n-m}{mn(n-1)}\sumi\EE\|X_i-\bar X\|^2. 
\end{align*}
Finally, use the fact that $\frac{1}{n-1} \le \frac{2}{n}$ when $n \ge 2$ to obtain the final result. 
\end{proof}

The next lemma provides an upper bound on the difference between the function values at two consecutive outer loops.
\begin{lemma}\label{lem-step}
	Under Assumption \ref{ass-smooth}, it holds that 
\begin{equation}\label{eqa-step}
    \begin{aligned}
    \EE f(u^{t+1},\vv^{t+1}) - \EE f(u^t,\vv^t) &\le \EE \langle \nabla_u f(u^t,\vv^t), \Delta u^t \rangle + \avgi \EE \langle \nabla_v f_i(u^t, v_i^t), \Delta v_i^t \rangle \\
    &\quad \quad + L\EE\|\Delta u^t\|^2 + \avgi L\EE\|\Delta v_i^t\|^2,
    \end{aligned}
\end{equation}
\end{lemma}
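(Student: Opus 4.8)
The plan is to reduce the aggregate descent inequality \eqref{eqa-step} to a single per-client statement and then average. Writing $f = \avgi f_i$ and recalling that the shared increment $\Delta u^t := u^{t+1}-u^t$ is common to all clients while each $\Delta v_i^t := v_i^{t+1}-v_i^t$ is client-specific, it suffices to prove, for every fixed $i$, the pointwise bound
\begin{equation*}
f_i(u^{t+1},v_i^{t+1}) - f_i(u^t,v_i^t) \le \langle \du f_i(u^t,v_i^t), \Delta u^t\rangle + \langle \dv f_i(u^t,v_i^t), \Delta v_i^t\rangle + L\|\Delta u^t\|^2 + L\|\Delta v_i^t\|^2 .
\end{equation*}
Averaging over $i\in[n]$ and using $\avgi \du f_i(u^t,v_i^t) = \nabla_u f(u^t,\vv^t)$ together with the fact that $\Delta u^t$ does not depend on $i$ (so that $\avgi\langle \du f_i(u^t,v_i^t),\Delta u^t\rangle = \langle \nabla_u f(u^t,\vv^t),\Delta u^t\rangle$ and $\avgi L\|\Delta u^t\|^2 = L\|\Delta u^t\|^2$) yields the deterministic version of \eqref{eqa-step}; taking expectations then gives the claim, since the inequality holds path by path and requires only Assumption \ref{ass-smooth}.

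For the per-client bound I would exploit the \emph{block}-Lipschitz structure of Assumption \ref{ass-smooth} rather than appeal to a single joint smoothness constant. The cleanest route is to split the increment into its two blocks and apply the classical descent lemma twice. First, holding $v_i^t$ fixed and using that $\du f_i(\cdot,v_i^t)$ is $L_u$-Lipschitz, I obtain $f_i(u^{t+1},v_i^t) \le f_i(u^t,v_i^t) + \langle \du f_i(u^t,v_i^t),\Delta u^t\rangle + \tfrac{L_u}{2}\|\Delta u^t\|^2$. Next, holding $u^{t+1}$ fixed and using that $\dv f_i(u^{t+1},\cdot)$ is $L_v$-Lipschitz, I obtain $f_i(u^{t+1},v_i^{t+1}) \le f_i(u^{t+1},v_i^t) + \langle \dv f_i(u^{t+1},v_i^t),\Delta v_i^t\rangle + \tfrac{L_v}{2}\|\Delta v_i^t\|^2$. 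Adding these telescopes the function values, but the second inner product is evaluated at $(u^{t+1},v_i^t)$ rather than at the base point $(u^t,v_i^t)$ demanded by the target inequality.

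The one genuinely delicate point — and the step I expect to require the most care — is correcting this gradient mismatch so that only base-point gradients survive while the constants still collapse to $L$. Here I would use the cross-Lipschitz estimate $\|\dv f_i(u^{t+1},v_i^t)-\dv f_i(u^t,v_i^t)\|\le L_{vu}\|\Delta u^t\|$ followed by Cauchy--Schwarz and Young's inequality, giving $\langle \dv f_i(u^{t+1},v_i^t)-\dv f_i(u^t,v_i^t),\Delta v_i^t\rangle \le \tfrac{L_{vu}}{2}(\|\Delta u^t\|^2+\|\Delta v_i^t\|^2)$. Combining everything, the coefficient of $\|\Delta u^t\|^2$ becomes $\tfrac{L_u+L_{vu}}{2}$ and that of $\|\Delta v_i^t\|^2$ becomes $\tfrac{L_v+L_{vu}}{2}$, both at most $L$ by the definition $L=\max\{L_u,L_v,L_{uv},L_{vu}\}$. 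This produces exactly the per-client inequality above. As a cross-check on the constant $L$, an equivalent symmetric derivation parametrizes $s\mapsto f_i(u^t+s\Delta u^t,v_i^t+s\Delta v_i^t)$, applies the fundamental theorem of calculus, and bounds $\phi'(s)-\phi'(0)$ using the block-Lipschitz estimates together with $(\|a\|+\|b\|)^2\le 2\|a\|^2+2\|b\|^2$; both routes deliver the same coefficient, confirming that $L$ (not $L/2$) is the correct constant once the coupling terms are accounted for.
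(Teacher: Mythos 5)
Your proposal is correct and follows essentially the same route as the paper's proof: a Gauss--Seidel splitting of the increment, the block-wise descent lemma, and a cross-Lipschitz correction of the mismatched gradient via Young's inequality, with the constants collapsing to $L=\max\{L_u,L_v,L_{uv},L_{vu}\}$. The only (immaterial) difference is that you pass through the intermediate point $(u^{t+1},v_i^t)$ and correct the $v$-gradient using $L_{vu}$, whereas the paper passes through $(u^t,v_i^{t+1})$ and corrects the $u$-gradient using $L_{uv}$.
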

where $\Delta u^t := u^{t+1} - u^t$ and $\Delta v_i^t := v_i^{t+1} - v_i^t$.
\begin{proof}
Firstly, we can establish the following inequality since each $f_i$ has a Lipschitz continuous gradients with respect to both $u$ and $v_i$ by Assumption \ref{ass-smooth}: 
\begin{align*}
 f_i(u^{t+1}, &v_i^{t+1}) - f_i(u^t, v_i^t) = f_i(u^{t+1}, v_i^{t+1}) - f_i(u^t, v_i^{t+1}) + f_i(u^t, v_i^{t+1}) - f_i(u^t, v_i^t)\\
&\le \langle \nabla_u f_i(u^t, v_i^{t+1}), \Delta u^t \rangle+ \frac{L_u}{2}\|\Delta u^t\|^2 + \langle \nabla_v f_i(u^t, v_i^t), \Delta v_i^t \rangle + \frac{L_v}{2}\|\Delta v_i^t\|^2.
\end{align*}
By leveraging Assumption \ref{ass-smooth}, we have 
\begin{align*}
\langle \nabla_u f_i(u^t, v_i^{t+1}), \Delta u^t \rangle 
&= \langle \nabla_u f_i(u^t, v_i^{t+1}) - \nabla_u f_i(u^t, v_i^t), \Delta u^t \rangle  + \langle \nabla_u f_i(u^t, v_i^t),  \Delta u^t \rangle\\
&\le L_{uv}\|\Delta v_i^t\|\|\Delta u^t\|+ \langle \nabla_u f_i(u^t, v_i^t),  \Delta u^t \rangle\\
&\le \frac{L_{uv}}{2}(\|\Delta v_i^t\|^2+\|\Delta u^t\|^2)  + \langle \nabla_u f_i(u^t, v_i^t),  \Delta u^t \rangle.
\end{align*}
Combine the above two inequalities to obtain 
\begin{align*}
 f_i(u^{t+1}, v_i^{t+1}) - f_i(u^t, v_i^t) &\le \langle \nabla_u f_i(u^t, v_i^t), \Delta u^t \rangle + \langle \nabla_v f_i(u^t, v_i^t), \Delta v_i^t \rangle\\
&\quad + L\|\Delta u^t\|^2 + L\|\Delta v_i^t\|^2.
\end{align*}
Then the lemma can be deduced by considering relationship  $f(u,\vv) = \frac{1}{n} \sum_i f_i(u,v_i)$ and taking the expected value.
\end{proof}

Having completed the necessary preparatory work, we now present the detailed proofs of several lemmas in Sections \ref{sec-fedavg} and \ref{sec-scaffold}.
\begin{proof}[\underline{Proof of Lemma \ref{lem-fedavg-step}}]
The proof of this lemma relies on the application of Lemma \ref{lem-step} in this specific case. We will bound the four terms appearing on the right-hand side of \eqref{eqa-step} individually to establish the desired result. We start by introducing the following two $\sigma$-algebra to facilitate our analysis:
\begin{align*}
&\cF^t:= \sigma\left(\{\xi_{i,k}^r, \cS^r:1\le i\le n, 0 \le k\le K-1, 0\le r\le t-1\}\right),\\
&\cF^t_{i,k} = \sigma\left(\cF^t\cup\{\cS^t\}\cup\{\xi_{i,l}^t:l \le k\}\cup\{\xi_{j,l}^t:j\le i-1,l \le K-1\}\right).
\end{align*}
From the definition of $\Delta u$ and $\Delta v_i$, we obtain
\begin{align*}
\Delta u^t &= u^{t+1}-u^t = \frac{\eta_u}{m}\sumpit(\hat u_{i,K}^t-\hat u_{i,0}^t)=-\frac{\gamma}{m}\sumik\cht\du F(\hat u_{i,k}^t, \hat v_{i,k}^t, \xi_{i,k}^t),\\
\Delta v_i^t &= v_i^{t+1}-v_i^t = \eta_v(\hat v_{i,K}^{t}-\hat v_{i,0}^t)\cht =\! -\gamma\cht\sumk\dv F(\hat u_{i,k}^t,\! \hat v_{i,k}^t,\! \xi_{i,k}^t), 
\end{align*}
where $\cht$ is the characteristic function of $\cS^t$. Thus it holds that
\begin{equation}\label{eqa-step1}
\begin{aligned}
 \EE\langle \du f(u^t, \vv^t), \Delta u^t \rangle &= \EE\bigg\oldlangle \du f(u^t, \vv^t), -\frac{\gamma}{m}\sumik\cht\du F(\hat u_{i,k}^t, \hat v_{i,k}^t, \xi_{i,k}^t)\bigg\oldrangle\\
&= \sumik\EE\ \EE(\oldlangle \du f(u^t,\vv^t),\!  -\frac{\gamma}{m}\cht\du F(\hat u_{i,k}^t,\! \hat v_{i,k}^t,\! \xi_{i,k}^t)\oldrangle|\cF_{i,k-1}^t)\\
&= \sumik\EE\langle\du f(u^t, \vv^t),\!-\frac{\gamma}{m}\cht\EE(\du F(\hat u_{i,k}^t,\! \hat v_{i,k}^t,\! \xi_{i,k}^t)|\cF_{i,k-1}^t)\rangle\\
&= -\frac{\gamma}{m}\EE\cht\sumik\EE\langle \du f(u^t, \vv^t), \du f_i(\hat u_{i,k}^t, \hat v_{i,k}^t)\rangle = T_{1,u}\\
\end{aligned}   
\end{equation}
The second equality above holds by virtue of the ``tower property" of conditional expectation, and the third equality holds because $\cht$ and $\du f(u^t, \vv^t)$ are $\cF_{i,k-1}^t$-measurable. By employing similar deductions, we can derive the following equation: 
\begin{equation*}
\EE\langle \dv f_i(u^t, v_i^t), \Delta v_i^t \rangle = -\frac{\gamma m}{n}\sumk\EE\langle \dv f_i(u^t, v_i^t), \dv f_i(\hat u_{i,k}^t, \hat v_{i,k}^t)\rangle, 
\end{equation*}
which implies $\frac{1}{n}\sum_i \langle \nabla_v f_i(u^t,v_i^t), 
\Delta v_i^t \rangle = T_{1,\vv}$. 
To establish bounds for $\EE\|\Delta u^t\|^2$ and $\EE\|\Delta v_i^t\|^2$, we introduce the following sequences:
\begin{align*}
&Y_{i,k}^t :=  \cht\left(\du F(\hat u_{i,k}^t, \hat v_{i,k}^t, \xi_{i,k}^t)-\du f_i(\hat u_{i,k}^t, \hat v_{i,k}^t)\right), \\
&Z_{i,k}^t :=  \cht\left(\dv F(\hat u_{i,k}^t, \hat v_{i,k}^t, \xi_{i,k}^t)-\dv f_i(\hat u_{i,k}^t, \hat v_{i,k}^t)\right).
\end{align*}
Since $Y_{i,k}^t$ is adapted to $\cF_{i,k}^t$, and $\cS^t$ is measurable in $\cF^t_{i,k}$ for all $i,k$, 
\begin{align*}
\EE(Y_{i,k}^t|\cF_{i,k-1}^t)
    &=\cht [\EE(\du F(\hat u_{i,k}^t, \hat v_{i,k}^t,\xi_{i,k}^t)|\cF^t_{i,k-1})-\du f_i(\hat u_{i,k}^t, \hat v_{i,k}^t)] = 0,\\
\EE(Y_{i,0}|\cF_{i-1, K-1}^t)
    &=\cht[\EE(\du F(u^t, v_i^t,\xi_{i,0}^t)|\cF^t)-\du f_i(u^t, v_i^t)] = 0.
\end{align*}
Therefore, the sequence \( (Y_{i,k}^t, \mathcal{F}_{i,k}^t)_{1\leq i\leq n, 0\leq k\leq K-1} \) can be regarded as a martingale difference sequence, with the ordering defined such that \( (i,k) < (j,l) \) if \( i = j \) or both \( i = j \) and \( k < l \). The same holds true for $(Z_{i,k}^t, \cF_{i,k}^t)_{0\le k\le K-1}$. 
Thus, 
\begin{equation}\label{ineq-du1}
\begin{aligned}
\EE\|\Delta u^t\|^2
&= \EE\Big\|\frac{\gamma}{m}\sumik\cht\du F(\hat u_{i,k}^t, \hat v_{i,k}^t, \xi_{i,k}^t)\Big\|^2\\
&\le 2\EE\Big\|\frac{\gamma}{m}\sumik\cht\du f_i(\hat u_{i,k}^t, \hat v_{i,k}^t)\Big\|^2 +\frac{2\gamma^2}{m^2} \sumik\EE\|Y_{i,k}^t\|^2 \\
&\le 2T_{2,u} + \frac{4\gamma^2(n-m)}{mn^2} T_{3,u} + \frac{2K\gamma^2\sigma_u^2}{m}, 
\end{aligned}
\end{equation}
where the first inequality holds due to the mean inequality and Lemma~\ref{lem-mds}. In the last step, we use Lemma~\ref{lem-id} to bound the first term: 
\[ \EE\Big\|\frac{\gamma}{m}\sumik\cht\du f_i(\hat u_{i,k}^t, \hat v_{i,k}^t)\Big\|^2 \le T_{2,u} + \frac{2\gamma^2(n-m)}{mn^2} T_{3,u}; 
\]
and use the independence of the virtual sequence and the characteristic function, along with Assumption \ref{ass-sto} to bound the second term: 
\begin{align*}
\EE\|Y_{i,k}^t\|^2
&= \EE\cht\EE\|\du  F(\hat u_{i,k}^t, \hat v_{i,k}^t,\xi_{i,k}^t) -\du f_i(\hat u_{i,k}^t, \hat v_{i,k}^t)\|^2
\le \frac{\sigma_u^2m}{n}. 
\end{align*}

With a similar analysis approach, we can also establish the inequalities 
\begin{equation}\label{ineq-dv}
\begin{aligned}
\EE\|\Delta v_i^t\|^2 
&\le \frac{2\gamma^2m}{n}\EE\bigg\|\sumk \dv f_i(\hat u_{i,k}^t, \hat v_{i,k}^t)\bigg\|^2 + \frac{2K\gamma^2\sigma_v^2m}{n}, 
\end{aligned}
\end{equation}
for all $i\in[n]$, which implies $\frac{1}{n}\sum_i \|\Delta v_i^t\|^2 \le 2T_{2,\vv} + \frac{2K\sigma_v^2\gamma^2m}{n}$. 
Apply the inequalities derived above to \eqref{eqa-step} to obtain \eqref{lem-fedavg-step}, and the proof is complete.
\end{proof}
\begin{proof}[\underline{Proof of Lemma \ref{lem-fedavg-b}}]
The inequality $\cB^t \le n\cG_u^t + nb^2$ follows directly from Assumption \ref{ass-hete}. Now, let Assumptions \ref{ass-smooth}, \ref{ass-sto}, and $m=n$ hold, we first prove the following inequality:
\begin{equation}\label{ineq-be}
\begin{aligned}
\cB^t &\le \Big(1+\frac{1}{T-1}\Big)\cB^{t-1} + nT(\cG_u^{t-1}+\cG_{\vv}^{t-1}) + \frac{TL^2}{K}\cE^{t-1} + \frac{nT}{K}\Big(\frac{\sigma_u^2}{n}+\sigma_v^2\Big).
\end{aligned}
\end{equation}
Recalling $\cB^t = \sum_{i=1}^n\EE \|\nabla_u f_i(u^t,v_i^t)\|^2$, we begin by establishing upper bounds on the individual terms $\EE\|\du f_i(u^t, v_i^t)\|^2$. By the convexity of $\|\cdot\|^2$, 
\begin{align*}
\EE\|\du f_i(u^t, v_i^t)\|^2 &= \EE\|\du f_i(u^t, v_i^t) - \du f_i(u^{t-1}, v_i^{t-1}) + \du f_i(u^{t-1}, v_i^{t-1})\|^2\\
&\le \Big(1+\frac{1}{\kappa}\Big)\EE\|\du f_i(u^{t-1}, v_i^{t-1})\|^2 \\
&\quad + 2(1+\kappa) L^2\EE(\|\Delta u^{t-1}\|^2+\|\Delta v_i^{t-1}\|^2)
\end{align*}
for all $i\in[n]$ and all $\kappa>0$. 
Now, by setting $\kappa = T-1$, we have
\begin{equation}\label{ineq-bd}
\begin{aligned}
\cB^t \le \Big(1+\frac{1}{T-1}\Big)\cB^{t-1} + 2TL^2\sumi\EE\left(\|\Delta u^{t-1}\|^2+\|\Delta v_i^{t-1}\|^2\right).
\end{aligned}
\end{equation}
Apply Lemma~\ref{lem-T2uv} to \eqref{ineq-du1} and \eqref{ineq-dv} to obtain 
\begin{align*}
\EE\|\Delta u^t\|^2 &\le 4K^2\gamma^2\cG_u^t + \frac{8L^2K\gamma^2}{n}\cE^t  + \frac{2K\gamma^2\sigma_u^2}{n},\\
\avgi \EE \|\Delta v_i^t\|^2 &\le 4K^2\gamma^2\cG_{\vv}^t + \frac{8L^2K\gamma^2}{n}\cE^t + 2K\sigma_v^2\gamma^2,
\end{align*}
where we use the fact that $m=n$. These two inequalities can turn \eqref{ineq-bd} into
\begin{align*}
\cB^t &\le \Big(1+\frac{1}{T-1}\Big)\cB^{t-1} + 8nTL^2K^2\gamma^2(\cG_u^{t-1}+ \cG_{\vv}^{t-1}) \\
&\quad +32TL^4K\gamma^2\cE^{t-1}+ 4nTL^2K\gamma^2\Big(\frac{\sigma_u^2}{n} + \sigma_v^2\Big).
\end{align*}
Then, the inequality \eqref{ineq-be} can be proved using $8LK\gamma \le 1$. Next, we use Lemma \eqref{lem-fedavg-e} to replace $\cE^{t-1}$. From Lemma \eqref{lem-fedavg-e}, we have
\begin{align*}
    \cE^{t-1} &\le 8K^3\gamma_u^2\cB^{t-1} + 8nK^3\gamma_v^2\cG_{\vv}^{t-1} + 4nK^2(\sigma_u^2\gamma_u^2+\sigma_v^2\gamma_v^2)\\
    &\le  \frac{8K^3\gamma^2}{T^2}\cB^{t-1} + 8nK^3\gamma^2\cG_{\vv}^{t-1}+4nK^2\gamma^2\Big(\frac{\sigma_u^2}{n}+\sigma_v^2\Big)\\
    &\le  \frac{K}{L^2T^2}\cB^{t-1} + \frac{nK}{L^2}\cG_{\vv}^{t-1}+\frac{n}{L^2}\Big(\frac{\sigma_u^2}{n}+\sigma_v^2\Big).
\end{align*}
The second inequality derives from $\eta_u\ge \max(T, \sqrt{n}), \eta_v \ge 1$, and the final inequality holds since $8LK\gamma \le 1$. Together with \eqref{ineq-be}, we complete the proof.
\end{proof}

\begin{proof}[\underline{Proof of Lemma \ref{lem-scaffold-step}}]
Recalling the definition of $\alpha_{i,k}^{t}, \beta_{i,k}^{t}$, we have
\begin{align*}
c_i^t = \avgk\du F(\alpha_{i,k}^{t-1}, \beta_{i,k}^{t-1}, \xi_{i,k}^{t-1}),\ c^t = \avgi c_i^t = \avgik\du F(\alpha_{i,k}^{t-1}, \beta_{i,k}^{t-1}, \xi_{i,k}^{t-1}).
\end{align*}

Similarly with the proof of Lemma \ref{lem-fedavg-step}, this one utilize Lemma \ref{lem-step} and bound the four terms in the right hand of \eqref{eqa-step}. Note that
\begin{align*}
\Delta u^t &= u^{t+1}-u^t = -\frac{\gamma}{m}\sumik\cht(\du F(\hat u_{i,k}^t, \hat v_{i,k}^t, \xi_{i,k}^t) - c_i^t + c^t),\\
\Delta v_i^t &= v_i^{t+1}- v_i^t = -\gamma\cht\sumk\dv F(\hat u_{i,k}^t, \hat v_{i,k}^t, \xi_{i,k}^t).
\end{align*}
By employing an analytical methodology analogous to that used in equation \eqref{eqa-step1} and observing that 
$\sum_i c_i^t = \sum_i c^t$, we can establish
\[ 
    \EE\langle \du f(u^t, \vv^t), \Delta u^t \rangle = T_{1,u}
    \quad \text{and} \quad
    \avgi \EE\langle \dv f_i(u^t, v_i^t), \Delta v_i^t \rangle = T_{1,\vv}. 
\]

Next, we bound the terms $\EE\|\Delta u^t\|^2$ and $\EE\|\Delta v_i^t\|^2$. For $\EE\|\Delta u^t\|^2$, we first introduce the sequences 
\begin{align*}
 W_{i,k}^t &:= \cht\Big(\du F(\alpha_{i,k}^{t-1}, \beta_{i,k}^{t-1}, \xi_{i,k}^{t-1})-\du f_i(\alpha_{i,k}^{t-1}, \beta_{i,k}^{t-1})\Big),\\
\hat W_{i,k}^t &:= \du F(\alpha_{i,k}^{t-1}, \beta_{i,k}^{t-1}, \xi_{i,k}^{t-1})-\du f_i(\alpha_{i,k}^{t-1}, \beta_{i,k}^{t-1}).
\end{align*}
A proof structure similar to that used for $Y_{i,k}^t$ can be applied to demonstrate that $W_{i,k}^t, \hat W_{i,k}^t$ also constitute martingale difference sequences, and satisfy the condition $\EE\|W_{i,k}^t\|^2 \le \frac{\sigma_u^2m}{n}$ and $\EE\|\hat W_{i,k}^t\|^2 \le \sigma_u^2$.
Then by combining with Assumption \ref{ass-sto} and the definition of $d_i^t, d^t$, we obtain 
\begin{equation}\label{ineq-cd}
\begin{aligned}
    \EE\Big\|\sumik\cht(c_i^t-d_i^t)\Big\| &= \EE\Big\|\sumik W_{i,k}^t\Big\|^2 = \sumik\EE\|W_{i,k}^t\|^2 \le mK\sigma_u^2,\\
    \EE\Big\|\sumik\cht(c^t-d^t)\Big\| &= \EE\cht \EE\Big\|\sumik \hat W_{i,k}^t\Big\|^2 = \frac{m}{n}\sumik\EE\|\hat W_{i,k}^t\|^2 \le mK\sigma_u^2.
\end{aligned}
\end{equation}
Then we can proceed as follows:
\begin{equation}\label{ineq-dus}
\begin{aligned}
\EE\|\Delta u^t\|^2 
&= \EE\Big\|\frac{\gamma}{m}\sumik\cht(\du F(\hat u_{i,k}^t, \hat v_{i,k}^t, \xi_{i,k}^t) - c_i^t + c^t)\Big\|^2\\
&\le 2\EE\Big\|\frac{\gamma}{m}\sumik\cht(\du f_i(\hat u_{i,k}^t, \hat v_{i,k}^t) - d_i^t + d^t)\Big\|^2 \\
&\quad + \frac{2\gamma^2}{m^2}\EE\Big\|\sumik Y_{i,k}^t + \sumik\cht(c_i^t-d_i^t) + \sumik\cht(c^t-d^t) \Big\|^2\\
&\le 2\EE\Big\|\frac{\gamma}{m}\sumik\cht\left(\du f_i(\hat u_{i,k}^t, \hat v_{i,k}^t) - d_i^t + d^t\right)\Big\|^2 +  \frac{18K\gamma^2\sigma_u^2}{m}\\
&\le 2T_{2,u} + \frac{4\gamma^2(n-m)}{mn^2} S_{3,u} +  \frac{18K\gamma^2\sigma_u^2}{m}, 
\end{aligned}
\end{equation}
where the second inequality holds due to the mean inequality, the property of sequence $Y_{i,k}^t$ and \eqref{ineq-cd}. In the final step, we apply Lemma \ref{lem-id} to bound the first term:
\[ \EE\Big\|\frac{\gamma}{m}\sumik\cht\left(\du f_i(\hat u_{i,k}^t, \hat v_{i,k}^t) - d_i^t + d^t\right)\Big\|^2 \le T_{2,u} + \frac{2\gamma^2(n-m)}{mn^2} S_{3,u}. 
\] 
As for $\EE\|\Delta v_i^t\|^2$, since the expression for $\Delta v_i^t$ remains the same as in the FedAvg-P case, the upper bound given in \eqref{ineq-dv} still holds. Thus combining the inequalities proved above with Lemma \ref{lem-step}, we complete the proof.
\end{proof}

\begin{proof}[\underline{Proof of Lemma \ref{lem-scaffold-e}}]
For $k=0,1,...,K-1$, it holds that:
    \begin{align*}
        \EE\|\hat u_{i,k+1}^t-u^t\|^2
        &= \EE\|\hat u_{i,k}^t-\gamma_u(\du F(\hat u_{i,k}^t, \hat v_{i,k}^t, \xi_{i,k}^t) - c_i^t + c^t)- u^t\|^2\\
        &= \EE\|u_{i,k}^t-\gamma_u(\du f_i(\hat u_{i,k}^t, \hat v_{i,k}^t) - c_i^t + c^t)- u^t\|^2 + \sigma_u^2\gamma_u^2\\
        &\le (1\!+\!\frac{1}{K-1})\EE\|\hat u_{i,k}^t\!-\!u^t\|^2\! +\!K\gamma_u^2\EE\|\du f_i(\hat u_{i,k}^t,\! \hat v_{i,k}^t)\! -\! c_i^t\! +\! c^t\|^2\!\!+\!\sigma_u^2\gamma_u^2\\
        &\le (1\!+\!\frac{1}{K-1})\EE\|\hat u_{i,k}^t\!-\!u^t\|^2\! +\! 6K\gamma_u^2\EE\|\du f_i(\hat u_{i,k}^t, \hat v_{i,k}^t)\! - \du f_i(u^t, v_i^t)\|^2 \\
        &\quad + 6K\gamma_u^2(\EE\|\du f_i(u^t, v_i^t) - d_i^t\|^2 + \EE\|\du f(u^t, \vv^t) - d^t\|^2)\\
        &\quad + 6K\gamma_u^2(\EE\|\du f(u^t, \vv^t)\|^2 + \EE\|c_i^t-d_i^t\|^2 + \EE\|c^t - d^t\|^2) + \sigma_u^2\gamma_u^2 \\
    \end{align*}
    The final inequality derives from the mean inequality. Considering the relationships given by
    \begin{align*}
        \EE\|c_i^t-d_i^t\|^2 &= \EE\Big\|\avgk\hat W_{i,k}^t\Big\|^2 = \frac{1}{K^2}\sumk\EE\|\hat W_{i,k}^t\|^2 \le \frac{\sigma_u^2}{K},\\
        \EE\|c^t-d^t\|^2 &= \EE\Big\|\avgik\hat W_{i,k}^t\Big\|^2 = \frac{1}{n^2K^2}\sumik\EE\|\hat W_{i,k}^t\Big\|^2 \le \frac{\sigma_u^2}{nK},
    \end{align*}
    where we utilize the fact that $\hat W_{i,k}^t$ constitutes a martingale difference sequence. In conjunction with Assumption \ref{ass-smooth}, we have
    \begin{align*}
        \EE\|\hat u_{i,k+1}^t-u^t\|^2
        &\le (1+\frac{1}{K-1}+12L^2K\gamma_u^2)\EE\|\hat u_{i,k}^t-u^t\|^2 + 12L^2K\gamma_u^2\EE\|\hat v_{i,k}^t-v_i^t\|^2\\ 
        &\quad+12L^2\gamma_u^2\sumk\cC_{i,k}^t+\frac{12L^2\gamma_u^2}{n}\cC^t + 6K\gamma_u^2\cG_u^t + 13\sigma_u^2\gamma_u^2,
        \end{align*}
        where $\cC_{i,k}^t := \EE(\|\alpha_{i,k}^{t-1}-u^t\|^2 +\|\beta_{i,k}^{t-1}-v_i^t\|^2)$. Since the update rule for $v_i$ is the same as the FedAvg-P algorithm, we can derive the following expression based on the proof of Lemma \ref{lem-fedavg-e}.
        \begin{align*}
        \EE\|\hat v_{i,k+1}^t-v_i^t\|^2
        &\le (1+\frac{1}{K-1}+4L^2K\gamma_v^2)\EE\|\hat v_{i,k}^t-v_i^t\|^2\\
        &\quad + 4L^2K \gamma_v^2\EE\|\hat u_{i,k}^t-u_i^t\|^2+2K\gamma_v^2\EE\|\dv f_i(u^t, v_i^t)\|^2+\sigma_v^2\gamma_v^2.
    \end{align*}
    Recalling that $\cE_{i,k}^t = \EE(\|\hat u_{i,k}^t-u^t\|^2+\|\hat v_{i,k}^t-v_i^t\|^2)$, we have
\begin{equation}\label{eqa-scaffold-ce}
\begin{aligned}
    \cE_{i,k+1}^t &\le (1+\frac{1}{K-1} + 12L^2K(\gamma_u^2+\gamma_v^2))\cE_{i,k}^t + 12L^2\gamma_u^2\sumk\cC_{i,k}^t +\frac{12L^2\gamma_u^2}{n}\cC^t \\
    &\quad+ 6K\gamma_u^2\cG_u^t +2K\gamma_v^2\|\dv f_i(u^t, v_i^t)\|^2+13\sigma_u^2\gamma_u^2 +\sigma_v^2\gamma_v^2.
\end{aligned}
\end{equation}
 We now choose $\gamma_u, \gamma_v$ such that $12L^2K(\gamma_u^2+\gamma_v^2) \le \frac{1}{K-1}$. Applying \eqref{eqa-scaffold-ce} for $k$ times, we obtain
\begin{align*}
    \cE_{i,k}^t &\le \Big(12L^2\gamma_u^2(\sumk\cC_{i,k}^t\! +\! \frac{1}{n}\cC^t) + 6K\gamma_u^2\cG_u^t +2K\gamma_v^2 \|\dv f_i(u^t, v_i^t)\|^2 + 13\sigma_u^2\gamma_u^2 +\sigma_v^2\gamma_v^2\Big) \\
    &\quad \times \sum\limits_{r=0}^{k-1}(1+\frac{2}{K-1})^r\\
    &\le 48L^2K\gamma_u^2(\sumk\cC_{i,k}^t + \frac{1}{n}\cC^t) + 24K^2\gamma_u^2\cG_u^t + 8K^2\gamma_v^2\|\dv f_i(u^t, v_i^t)\|\\
    &\quad + 52K(\sigma_u^2\gamma_u^2+\sigma_v^2\gamma_v^2).
\end{align*}
The final inequality since $\sum_{r=0}^{k-1}\big(1+\frac{2}{K-1}\big)^r \le \sum_{r=0}^{K-1}\big(1+\frac{2}{K-1}\big)^r \le \frac{K-1}{2}\big(1+\frac{2}{K-1}\big)^{K-1}$ and $1+x \le e^x$. Note that $\cE^t = \sumik \cE_{i,k}^t$ and $\cC^t = \sumik \cC_{i,k}^t$, we have
\begin{align*}
    \cE^t &\le 96L^2K^2\gamma_u^2\cC^t + 24nK^3\gamma_u^2\cG_u^t + 8nK^3\gamma_v^2\cG_{\vv}^t+52nK^2(\sigma_u^2\gamma_u^2+\sigma_v^2\gamma_v^2).
\end{align*}
\end{proof}

\begin{proof}[\underline{Proof of Lemma \ref{lem-scaffold-c}}]
Recall that $\cC^t := \sum_{i,k}\EE (\|\alpha_{i,k}^{t-1}-u^t\|^2 + \|\beta_{i,k}^{t-1}-v_i^t\|^2)$. For $k=0,1,...,K-1$, we have
\begin{align*}
\EE \|\alpha_{i,k}^{t-1}-u^t\|^2 
    &=\EE \Big\|(1-\chto)(\alpha_{i,k}^{t-2}-u^t) + \chto (\hat u_{i,k}^{t-1} - u^t)\Big\|^2\\
    &=\left(1-\frac{m}{n}\right)\EE\|\alpha_{i,k}^{t-2}-u^t\|^2 + \frac{m}{n}\EE\|\hat u_{i,k}^{t-1}-u^t\|^2\\
    &\le\left(1-\frac{m}{n}\right)\EE \|\alpha_{i,k}^{t-2}-u^t\|^2+ \frac{2m}{n}\EE(\|\hat u_{i,k}^{t-1}-u^{t-1}\|^2 + \|\Delta u^{t-1}\|^2).
\end{align*}
The first term can be bounded as
\begin{align*}
\EE\|\alpha_{i,k}^{t-2}-u^t\|^2 
    &= \EE\Big(\|\alpha_{i,k}^{t-2}-u^{t-1}\|^2 + \|\Delta u^{t-1}\|^2 + 2\langle \Delta u^{t-1}, \alpha_{i,k}^{t-2}-u^{t-1}\rangle\Big)\\
    &\le \EE\left((1+\kappa)\|\alpha_{i,k}^{t-2}-u^{t-1}\|^2 + \|\Delta u^{t-1}\|^2\right) + \frac{1}{\kappa}\EE\|\EE(\Delta u^{t-1}|\cF^{t-1})\|^2.
\end{align*}
Setting $\kappa$ to be $\frac{m}{2(n-m)}$ and utilizing the inequalities $\frac{2(n-m)}{m} \le \frac{2n}{m}, m\le n$, we have
\begin{align*}
    \EE\|\alpha_{i,k}^{t-1}-u^t\|^2 &\le \Big(1-\frac{m}{2n}\Big)\EE\|\alpha_{i,k}^{t-2}-u^{t-1}\|^2  + 2\EE\|\Delta u^{t-1}\|^2 \\
    &\quad + \frac{2m}{n}\EE\|\hat u_{i,k}^{t-1}-u^{t-1}\|^2 + \frac{2n}{m}\EE\|\EE(\Delta u^{t-1}|\cF^{t-1})\|^2.
\end{align*}
A similar deduction can also be employed to infer that
\begin{align*}
    \EE\|\beta_{i,k}^{t-1}-v_i^t\|^2 &\le \Big(1-\frac{m}{2n}\Big)\EE\|\beta_{i,k}^{t-1}-v_i^{t-1}\|^2 + 2\EE\|\Delta v_i^{t-1}\|^2 \\
    &\quad + \frac{2m}{n}\EE\|\hat v_{i,k}^{t-1}-v_i^{t-1}\|^2 + \frac{2n}{m}\EE\|\EE(\Delta v_i^{t-1}|\cF^{t-1})\|^2.
\end{align*}
Combining the two inequalities presented above, we obtain the following expression:
\begin{equation}\label{ineq-c}
\begin{aligned}
    \mathcal{C}^t &\le \left(1-\frac{m}{2n}\right)\mathcal{C}^{t-1} + 2nK\mathbb{E}\left(\left\|\Delta u^{t-1}\right\|^2 +\frac{1}{n}\sum_{i=1}^{n}\left\|\Delta v_i^{t-1}\right\|^2\right)+ \frac{2m}{n}\mathcal{E}^{t-1}\\
    &\quad + \frac{2n^2K}{m} \EE\bigg(\|\EE(\Delta u^{t-1}|\cF^{t-1})\|^2 + \avgi\|\EE(\Delta v_i^{t-1}|\cF^{t-1})\|^2\bigg).
\end{aligned}
\end{equation}
Note that \(\mathbb{E}(\Delta u^{t-1}|\mathcal{F}^{t-1}) = -\frac{\gamma}{n}\sum_{i,k}\mathbb{E}\Big(\du f_i(\hat u_{i,k}^{t-1}, \hat v_{i,k}^{t-1})|\mathcal{F}^{t-1}\Big)\). Taking into account the definition of \(T_{2,u}\) and utilizing Lemma \ref{lem-T2uv}, we have
\begin{equation}\label{ineq-edu}
\begin{aligned}
\EE\|\mathbb{E}(\Delta u^{t-1}|\mathcal{F}^{t-1})\|^2 &= \frac{\gamma^2}{n^2}\EE\Big\|\EE\Big(\sumik\du f_i(\hat u_{i,k}^{t-1}, \hat v_{i,k}^{t-1})|\cF^{t-1}\Big)\Big\|^2\\
&\le \frac{\gamma^2}{n^2}\EE\Big\|\sumik\du f_i(\hat u_{i,k}^{t-1}, \hat v_{i,k}^{t-1})\Big\|^2 (= T_{2,u})\\
&\le \frac{4L^2K\gamma^2}{n}\cE^{t-1} + 2K^2\gamma^2\cG_u^{t-1}, 
\end{aligned}
\end{equation}
where the first inequality derives from Jensen's inequality. Similarly, it holds that
\begin{equation}\label{ineq-edv}
\begin{aligned}
    \avgi\EE\|\EE(\Delta v_i^{t-1}|\cF^{t-1})\|^2 &\le T_{2,\vv}
    \le \frac{4L^2K\gamma^2m}{n^2}\cE^{t-1} + 2K^2\gamma^2\hat\cG_{\vv}^{t-1}.
\end{aligned}
\end{equation}
Combining with inequalities \eqref{ineq-dus}, \eqref{ineq-dv} as well as Lemmas \ref{lem-T2uv}, \ref{lem-S3u} and recalling the definitions of $T_{2,u}, T_{2,\vv}, S_{3,u}$, we have
\begin{equation}\label{ineq-duvs}
\begin{aligned}
\EE\|\Delta u^{t-1}\|^2 &\le 4K^2\gamma^2\cG_u^{t-1}\! +\! \frac{72L^2K\gamma^2}{n}\cE^{t-1}\! + \!\frac{64L^2K\gamma^2(n-m)}{mn^2}\cC^{t-1} \!+\! \frac{18K\sigma_u^2\gamma^2}{m},\\
\avgi \EE \|\Delta v_i^{t-1}\|^2 &\le 4K^2\gamma^2 \hat\cG_{\vv}^{t-1} + \frac{8L^2K\gamma^2m}{n^2}\cE^{t-1}  + \frac{2K\sigma_v^2\gamma^2m}{n}.
\end{aligned}
\end{equation}
By substituting \eqref{ineq-edu}, \eqref{ineq-edv} and \eqref{ineq-duvs} into the expression \eqref{ineq-c} and applying $72LK\gamma \le \min(\frac{m}{n^{2/3}},1)$, we obtain 
\begin{equation}\label{ineq-scaffold-c}
\begin{aligned}
    \cC^t 
    &\le \left(1-\frac{m}{3n}\right)\cC^{t-1} \!+ \frac{3m}{n^{1/3}}\cE^{t-1}  + \frac{mn^{2/3}K}{72L^2}\left(\cG_u^{t-1} + \hat \cG_{\vv}^{t-1}\right)+ \frac{m^2}{72L^2}\Big(\frac{\sigma_u^2}{m} + \frac{m\sigma_v^2}{n}\Big).
\end{aligned}
\end{equation}
From Lemma \ref{lem-scaffold-e} and $\eta_u \ge \sqrt{m},\ \eta_v \ge \sqrt{\frac{n}{m}}$, we have
\begin{align*}
    \cE^t
    &\le 96L^2K^2\gamma_u^2\cC^t + 24nK^3\gamma_u^2\cG_u^t + 8nK^3\gamma_v^2\cG_{\vv}^t +52nK^2(\sigma_u^2\gamma_u^2+\sigma_v^2\gamma_v^2)\\
    &\le  \frac{96L^2K^2\gamma^2}{\eta_u^2}\cC^t + 24nK^3\gamma^2\Big(\frac{1}{\eta_u^2}\cG_u^t + \frac{1}{\eta_v^2}\cG_{\vv}^t\Big) +52nK^2\gamma^2\Big(\frac{\sigma_u^2}{\eta_u^2}+\frac{\sigma_v^2}{\eta_v^2}\Big)\\
    &\le \frac{1}{54n^{2/3}}\cC^t + \frac{nK}{216L^2}\left(\cG_u^t + \hat \cG_{\vv}^t\right) + \frac{mn^{1/3}}{72L^2}\Big(\frac{\sigma_u^2}{m}+\frac{m\sigma_v^2}{n}\Big),
\end{align*}
The final inequality derives from $72LK\gamma \le \min(\frac{m}{n^{2/3}},1)$. Combining this result with \eqref{ineq-scaffold-c}, we complete the proof.
\end{proof}

\bibliographystyle{siam}
\bibliography{references}

\end{document}